\documentclass{article}

\usepackage[verbose=true,letterpaper]{geometry}
\AtBeginDocument{
  \newgeometry{
    textheight=9in,
    textwidth=5.5in,
    top=1in,
    headheight=12pt,
    headsep=25pt,
    footskip=30pt
  }
}
\widowpenalty=10000
\clubpenalty=10000
\flushbottom
\sloppy

% float placement

\usepackage[utf8]{inputenc} % allow utf-8 input
\usepackage[T1]{fontenc}    % use 8-bit T1 fonts
\usepackage{hyperref}       % hyperlinks
\usepackage{url}            % simple URL typesetting
\usepackage{booktabs}       % professional-quality tables
\usepackage{amsfonts}       % blackboard math symbols
\usepackage{nicefrac}       % compact symbols for 1/2, etc.
\usepackage{microtype}      % microtypography
\usepackage{xcolor}         % colors
\usepackage{amsmath}
\usepackage{amssymb}
\usepackage{cleveref}
\usepackage{xcolor}
\usepackage{graphicx}
\usepackage{tikz}
\usetikzlibrary{positioning,shadows,fit,decorations.pathmorphing,matrix}
\usepackage{wrapfig}
\usepackage[noend]{algorithmic}
\usepackage[ruled,noend]{algorithm2e}
\usepackage{etoc}

\definecolor{linkcolor}{RGB}{83,83,182}
\definecolor{citecolor}{RGB}{128,0,128}
\hypersetup{
    colorlinks=true,
    citecolor=citecolor,
    linkcolor=linkcolor,
    urlcolor=linkcolor
}

\newcommand{\fix}{\mathrm{fix}}
\newcommand{\fixj}{\mathrm{fix}(\mathcal{J})}
\newcommand{\op}{\mathrm{op}}

\newcommand{\conv}{\mathrm{conv}}
\newcommand{\dist}{\mathrm{dist}}
\newcommand{\gap}{\mathrm{gap}}
\newcommand{\prox}{\mathrm{prox}}

\newcommand{\J}{\mathcal{J}}

\newcommand{\Xc}{\mathcal{X}}
\newcommand{\Zc}{\mathcal{Z}}
\newcommand{\Ac}{\mathcal{A}}
\newcommand{\Bc}{\mathcal{B}}
\newcommand{\Yc}{\mathcal{Y}}

\newcommand{\Cc}{\mathcal{C}}

\newcommand{\Lc}{\mathcal{L}}
\newcommand{\Jc}{\mathcal{J}}

\newcommand{\BB}{\mathbb{B}}
\newcommand{\RR}{\mathbb{R}}
\newcommand{\NN}{\mathbb{N}}

\newcommand{\partialc}{\partial^c}

\newcommand{\imp}{\mbox{\scriptsize \rm imp}}
\newcommand{\jimp}{J_{\bar x}^{\imp}}
\newcommand{\alg}{\mbox{\scriptsize \rm pb}}
\newcommand{\jalg}{J_{\bar x}^{\alg}}
\newcommand{\jac}{\mathrm{Jac}\,}

\newcommand{\iidsim}{\overset{\text{i.i.d}}{\sim}}

\newtheorem{theorem}{Theorem}
\newtheorem{lemma}{Lemma}
\newtheorem{proposition}{Proposition}
\newtheorem{corollary}{Corollary}

\newtheorem{definition}{Definition}

\newtheorem{assumption}{Assumption}
\newtheorem{remark}{Remark}

\newtheorem{example}{Example}
\newenvironment{proof}[1][]{\noindent {\bf Proof #1:\;}}{\hfill $\Box$}

\usepackage{enumitem}
\newlist{compactitem}{itemize}{3}
\setlist[compactitem]{topsep=0pt,partopsep=0pt,itemsep=0pt,parsep=0pt}
\setlist[compactitem,1]{label=---}
\setlist[compactitem,2]{label=\textbullet}
\setlist[compactitem,3]{label=*}

\setlength{\parindent}{0pt}

\title{Automatic differentiation of\\ nonsmooth iterative algorithms}

\author{%
  Jérôme Bolte\footnote{%
  Toulouse School of Economics, %
  University of Toulouse Capitole.}
\and
  Edouard Pauwels\footnote{%
  IRIT, CNRS, %
  Université de Toulouse, ANITI, Toulouse, France.}
\and
  Samuel Vaiter\footnote{%
  CNRS \& Laboratoire J. A. Dieudonné, %
  Université Côte d'Azur, Nice, France.}
}

\newif\ifdedic
\dedictrue

\begin{document}
\etocdepthtag.toc{mtsection}
\etocsettagdepth{mtsection}{subsection}
\etocsettagdepth{mtappendix}{none}

\maketitle

\begin{abstract}
Differentiation along algorithms, i.e., piggyback propagation of derivatives, is now routinely used to differentiate iterative solvers in differentiable programming. Asymptotics is well understood for many smooth problems but the nondifferentiable case is hardly considered. Is there a limiting object for nonsmooth piggyback automatic differentiation (AD)? Does it have any variational meaning and can it be used effectively in machine learning? Is there a connection with classical derivative? All these questions are addressed under appropriate nonexpansivity conditions in the framework of conservative derivatives which has proved useful in understanding nonsmooth AD. For nonsmooth piggyback iterations, we characterize the attractor set of nonsmooth piggyback iterations as a set-valued fixed point which remains in the conservative framework. This has various consequences and in particular almost everywhere convergence of classical derivatives. Our results are illustrated on parametric convex optimization problems with forward-backward, Douglas-Rachford and Alternating Direction of Multiplier algorithms as well as the Heavy-Ball method.
\end{abstract}

\ifdedic
\begin{quote}
    \emph{%
    Dedicated to the memory of Andreas Griewank -- a pioneer in automatic differentiation and optimization -- who passed away on September 2021.%
    }
\end{quote}
\fi

\section{Introduction}
\begin{wrapfigure}{r}{0.3\textwidth}
  \vspace{-20pt}
  \centering
  \begin{tikzpicture}
    \draw (0,0)  node(f)   {$x_{k}(\theta)$};
		\draw (3.1,0)  node(df)  {$J_{x_k}(\theta)$};
		\draw (0,-1.7) node(fun) {$\bar x(\theta)$};
		\draw (3.1,-1.7) node(dfun) {$\jalg(\theta)$};

		\draw[->] (f) -- (df) node[above,midway]{\footnotesize nonsmooth} node[below,midway]{\footnotesize autodiff};
    \draw[->] (f) -- (fun) node[rotate=90,below,midway] {\tiny $k \to +\infty$};
    \draw[magenta,thick,densely dotted,->,decorate,decoration={snake,amplitude=.4mm,segment length=2mm,post length=1mm,pre length=2mm}] (fun) -- (dfun) node[below,midway]{\footnotesize \color{magenta} derivative?};
		\draw[magenta,thick,densely dotted,->,decorate,decoration={snake,amplitude=.4mm,segment length=2mm,post length=1mm,pre length=2mm}] (df) -- (dfun) node[rotate=90,below,midway] {\footnotesize \color{magenta} limit?}  ;
  \end{tikzpicture}
	\caption{We study existence and meaning of $\jalg$ as a derivative of $\bar{x}$, compatible with automatic differentiation of the iterates $(x_k(\theta))_{k\in \NN}$.}
  \label{fig:ad}
\end{wrapfigure}
\paragraph{Differentiable programming.}
We consider a Lipschitz function $F \colon \RR^p \times \RR^m \mapsto \RR^p$, representing an iterative algorithm, parameterized by $\theta \in \RR^m$, with Lipschitz initialization $x_0\colon \theta \mapsto x_0(\theta)$ and
\begin{equation}\label{eq:iterative}
    x_{k+1} (\theta) = F(x_k(\theta), \theta )
    = F_\theta(x_k(\theta)) ,
\end{equation}
where $F_\theta := F(\cdot, \theta)$, under the assumption that $x_k(\theta)$ converges to the unique fixed point of $F_\theta$: $\bar{x}(\theta) = \fix(F_{\theta})$.
Such recursion represent for instance algorithms to solve an optimization problem $\min_{x} h(x)$ (e.g. empirical risk minimization), such as gradient descent: $F(x,\theta) = x - \theta \nabla h(x)$.
But~\eqref{eq:iterative} could also be a fixed-point equation such as a deep equilibrium network~\cite{bai2019deep}.

In the last years, a paradigm shift occurred: such algorithms are now implemented in algorithmic differentiation (AD)-friendly frameworks such as Tensorflow~\cite{tensorflow2015-whitepaper}, PyTorch~\cite{paszke2019pytorch} or JAX~\cite{jax2018github} to name a few.
Assuming that $F$ is differentiable, it is possible to compute iteratively the derivatives of $x_{k+1}$ with respect to $\theta$ using the differential calculus chain rule resulting in so called ``piggyback'' recursion:
\begin{equation}\label{eq:ad-smooth}
  \frac{\partial}{\partial \theta} x_{k+1} (\theta)
  =
  \partial_{1} F(x_k(\theta), \theta )  \cdot \frac{\partial}{\partial \theta} x_{k} (\theta)
  + \partial_{2} F(x_k(\theta), \theta ) ,
\end{equation}
where $\frac{\partial}{\partial \theta} x_k$ is the Jacobian of $x_k$ with respect to $\theta$.
In practice, automatic differentiation frameworks do not compute the full Jacobian, but compute either vector-Jacobian products in reverse-mode (or backpropagation)~\cite{rumelhart1986learningrepresentations} or Jacobian-vector products in forward mode~\cite{wengert1964simpleautomatic}.
We rather consider the full Jacobian, and therefore, our findings \emph{apply to both} modes.
We focus on two issues arising with nonsmooth recursions, illustrated in \Cref{fig:ad}. \textit{(i)} what can be said about the chain rule~\eqref{eq:ad-smooth} and its asymptotics when the function $F$ is not smooth (for example a projected gradient step)? \textit{(ii)} how to interpret its asymptotics as a notion of derivative for $\bar{x}$, the fixed point of $F_{\theta}$? We propose a \emph{joint} answer to both questions,  providing a solid theoretical ground to the idea of algorithmic differentiation of numerical solvers involving nonsmooth components in a differentiable programming context.

\paragraph{Related works.}
Algorithmic use of the chain rule~\eqref{eq:ad-smooth} to differentiate programs takes its root in~\cite{wengert1964simpleautomatic}, where forward differentiation was first proposed, and later in reverse mode~\cite{linnainmaa1970representationcumulative}.
Along with the practical development of automatic differentiation, the question on how to prove the convergence of the iterative sequence~\eqref{eq:ad-smooth} was investigated, notably in the optimization community as reviewed in~\cite{griewank2003piggyback}.
This is an important paper containing several ideas in differentiable programming rediscovered/reused later: implicit differentiation \cite{pedregosa2016hyperparameter,rajeswaran2019meta} and its stability~\cite{blondel2022efficient}, adjoint fixed point iteration~\cite{bai2019deep} that is a key aspect of the deep equilibrium network and linear convergence of~\eqref{eq:ad-smooth} as discussed below.
Notably, the linear convergence of the Jacobians was investigated in \cite{gilbert1992automatic,griewank1993derivative} for the forward mode and in~\cite[Theorem 2.3]{christianson1994reverse} for the reverse mode.
This was more recently investigated -- for \emph{$C^{2}$ functions} -- in the imaging community for primal-dual algorithms~\cite{chambolle2021learning,bogensperger2022convergence} and in the machine learning community for gradient descent~\cite{mehmood2020automatic,lorraine2020optimizing} and the Heavy-ball~\cite{mehmood2020automatic} method.
Note that in the specific context where $F$ solves a $\min$-$\min$ problem, the authors in~\cite{ablin2020super} proved the linear convergence of the Jacobians.
The use of automatic differentiaton for nonsmooth functions was justified by the development of the notion of \emph{conservative Jacobians}~\cite{bolte2020conservative,bolte2020mathematical} with a nonsmooth version of the chain rule for compositional models.
The correctness of automatic differentiation was also investigated in~\cite{lee2020oncorrectness} for a large class of functions that are piecewise analytic, and also in~\cite{kakade2018provably} where a qualification condition is used to compute a Clarke Jacobian.
Along with automatic differentiation, a natural way to differentiate a model such as~\eqref{eq:iterative} is by implicit differentiation, recently applied in several works~\cite{bai2019deep,agrawal2019differentiable,elghaoui2021implicit}.
To study these models with nonsmooth functions, an implicit function theorem~\cite{bolte2021nonsmooth} was proved for path-differentiable functions.

\begin{figure}[t]
  \centering
  \includegraphics[width=\textwidth]{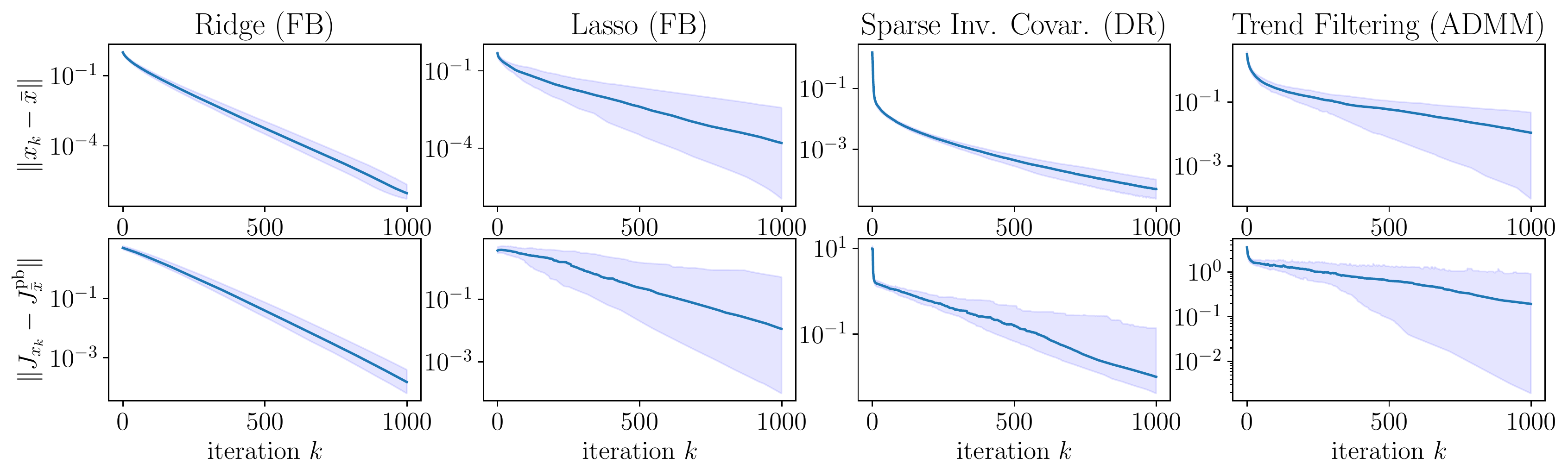}
	\caption{Illustration of the linear convergence of proximal splitting methods. \emph{(First line)} Distance of the iterates to the fixed point. \emph{(Second line)} Distance of the piggyback Jacobians to the Jacobian of the fixed point. The acronyms are FB for Forward-Backward, DR for Douglas-Rachford and ADMM for Alternating Direction Method of Multipliers. In all cases, despite nonsmoothness, piggyback Jacobians converge, illustrating Corollary \ref{cor:classicalJacobians}. Blue lines represent the median of 100 repetitions with random data, and the blue shaded area represents the first and last deciles.}
  \label{fig:linearCvgt}
\end{figure}

\paragraph{Contributions:}{$ $} Under suitable nonexpansivity assumptions, our contributions are as follows.\\
\textbullet\ We address both questions illustrated in Figure \ref{fig:ad} for nonsmooth recursions: set-valued nonsmooth extensions of the piggyback recursion \eqref{eq:ad-smooth} have a well defined limit, described as the fixed point of subset map (\Cref{th:convergenceRecursion}), it is conservative for the fixed point map $\bar{x}$. This is a nonsmooth ``infinite'' chain rule for AD (\Cref{thm:jalgconvervative}).\\
\textbullet\ For almost all $\theta$, despite nonsmoothness, recursion \eqref{eq:ad-smooth} is well defined using the classical Jacobian and converges to the classical Jacobian of the fixed point $\bar{x}$ (\Cref{cor:classicalJacobians}). This has implications for both forward and reverse modes of AD.\\
\textbullet\ For a large class of functions (Lipschitz-gradient selection), it is possible to give a quantitative rate estimate (\Cref{cor:lipconv}), namely to prove linear convergence of the derivatives.\\
\textbullet\ We show that these results can be applied to proximal splitting algorithms in nonsmooth convex optimization. We include forward--backward~(\Cref{prop:fb}), as well Douglas--Rachford (\Cref{prop:DR}) and ADMM, a numerical illustration of the convergence of derivatives is given in \Cref{fig:linearCvgt}.\\
\textbullet\ We also illustrate that, contrarily to the smooth case, nonsmooth piggy back derivatives of momentum methods such as the Heavy-ball algorithm, may diverge even if the iterates converge linearly (\Cref{prop:hb-fail}).

\paragraph{Notations.}
A function $f: \RR^{p} \to \RR^{m}$ is locally Lipschtiz if, for each $x \in \RR^{n}$, there exists a neighborhood of $x$ on which $f$ is Lipschitz.
Denoting by $R \subseteq \RR^{p}$, the full measure set where $f$ is differentiable, the Clarke Jacobian~\cite{clarke1983optimization} at $x \in \RR^{p}$ is defined as
\begin{equation}
  \jac^c f(x) = \conv
  \left\{
					M \in \RR^{m \times p}, \exists (y_{k})_{k\geq 0} \text{ s.t. } \lim_{k \to \infty} y_{k} = x, y_{k} \in R, \lim_{k \to \infty} \frac{\partial f}{\partial y} (y_{k}) = M
  \right\} .
	\label{eq:clarkeJacobian}
\end{equation}
The Clarke subdifferential, $\partial^c f$ is defined similarly.
Given two matrices $A, B$ with compatible dimension, $[A,B]$ is their concatenation.
For a set $\Xc$, $\conv \Xc$ is its convex hull (the smallest convex set containing $\Xc$). The symbol $\mathbb{B}$ denotes a unit ball, the corresponding norm should be clear from the context.

\section{Nonsmooth piggyback differentiation}
We first show how the use of the notion of \emph{conservative Jacobians} allow us to justify rigorously the existence of a nonsmooth equivalent of piggyback iterations in~\eqref{eq:ad-smooth} that is compatible with AD.

\paragraph{Conservative Jacobians.}
Conservative Jacobians were introduced in~\cite{bolte2020conservative} as a generalization of derivatives to study automatic differentiation of nonsmooth functions.
Given a locally Lipschitz continuous function $f: \RR^{p} \to \RR^{m}$, we say that the set-valued $J: \RR^{p} \rightrightarrows \RR^{p \times m}$ is a \emph{conservative Jacobian} for the \emph{path differentiable} $f$ if $J$ has a closed graph, is locally bounded and nowhere empty with
\begin{equation}
  \label{eq:conservative_def}
  \frac{d}{dt} f(\gamma(t))
  =
  J(\gamma(t))\dot \gamma(t) \quad \text{a.e.}
\end{equation}
for any $\gamma \colon [0,1] \to \RR^p$ absolutely continuous with respect to the Lebesgue measure. Conservative gradients are defined similarly.
We refer to~\cite{bolte2020conservative} for extensive examples and properties of this class of function.
Let us mention that the classes convex functions, definable functions, or semialgebraic functions are contained in the set of path differentiable functions.
Given $D_f \colon \RR^p \rightrightarrows \RR^p$, a conservative gradient for $f \colon \RR^p \to \RR$, we have:

\noindent\textbullet\ (\textbf{Clarke subgradient}), for all $x \in \RR^p$, $\partialc f(x) \subset \conv(D_f(x))$.\\
\textbullet\ (\textbf{Gradient almost everywhere}) $D_{f}(x) = \{ \nabla f(x) \}$ for almost all $x \in \RR^p$.\\
\textbullet\ (\textbf{Calculus}) differential calculus rules preserve conservativity, \textit{e.g.} sum and compositions of conservative Jacobians are conservative Jacobians.

An important point is that $D_{f}$ can be used as a first order optimization oracle for methods of gradient type, while preserving usual convergence guaranties~\cite{bolte2020mathematical}.

\paragraph{Piggyback differentiation of recursive algorithms.} The following is standing throughout the text

\begin{assumption}[The conservative Jacobian of the iteration mapping is a contraction]{\rm
    $F$ is locally Lipschitz, path differentiable, jointly in $(x,\theta)$, and $J_{F}$ is a conservative Jacobian for $F$.
  There exists $0 \leq \rho < 1$, such that for any $(x, \theta) \in \RR^{p} \times \RR^{m}$ and any pair $[A,B] \in J_{F}(x,\theta)$, with $A \in \RR^{p \times p}$ and $B \in \RR^{p \times m}$, the operator norm of $A$ is at most $\rho$.
	\label{ass:contraction}}
\end{assumption}
Under Assumption \ref{ass:contraction}, $F_{\theta}$ is a strict contraction so that $(x_k(\theta))_{k \in \NN}$ converges linearly to $\bar{x}(\theta) = \fix(F_{\theta})$ the unique fixed point of the iteration mapping $F_{\theta}$. More precisely, for all $k \in \NN$, we have
\begin{equation*}
    \|x_k - \bar{x}(\theta)\| \leq \rho^k \frac{\|x_0 - F_{\theta}(x_0)\|}{1 - \rho}.
\end{equation*}
Furthermore, for every $k \in \NN$, let us define the following set-valued piggyback recursion:
\begin{align}\label{eq:autodiff}\tag{PB}
  J_{x_{k+1}}(\theta) &= \left\{A J + B,\,  [A, B] \in J_{F}(x_k(\theta),\theta),\,  J \in J_{x_{k}}(\theta) \right\}.
\end{align}
We will show in \Cref{sec:infinite} that~\eqref{eq:autodiff} plays the same role as~\eqref{eq:ad-smooth} in the nonsmooth setting.
Note that one can recursively evaluates a sequence $J_k \in J_{x_k}$, $ k \in \NN$ as follows
\begin{align}
				J_{k+1} &= A_k J_k + B_k \quad \text{where} \quad [A_k, B_k] \in J_{F}(x_k(\theta),\theta),
  \label{eq:autodiffSection}
\end{align}
which corresponds to the operations actually implemented in nonsmooth AD frameworks.

\begin{remark}[Local contractions]{\rm
				Assumption \ref{ass:contraction} may be relaxed locally as follows: for all $\theta$, the fixed point set $\fix(F_\theta)$
				of the iteration mapping $F_\theta$ is a singleton $\bar{x}_\theta$ such that $x_k(\theta) \to \bar{x}(\theta)$ as $k \to \infty$, and the operator norm condition on $J_F$ in Assumption \ref{ass:contraction}  holds at the point $(\bar{x}(\theta),\theta)$. By graph closedness of $J_F$, in a  neighborhood of $(\bar{x}(\theta), \theta)$, $F_\theta$ is a strict contraction and  the operator norm condition on $J_F$ holds, possibly with a larger contraction factor $\rho$. After finitely many steps, the iterates $(x_k)_{k \in \NN}$ remain in this neighborhood and all our convergence results hold, due to their asymptotic nature.
				\label{rem:local}}
\end{remark}

\section{Asymptotics of nonsmooth piggyback differentiation}\label{sec:infinite}
\subsection{Fixed point of affine iterations}
\paragraph{Gap and Haussdorf distance.}
Being given two nonempty compact subsets $\Xc,\Yc$ of $\RR^p$,  set
\begin{equation*}
  \gap(\Xc,\Yc)=\max_{x\in \Xc} d(x, \Yc)
  \quad \text{where} \quad
  d(x, \Yc) = \min_{y\in \Yc} \|x-y\| ,
\end{equation*}
and define the Hausdorff distance between $\Xc$ and $\Yc$ by
\begin{equation*}
  \dist(\Xc,\Yc)=\max (\gap(\Xc,\Yc),\gap(\Yc,\Xc)).
\end{equation*}
Note that $\gap(\Xc,\Yc) = 0$ if, and only if, $\Xc \subseteq \Yc$, whereas $\dist(\Xc,\Yc) = 0$ if, and only if, $\Xc = \Yc$.
Moreover, $\Xc \subseteq \Yc + \gap(\Xc, \Yc) \mathbb{B}$ where $\mathbb{B}$ is the unit ball.
It means that $\gap(\Xc,\Yc)$ ``measures'' the default of inclusion of $\Xc$ in $\Yc$, see~\cite[Chapter 4]{rockafellar2009variational} for more details.

\paragraph{Affine iterations by packets of matrices.}
Let $\Jc \subset \RR^{p\times (p + m)}$ be a compact subset of matrices such that any matrix of the form $[A,B] \in \Jc$ with $A \in \RR^{p \times p}$ is such that $A$ has operator norm at most $\rho < 1$. We let $\Jc$ act naturally on the matrices of size $p \times m$ as follows $\Jc \colon \RR^{p\times m} \rightrightarrows \RR^{p\times m}$ the function from $\RR^{p \times m}$ to subsets of $\RR^{p \times m}$ which is defined for each $X \in \RR^{p \times n}$ as follows: $\Jc(X) = \{AX + B,\, [A,B] \in \Jc\}$. This  defines a set-valued map through, for any $\Xc \subset \RR^{p\times m}$,
\begin{align}\label{eq:actionJMatrix}
  \Jc(\Xc) = \{A X + B,\, [A,B] \in J,\, X \in \Xc\}.
\end{align}
On the model of recursions of the form~\eqref{eq:autodiff}, we consider sequences $(\Xc_k)_{k \in \NN}$ of subsets of $\RR^{p\times m}$ satisfying the recursion
\begin{equation}\label{eq:aff_iter}
  \Xc_{k+1} = \Jc(\Xc_k).
\end{equation}
We have the following instance of the Banach--Picard theorem (proved in \Cref{app:prop-aff-iter}).
\begin{theorem}[Set-valued affine contractions]\label{th:convergenceRecursion}
  Let $\Jc \subset \RR^{p\times (p+m)}$ be a compact subset of matrices as above with $\rho <1$. Then there is a unique nonempty compact set $\fixj \subset \RR^{p \times m}$ satisfying $\fixj = \Jc(\fixj)$, where the action of $\Jc$ is given in \eqref{eq:actionJMatrix}.

  Let $(\Xc_k)_{k \in \NN}$ be a sequence of  compact subsets of $\RR^{p \times m}$, such that $\Xc_0\neq\emptyset$, and satisfying the recursion~\eqref{eq:aff_iter}.
  We have for all $k \in \NN$
  \begin{align*}
    \dist(\Xc_k, \fixj)& \leq \rho^{k} \frac{\dist(\Xc_0, \Jc(\Xc_0))}{1 - \rho},
  \end{align*}
  where $\dist$ is the Hausdorff distance related to the Euclidean norm on $p \times m$ matrices.
\end{theorem}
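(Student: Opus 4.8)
The plan is to recognize the recursion~\eqref{eq:aff_iter} as a Banach--Picard iteration on the complete metric space of nonempty compact subsets of $\RR^{p\times m}$ endowed with the Hausdorff distance, and to verify that the set-valued map $\Xc \mapsto \Jc(\Xc)$ is a genuine $\rho$-contraction on this space; the result is then immediate from the contraction mapping principle, including the a priori rate.

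First I would set up the ambient space. Let $\mathcal{K}$ denote the collection of nonempty compact subsets of $\RR^{p\times m}$; equipped with $\dist$ it is a complete metric space (this is classical, see~\cite[Chapter 4]{rockafellar2009variational}). I would then check that $\Jc$ maps $\mathcal{K}$ into itself: for $\Xc\in\mathcal{K}$, the set $\Jc(\Xc)$ of~\eqref{eq:actionJMatrix} is the image of the compact set $\Jc\times\Xc$ under the continuous map $([A,B],X)\mapsto AX+B$, hence compact, and it is obviously nonempty. In particular, since every element of $\mathcal{K}$ is nonempty, any fixed point obtained below is automatically nonempty, so no separate nonemptiness argument is needed.

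Next comes the contraction estimate. Fix $\Xc,\Yc\in\mathcal{K}$ and take any $Z\in\Jc(\Xc)$, say $Z=AX+B$ with $[A,B]\in\Jc$ and $X\in\Xc$. Pick $Y\in\Yc$ realizing $\|X-Y\|=d(X,\Yc)\le\gap(\Xc,\Yc)\le\dist(\Xc,\Yc)$, and set $Z'=AY+B\in\Jc(\Yc)$. Using the operator-norm bound $\|A\|_{\op}\le\rho$ we get $\|Z-Z'\|=\|A(X-Y)\|\le\rho\,\dist(\Xc,\Yc)$, hence $d(Z,\Jc(\Yc))\le\rho\,\dist(\Xc,\Yc)$. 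Taking the maximum over $Z\in\Jc(\Xc)$ yields $\gap(\Jc(\Xc),\Jc(\Yc))\le\rho\,\dist(\Xc,\Yc)$, and by symmetry $\dist(\Jc(\Xc),\Jc(\Yc))\le\rho\,\dist(\Xc,\Yc)$.

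Finally I would invoke the Banach fixed point theorem on $(\mathcal{K},\dist)$: $\Jc$ admits a unique fixed point $\fixj\in\mathcal{K}$, i.e.\ a unique nonempty compact set with $\fixj=\Jc(\fixj)$, and every orbit $(\Xc_k)_{k\in\NN}$ with $\Xc_0\in\mathcal{K}$ converges to it. The quantitative bound is the standard geometric-series estimate: $\dist(\Xc_{j+1},\Xc_j)\le\rho^j\dist(\Xc_1,\Xc_0)$ by iterating the contraction property, so summing over $j\ge k$ and letting the number of steps tend to infinity gives $\dist(\Xc_k,\fixj)\le\frac{\rho^k}{1-\rho}\dist(\Xc_0,\Xc_1)=\frac{\rho^k}{1-\rho}\dist(\Xc_0,\Jc(\Xc_0))$, which is exactly the claimed inequality. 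The only mildly delicate ingredient is the completeness of the hyperspace $(\mathcal{K},\dist)$, which I would simply cite; the rest is a direct verification once the contraction estimate above is in hand.
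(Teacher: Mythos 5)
Your proof is correct, and its skeleton is the same as the paper's: both recognize \eqref{eq:aff_iter} as a Banach--Picard iteration on a hyperspace of compact sets with the Hausdorff metric and read off the fixed point and the geometric rate from the contraction principle. The execution differs in two respects, both to your advantage in terms of economy. First, you establish the contraction estimate $\dist(\Jc(\Xc),\Jc(\Yc))\le\rho\,\dist(\Xc,\Yc)$ by a direct pointwise selection ($Z=AX+B\mapsto Z'=AY+B$ with $Y$ realizing $d(X,\Yc)$), whereas the paper routes this through a separate lemma stating that $\Xc\subset\Yc+\Zc$ and $\Yc\subset\Xc+\Zc$ imply $\dist(\Xc,\Yc)\le\|\Zc\|_{\sup}$, combined with the chain of inclusions $\Jc(\Xc)\subset\Jc(\Yc)+\rho\,\dist(\Xc,\Yc)\BB$. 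Second, you apply the fixed point theorem on the full hyperspace of nonempty compact subsets of $\RR^{p\times m}$, whose completeness you cite; the paper instead first proves that the orbit $(\Xc_k)$ stays in a bounded region (via the estimate $\|\Xc_{k+1}\|_{\sup}\le\rho\|\Xc_k\|_{\sup}+\|\Bc\|_{\sup}$), restricts to a compact ambient set, and invokes completeness of the space of closed subsets of a compact metric space, which then requires an extra remark to show the fixed point does not depend on the chosen ambient compact. Your version avoids that detour entirely; the one ingredient you must genuinely justify (and correctly flag) is completeness of the hyperspace of nonempty compacts of a complete space, which is classical. The only micro-step worth writing out explicitly is the inequality $\|A(X-Y)\|\le\|A\|_{\op}\|X-Y\|$ for the Frobenius norm on $p\times m$ matrices, which holds column by column.
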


\subsection{An infinite chain rule and its consequences}
Define the following set-valued map based on the $\fix$ operator from \Cref{th:convergenceRecursion},
\begin{align*}
    \jalg \colon \theta \rightrightarrows \fix\left[
    J_F(\bar{x}(\theta), \theta)\right].
\end{align*}
where $\bar{x}(\theta)$ is the unique fixed point of the algorithmic recursion.
Note that since $\bar x(\theta)=\fix (F_\theta)$, we also have equivalently that $\jalg$ is the fixed-point of the Jacobian of the fixed-point of $F_{\theta}$:
\begin{align*}
    \jalg \colon \theta \rightrightarrows \fix\left[
    J_F(\fix(F_\theta), \theta)\right] .
\end{align*}
We have the following (proved in \Cref{app:cons-jac}).
\begin{theorem}[A conservative mapping for the fixed point map]\label{thm:jalgconvervative}
	Under \Cref{ass:contraction},
  $\jalg$ is well-defined, and is a conservative Jacobian for the fixed point map $\bar{x}$.
\end{theorem}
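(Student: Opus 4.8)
The plan is to establish the two assertions of the theorem separately: well-definedness of $\jalg$, and the conservativity property. For well-definedness, fix $\theta \in \RR^m$. Since $\bar x(\theta) = \fix(F_\theta)$ is a singleton under Assumption~\ref{ass:contraction}, the set $\J_F(\bar x(\theta), \theta)$ is a well-defined compact subset of $\RR^{p\times(p+m)}$ (by local boundedness and closed graph of $\J_F$), and every $[A,B]$ in it has $\|A\|_{\op} \le \rho < 1$ by the contraction assumption. Therefore $\J := \J_F(\bar x(\theta),\theta)$ satisfies the hypotheses of \Cref{th:convergenceRecursion}, which immediately yields that $\fix[\J_F(\bar x(\theta),\theta)]$ is a unique nonempty compact subset of $\RR^{p\times m}$. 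So $\jalg(\theta)$ is a well-defined nonempty compact set for every $\theta$.

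Next I would verify the structural requirements of a conservative Jacobian: closed graph, local boundedness, nonempty values. Nonemptiness is already done. Local boundedness should follow because $\theta \mapsto (\bar x(\theta),\theta)$ is locally Lipschitz (the fixed point map $\bar x$ is locally Lipschitz as a standard consequence of the uniform contraction, or one can cite the nonsmooth implicit function theorem of~\cite{bolte2021nonsmooth}), $\J_F$ is locally bounded, and the fixed-point set $\fix[\J]$ lives inside the ball of radius $\sup\{\|B\|\} / (1-\rho)$ — a bound that depends only locally on $\theta$ via the local bound on $\J_F$. For closed graph: take $\theta_k \to \theta$ and $M_k \in \jalg(\theta_k)$ with $M_k \to M$; I want $M \in \jalg(\theta)$. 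Here I would use the quantitative estimate from \Cref{th:convergenceRecursion}: $\jalg(\theta_k) = \fix[\J_F(\bar x(\theta_k),\theta_k)]$, and since $\J_F$ has closed graph and $\bar x$ is continuous, the packet $\J_F(\bar x(\theta_k),\theta_k)$ converges (in the appropriate upper-Kuratowski sense, or sandwiched in Hausdorff distance on a compact bounding set) to something inside $\J_F(\bar x(\theta),\theta)$; a Hausdorff-continuity-of-the-fixed-point argument — analogous to the contraction estimate but now perturbing $\J$ rather than $\Xc_0$ — then shows $M \in \fix[\J_F(\bar x(\theta),\theta)] = \jalg(\theta)$.

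The substantive part is the conservativity identity~\eqref{eq:conservative_def}: for any absolutely continuous $\gamma\colon[0,1]\to\RR^m$, $\frac{d}{dt}\bar x(\gamma(t)) \in \jalg(\gamma(t))\dot\gamma(t)$ for a.e.\ $t$. My approach is to pass to the limit in the finite piggyback recursion. For each $k$, the composition $\theta \mapsto x_k(\theta)$ is path differentiable with a conservative Jacobian $J_{x_k}$ obtained from~\eqref{eq:autodiff} by the calculus rules (composition/sum preserve conservativity). Along $\gamma$, we have $\frac{d}{dt} x_k(\gamma(t)) \in J_{x_k}(\gamma(t))\dot\gamma(t)$ a.e. Now $x_k(\gamma(t)) \to \bar x(\gamma(t))$ uniformly in $t$ (by the $\rho^k$ bound), and I must show the right-hand sides converge in the right sense to $\jalg(\gamma(t))\dot\gamma(t)$. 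The key is: (i) $J_{x_k}(\theta)$ is exactly the $k$-th iterate $\Xc_k$ of recursion~\eqref{eq:aff_iter} with $\Jc = \J_F$ evaluated along the trajectory $(x_j(\theta),\theta)$, whereas $\jalg(\theta)$ is the fixed point of the \emph{constant} packet $\J_F(\bar x(\theta),\theta)$; and (ii) since $x_j(\theta) \to \bar x(\theta)$, the "moving" packets converge to the limiting packet, so one needs a perturbed version of \Cref{th:convergenceRecursion} guaranteeing $\dist(J_{x_k}(\theta), \jalg(\theta)) \to 0$, uniformly on compact $\theta$-sets. Granting that, I would combine uniform convergence of $x_k(\gamma(\cdot))$ to $\bar x(\gamma(\cdot))$ — hence of their a.e.\ derivatives along a further argument, or better, integrate the inclusion and pass to the limit under the integral sign using dominated convergence (the derivatives are bounded by a Lipschitz constant times $|\dot\gamma(t)|$) — together with the closed-graph/outer-semicontinuity of $(\theta,v)\mapsto \jalg(\theta)v$ to conclude $\frac{d}{dt}\bar x(\gamma(t)) \in \jalg(\gamma(t))\dot\gamma(t)$ a.e.

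The main obstacle I anticipate is step~(ii): controlling the fixed-point set of the true limiting packet $\J_F(\bar x(\theta),\theta)$ by the iterates of the moving packets $\J_F(x_k(\theta),\theta)$. The clean estimate in \Cref{th:convergenceRecursion} assumes a \emph{fixed} compact packet $\Jc$; here the packet drifts with $k$ as $x_k \to \bar x$. I expect to handle this by a two-term splitting — compare $\Xc_{k+1}$ generated by the moving packets to $\tilde\Xc_{k+1}$ generated by the frozen packet $\J_F(\bar x(\theta),\theta)$ starting from the same $\Xc_k$, bounding the one-step discrepancy by (Lipschitz modulus of $\J_F$) $\times \|x_k(\theta) - \bar x(\theta)\| \le C\rho^k$, and then summing the geometric series of propagated errors (each surviving step is damped by $\rho$). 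This yields $\dist(J_{x_k}(\theta),\jalg(\theta)) \le C' k \rho^k \to 0$, and local uniformity in $\theta$ follows from local uniformity of all the constants. Once this perturbation lemma is in hand, the rest is assembling the pieces: well-definedness from \Cref{th:convergenceRecursion}, graph-closedness and local boundedness from continuity of $\bar x$ and properties of $\J_F$, and the conservativity identity from the limiting argument above.
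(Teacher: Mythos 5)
Your treatment of well-definedness and of the structural properties (nonempty values, local boundedness, closed graph via Hausdorff continuity of the fixed-set map) matches the paper's Lemma on regularity of $\jalg$ and is fine. The gap is in the conservativity argument, and it is twofold. First, you aim at the inclusion $\frac{d}{dt}\bar x(\gamma(t)) \in \jalg(\gamma(t))\dot\gamma(t)$ a.e., but conservativity as defined in \eqref{eq:conservative_def} is an \emph{equality}: the derivative must equal $M\dot\gamma(t)$ for \emph{every} $M \in \jalg(\gamma(t))$, equivalently the integral identity $\bar x(\gamma(1))-\bar x(\gamma(0))=\int_0^1 l(\gamma(t))\dot\gamma(t)\,dt$ must hold for \emph{every} measurable selection $l$ of $\jalg$. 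Producing one good element per $t$ (which is what a limit of elements of $J_{x_k}(\gamma(t))\dot\gamma(t)$ gives you) is strictly weaker and does not establish conservativity. Second, your moving-packet route only controls the one-sided quantity $\gap(J_{x_k}(\theta),\jalg(\theta))$ — the perturbed-iteration estimate shows $J_{x_k}$ eventually sits inside a neighborhood of $\jalg$, not that it fills $\jalg$ out. So even the integral-identity version of your argument would only cover those selections of $\jalg$ reachable as limits of selections of $J_{x_k}$, and there is no guarantee these exhaust $\jalg$.

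The paper avoids both problems by working with the \emph{frozen} packet $\J_F(\bar x(\theta),\theta)$ rather than the moving packets along the trajectory. It initializes with $L_0=\jimp$, the implicit-differentiation conservative Jacobian of \cite{bolte2021nonsmooth}, and iterates $L_{k+1}(\theta)=\J_F(\bar x(\theta),\theta)(L_k(\theta))$. Because $\bar x(\theta)=F(\bar x(\theta),\theta)$, the chain rule for conservative Jacobians shows by induction that each $L_k$ is \emph{exactly} conservative for $\bar x$, so the integral identity holds for every selection of every $L_k$. Theorem~\ref{th:convergenceRecursion} applied to the frozen packet gives full Hausdorff convergence $\dist(L_k(\theta),\jalg(\theta))\to 0$ (both gaps, uniformly on compacts), so an \emph{arbitrary} measurable selection $l$ of $\jalg$ can be approximated pointwise by measurable selections $l_k$ of $L_k$ (via a measurable argmin); dominated convergence then transfers the integral identity to $l$, and conservativity follows from the selection characterization of \cite[Lemma 8]{marx2022path}. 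The essential ideas missing from your proposal are the use of a known conservative Jacobian ($\jimp$) as the seed of a frozen-point recursion whose every iterate is conservative for $\bar x$, and the quantification over all measurable selections of the limit set.
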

Combining with \Cref{th:convergenceRecursion} ensures the convergence of the set-valued piggyback iterations \eqref{eq:autodiff}.
\begin{corollary}[Convergence of the piggyback derivatives]\label{cor:unrollingConvergence}
	Under \Cref{ass:contraction},
  for all $\theta$, the recursion~\eqref{eq:autodiff} satisfies
  \begin{equation}\label{eq:unroll-cvg-fix}
    \lim_{k \to \infty}  \gap(J_{x_k}(\theta), \jalg(\theta))= 0.
  \end{equation}
\end{corollary}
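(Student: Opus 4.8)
The plan is to identify the abstract recursion~\eqref{eq:aff_iter} with the piggyback recursion~\eqref{eq:autodiff} evaluated \emph{at the fixed point}, then transfer the convergence guarantee of \Cref{th:convergenceRecursion} back to the true piggyback iterates by a perturbation argument that controls the discrepancy between $J_F(x_k(\theta),\theta)$ and $J_F(\bar x(\theta),\theta)$. Fix $\theta$, which we suppress from the notation. Set $\Jc := J_F(\bar x,\theta)$; by \Cref{ass:contraction} this is a compact subset of $\RR^{p\times(p+m)}$ (compactness from the closed graph and local boundedness of the conservative Jacobian) whose ``$A$-blocks'' have operator norm at most $\rho<1$, so \Cref{th:convergenceRecursion} applies and yields $\fixj=\jalg(\theta)$. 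If we let $\Yc_{k+1}=\Jc(\Yc_k)$ with $\Yc_0:=J_{x_0}(\theta)$, then \Cref{th:convergenceRecursion} gives $\dist(\Yc_k,\jalg(\theta))\le \rho^k \frac{\dist(\Yc_0,\Jc(\Yc_0))}{1-\rho}\to 0$; in particular $\gap(\Yc_k,\jalg(\theta))\to 0$.

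Next I would compare $J_{x_k}(\theta)$ (the true piggyback set, driven by $J_F(x_k,\theta)$) with $\Yc_k$ (driven by the frozen $\Jc=J_F(\bar x,\theta)$). The key ingredient is an upper-semicontinuity estimate: since $J_F$ has closed graph and is locally bounded, for every $\varepsilon>0$ there is $k_0$ such that for all $k\ge k_0$ one has $J_F(x_k,\theta)\subseteq \Jc+\varepsilon\BB$ (here using $x_k\to\bar x$ linearly). Combined with a uniform bound $M$ on the norms of elements of $\bigcup_k J_{x_k}(\theta)$ — obtained from the recursion $\|J_{k+1}\|\le \rho\|J_k\|+C$, which keeps the iterates in a fixed ball independent of $k$ — one gets, for any $J\in J_{x_k}(\theta)$ with $J=AJ'+B$, $J'\in J_{x_{k-1}}(\theta)$, $[A,B]\in J_F(x_{k-1},\theta)$, a decomposition $J = A_0 J' + B_0 + E$ with $[A_0,B_0]\in\Jc$ and $\|E\|\le (M+1)\varepsilon$. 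Writing $g_k:=\gap(J_{x_k}(\theta),\jalg(\theta))$ and using that $\jalg(\theta)=\Jc(\jalg(\theta))$, this yields a recursive inequality of the form $g_k\le \rho\, g_{k-1} + (M+1)\varepsilon$ for $k\ge k_0+1$ (the $\rho$ coming from the operator-norm bound on the $A$-blocks of $\Jc$, exactly as in the proof of \Cref{th:convergenceRecursion}). Iterating gives $\limsup_k g_k \le \frac{(M+1)\varepsilon}{1-\rho}$, and since $\varepsilon>0$ was arbitrary, $g_k\to 0$, which is~\eqref{eq:unroll-cvg-fix}.

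The step I expect to be the main obstacle is making the perturbation estimate clean: one must check that the nonemptiness and compactness of $J_{x_k}(\theta)$ propagate along~\eqref{eq:autodiff} (immediate from compactness of $J_F$-images and continuity of the affine action), establish the uniform bound $M$ on $\bigcup_k J_{x_k}(\theta)$, and — most delicately — justify that the approximation $J_F(x_k,\theta)\subseteq \Jc+\varepsilon\BB$ lets one \emph{replace} each selected pair $[A,B]\in J_F(x_{k-1},\theta)$ by a nearby pair in $\Jc$ while only perturbing the resulting matrix by $O(\varepsilon)$ uniformly in the choice of $J'\in J_{x_{k-1}}(\theta)$; this is where the uniform bound $M$ is used. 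Everything else is the Banach--Picard contraction bookkeeping already carried out in \Cref{th:convergenceRecursion}. Note that only $\gap$ (not the full Hausdorff distance) converges here, since nothing forces the reverse inclusion $\jalg(\theta)\subseteq J_{x_k}(\theta)+o(1)\BB$ when $\Xc_0=J_{x_0}(\theta)$ is, e.g., a singleton.
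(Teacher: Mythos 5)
Your proposal is correct, and it follows the same overall strategy as the paper: view the piggyback recursion as a perturbation of the frozen recursion driven by $\Jc = J_F(\bar x(\theta),\theta)$, use graph-closedness and local boundedness of $J_F$ together with $x_k\to\bar x$ to get $J_F(x_k(\theta),\theta)\subset \Jc+\epsilon\BB$ for large $k$, and conclude by a contraction estimate. The execution differs, though. The paper outsources all the work to Proposition~\ref{prop:perturbedConvergence}: it enlarges the packet to $\Jc_\epsilon=\Jc+\epsilon\BB_{pm}$, dominates the true iterates by the $\Jc_\epsilon$-iterates, applies \Cref{th:convergenceRecursion} to $\Jc_\epsilon$, and then controls $\dist(\fix(\Jc_\epsilon),\fix(\Jc))$ via the Lipschitz continuity of the fixed-set mapping (Proposition~\ref{prop:lipschitzContinuity}). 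You instead derive a direct one-step inequality $g_k\le\rho\,g_{k-1}+(M+1)\epsilon$ on the gap itself, exploiting $\jalg(\theta)=\Jc(\jalg(\theta))$ so that for each $J=AJ'+B$ you can match $J'$ to its nearest $L\in\jalg(\theta)$ and push $L$ forward by the nearby pair $[A_0,B_0]\in\Jc$; this is a valid estimate (the $\rho$ bound applies because $[A_0,B_0]\in\Jc$), and your handling of the error term via the uniform bound $M$ is the same ingredient the paper uses. Your route is more self-contained and avoids introducing $\fix(\Jc_\epsilon)$ altogether, at the cost of only giving a qualitative $\limsup$ statement; the paper's machinery (Proposition~\ref{prop:perturbedConvergence} and Corollary~\ref{cor:perturbedConvergence}) is built to also deliver the explicit linear rates needed later for \Cref{cor:lipconv}. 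Your closing remark that only $\gap$, and not the full Hausdorff distance, can be expected to converge is accurate and consistent with the statement.
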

  Unrolling the expression of $J_{x_{k}}$, we can rewrite~\eqref{eq:unroll-cvg-fix} as a set-valued product such that
  \begin{equation*}
    \lim_{K \to +\infty} \gap \left(
      \prod_{k=0}^{K} J_{F}(x_k(\theta), \theta)
      ,
      \jalg(\theta)
    \right) = 0 .
  \end{equation*}
  In plain words, this a limit-derivative exchange result: \emph{Asymptotically, the gap between the automatic differentiation of $x_{k}$ and the derivative of the limit is zero.}
	This implies in particular that the recursion \eqref{eq:autodiffSection} produces bounded sequences and all its accumulation points are in $\jalg$.
	Using the fact that conservative Jacobians equal classical Jacobians almost everywhere \cite{bolte2020conservative}, this implies convergence of derivatives in a classical sense.
\begin{corollary}[Convergence of the classical piggyback derivatives]
				Under \Cref{ass:contraction},
				for almost all $\theta$, the classical Jacobian
				$\frac{\partial}{\partial \theta} x_k(\theta)$, is well defined for all $k$ and converges towards the classical Jacobian of $\bar x$:
  \[ \lim_{k \to \infty}
     \frac{\partial}{\partial \theta} x_k(\theta) = \frac{\partial}{\partial \theta} \bar x(\theta).
  \]
	\label{cor:classicalJacobians}
\end{corollary}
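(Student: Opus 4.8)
The plan is to obtain the statement as a direct consequence of \Cref{thm:jalgconvervative} and \Cref{cor:unrollingConvergence}, together with the basic fact recalled in the excerpt that a conservative Jacobian coincides with the classical Jacobian Lebesgue-almost everywhere.

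\textbf{Step 1: the piggyback map is conservative for each iterate.} First I would observe that each $\theta \mapsto x_k(\theta)$ is locally Lipschitz and path differentiable: this holds for $x_0$ by assumption, and propagates through the recursion $x_{k+1}(\theta)=F(x_k(\theta),\theta)$ since $F$ is locally Lipschitz and path differentiable jointly in $(x,\theta)$, and these classes are stable under composition. Starting from a conservative Jacobian $J_{x_0}$ of $x_0$, the set-valued recursion~\eqref{eq:autodiff} is exactly the chain rule for conservative Jacobians applied to the composition of $F$ with $\theta \mapsto (x_k(\theta),\theta)$, whose conservative Jacobian is obtained by stacking $J_{x_k}(\theta)$ with the identity block. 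By the calculus rules for conservative Jacobians (stability under sums and compositions), an induction on $k$ then shows that $J_{x_k}$ is a conservative Jacobian for $\theta \mapsto x_k(\theta)$, for every $k \in \NN$.

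\textbf{Step 2: almost everywhere reduction to singletons.} Since a conservative Jacobian equals the classical Jacobian almost everywhere, for each $k$ there is a full measure set $\Theta_k \subseteq \RR^m$ on which $x_k$ is differentiable and $J_{x_k}(\theta)=\{\tfrac{\partial}{\partial\theta}x_k(\theta)\}$; by \Cref{thm:jalgconvervative} applied likewise to $\bar x$, there is a full measure set $\bar\Theta$ on which $\bar x$ is differentiable and $\jalg(\theta)=\{\tfrac{\partial}{\partial\theta}\bar x(\theta)\}$. Set $\Theta := \bar\Theta \cap \bigcap_{k \in \NN}\Theta_k$; being a countable intersection of full measure sets, $\Theta$ has full measure, and for $\theta \in \Theta$ the classical Jacobian $\tfrac{\partial}{\partial\theta}x_k(\theta)$ is well defined for every $k$.

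\textbf{Step 3: conclusion.} Fix $\theta \in \Theta$. \Cref{cor:unrollingConvergence} gives $\gap(J_{x_k}(\theta),\jalg(\theta)) \to 0$ as $k \to \infty$. Since on $\Theta$ both $J_{x_k}(\theta)$ and $\jalg(\theta)$ are singletons, this gap is exactly $\big\|\tfrac{\partial}{\partial\theta}x_k(\theta)-\tfrac{\partial}{\partial\theta}\bar x(\theta)\big\|$, hence $\tfrac{\partial}{\partial\theta}x_k(\theta) \to \tfrac{\partial}{\partial\theta}\bar x(\theta)$, which is the claim. The only genuinely delicate point is Step 1, namely checking that the formal set-valued recursion~\eqref{eq:autodiff} really is the conservative chain rule for the composition defining $x_k$ — in particular that stacking $J_{x_k}$ with the identity and composing with $J_F$ is legitimate — so that conservativity is inherited at every iterate. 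Once this is secured, Steps 2 and 3 are routine: a countable union of Lebesgue-null sets combined with the gap convergence already established in \Cref{cor:unrollingConvergence}.
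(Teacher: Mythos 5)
Your proposal is correct and follows essentially the same route as the paper, which disposes of this corollary in one line: combine the gap convergence of \Cref{cor:unrollingConvergence} with the fact that conservative Jacobians (of each $x_k$, by the chain-rule calculus, and of $\bar x$, by \Cref{thm:jalgconvervative}) reduce to the classical Jacobian on a full measure set, then intersect the countably many full measure sets. Your write-up merely makes explicit the details the paper leaves implicit — in particular Step 1, the conservativity of $J_{x_k}$ for $x_k$, which the paper takes for granted from the compositional calculus of \cite{bolte2020conservative} — so there is nothing to add.
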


\begin{remark}[Connection to implicit differentiation]{\rm
  The authors in~\cite{bolte2021nonsmooth} proved a qualification-free version of the implicit function theorem.
  Assuming that for every $[A,B] \in J(\bar x(\theta), \theta)$, the matrix $I - A$ is invertible, we have that
  \begin{align}\label{eq:implicitDiff}
    \jimp\colon \theta
    \rightrightarrows
    \left\{(I - A)^{-1} B, [A,B] \in J_F(\bar{x}(\theta), \theta) \right\}
  \end{align}
	is a conservative Jacobian for $\bar x$. Under \Cref{ass:contraction}, one has $\jimp(\theta) \subset \jalg(\theta)$ for all $\theta$.
  Unfortunately, as soon as $F$ is not differentiable, the inclusion may be strict, see details in \Cref{app:impl-diff}.
  \label{rem:implicitDiff}}
\end{remark}
\subsection{Consequence for algorithmic differentiation}
Given $k \in \NN$, $\dot{\theta} \in \RR^m$, $\bar{w}_k \in \RR^p$, the following algorithms allow us to compute $\dot{x}_k = J_k \dot{\theta}$ using the forward mode of automatic differentation (Jacobian Vector Products, JVP), and $\bar{\theta}_k^T = \bar{w}_k^T J_k$ using the backward mode of automatic differentiation (Vector Jacobian Products, VJP).

\begin{algorithm}[H]
  \caption{Algorithmic differentiation of recursion \eqref{eq:iterative}, forward and reverse modes}
	\label{alg:autodiff0}
	\textbf{Input:} $k \in \NN$, $\theta \in \RR^m$, $\dot{\theta} \in \RR^m$, $\bar{w}_k \in \RR^p$, initialization function $x_0(\theta)$, recursion function $F(x,\theta)$, conservative Jacobians $J_F(x,\theta)$ and $J_{x_0}(\theta)$. Initialize:
    $x_0 = x_0(\theta) \in \RR^p$.
   \begin{minipage}[t]{0.35\linewidth}
   \begin{algorithmic}
    \STATE \textbf{Forward mode (JVP):}
    \STATE
     $\dot{x}_0 = J\dot{\theta}$, $J \in J_{x_0}(\theta)$.
    \FOR{$i= 1, \ldots, k$}
        \STATE $x_{i} = F(x_{i-1}, \theta)$
				\STATE $\dot{x}_i = A_{i-1} \dot{x}_{i-1} + B_{i-1}\dot{\theta}$
				\STATE $[A_{i-1}, B_{i-1}] \in J_{F}(x_{i-1},\theta)$
    \ENDFOR
    \STATE \textbf{Return:} $\dot{x}_k$
  \end{algorithmic}
  \end{minipage}%
  \begin{minipage}[t]{0.55\linewidth}
     \begin{algorithmic}
     \STATE \textbf{Reverse mode (VJP):}
     $\bar{\theta}_k = 0$.
    \FOR{$i= 1, \ldots, k$}
        \STATE $x_{i} = F(x_{i-1}, \theta)$
    \ENDFOR
    \FOR{$i= k, \ldots, 1$}
        \STATE $\bar{\theta}_k = \bar{\theta}_k +  B_{i-1}^T \bar{w}_i$
        \quad $\bar{w}_{i-1} = A_{i-1}^T\bar{w}_i$
         \STATE $[A_{i-1}, B_{i-1}] \in J_{F}(x_{i-1},\theta)$
    \ENDFOR
    \STATE $\bar{\theta}_k = \bar{\theta}_k +  J^T \bar{w}_0$, $J \in J_{x_0}(\theta)$
    \STATE \textbf{Return:}
$\bar{\theta}_k$
  \end{algorithmic}
  \end{minipage}
\end{algorithm}
The following result is a consequence of \Cref{cor:classicalJacobians} combined with algorithmic differentiation arguments, its proof is given in Appendix \ref{app:impl-diff}.
\begin{proposition}[Convergence of VJP and JVP]
    Let $k \in \NN$, $\dot{\theta} \in \RR^m$, $\bar{w}_k \in \RR^p$, $x_k \in \RR^p$, $\dot{x}_k \in \RR^p$, $\bar{\theta}_k^T \in \RR^m$ be as in Agorithm \ref{alg:autodiff0} under Assumption \ref{ass:contraction}. Then for almost all $\theta \in \RR^m$, $\dot{x}_k \to \frac{\partial \bar{x}}{\partial \theta} \dot{\theta}$.

		Assume furthermore that, as $k \to \infty$, $\bar{w}_k \to \bar{w}$ (for example, $\bar{w}_k = \nabla \ell(x_k)$ for a $C^1$ loss $\ell$), then for almost all $\theta \in \RR^m$, $\bar{\theta}_k^T \to \bar{w}^T  \frac{\partial \bar{x}}{\partial \theta}$.    \label{prop:convergenceAlogirthmicDiff}
\end{proposition}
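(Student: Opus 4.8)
The plan is to reduce everything to \Cref{cor:classicalJacobians}, which already gives, for almost all $\theta$, that $\frac{\partial}{\partial\theta}x_k(\theta)$ is well-defined for all $k$ and converges to $\frac{\partial}{\partial\theta}\bar{x}(\theta)$. The first step is to identify the quantities produced by \Cref{alg:autodiff0} with classical derivatives on a full-measure set of $\theta$. Fix $\theta$ in the full-measure set $R$ given by \Cref{cor:classicalJacobians}; on $R$ the maps $x_k(\cdot)$ are differentiable and the recursion \eqref{eq:autodiffSection} is forced to use the single matrix $[A_{k-1},B_{k-1}] = J_F(x_{k-1}(\theta),\theta)$ equal to the classical Jacobian $\frac{\partial F}{\partial(x,\theta)}(x_{k-1}(\theta),\theta)$ — here one must also intersect with the full-measure set on which $F$ itself is differentiable at each of the countably many points $(x_{k-1}(\theta),\theta)$, which is still full measure. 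Consequently $J_k = \frac{\partial}{\partial\theta}x_k(\theta)$ exactly, and the forward-mode output is $\dot{x}_k = J_k\dot\theta = \frac{\partial}{\partial\theta}x_k(\theta)\,\dot\theta$, while the reverse-mode output is $\bar\theta_k^T = \bar{w}_k^T J_k = \bar{w}_k^T\,\frac{\partial}{\partial\theta}x_k(\theta)$ (a standard unrolling of \Cref{alg:autodiff0}: the reverse loop computes $\bar w_0^T = \bar w_k^T A_{k-1}\cdots A_0$ and accumulates $\sum_{i} \bar w_k^T A_{k-1}\cdots A_i B_{i-1}$, which is exactly $\bar w_k^T J_k$ after adding the $J^T\bar w_0$ term for the initialization).

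The JVP claim then follows immediately: $\dot{x}_k = \frac{\partial}{\partial\theta}x_k(\theta)\,\dot\theta \to \frac{\partial\bar{x}}{\partial\theta}\,\dot\theta$ by \Cref{cor:classicalJacobians} and continuity of matrix–vector multiplication. For the VJP claim, write $\bar\theta_k^T = \bar{w}_k^T\,\frac{\partial}{\partial\theta}x_k(\theta)$ and split
\begin{align*}
  \bar\theta_k^T - \bar{w}^T\frac{\partial\bar{x}}{\partial\theta}
  = (\bar{w}_k - \bar{w})^T\frac{\partial}{\partial\theta}x_k(\theta)
  + \bar{w}^T\left(\frac{\partial}{\partial\theta}x_k(\theta) - \frac{\partial\bar{x}}{\partial\theta}\right).
\end{align*}
The second term goes to zero by \Cref{cor:classicalJacobians}. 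For the first term, note that \Cref{cor:unrollingConvergence} (or directly \Cref{th:convergenceRecursion} applied to the bounded sequence $\{J_{x_k}(\theta)\}$) shows that $\|J_k\|$ is bounded uniformly in $k$; since $\bar{w}_k \to \bar{w}$ the factor $\bar{w}_k - \bar w \to 0$, so the product tends to zero. The case $\bar{w}_k = \nabla\ell(x_k)$ with $\ell \in C^1$ is covered because $x_k \to \bar{x}$ and $\nabla\ell$ is continuous, giving $\bar{w}_k \to \nabla\ell(\bar{x}) =: \bar{w}$.

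The main obstacle is the bookkeeping in the first step: making precise that the algorithm's outputs coincide with classical derivatives requires (i) the full-measure statement of \Cref{cor:classicalJacobians} — in particular that at such $\theta$ the conservative Jacobian $J_F$ is a singleton at each iterate point and equals the true Jacobian, so that the ``choice'' $[A_{i-1},B_{i-1}]\in J_F(x_{i-1},\theta)$ in \Cref{alg:autodiff0} is in fact forced — and (ii) carefully unrolling the reverse loop to verify $\bar\theta_k^T = \bar{w}_k^T J_k$, where one has to track that the accumulation is done with the \emph{same} matrices $A_{i-1},B_{i-1}$ as the forward Jacobian recursion, which holds on the full-measure set. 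Once this identification is in place, everything else is continuity plus the uniform bound on $\|J_k\|$, both of which are already available from the earlier results.
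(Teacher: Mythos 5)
Your proposal is correct and follows essentially the same route as the paper's proof: reduce to \Cref{cor:classicalJacobians} so that on a full-measure set of $\theta$ the recursion \eqref{eq:autodiffSection} involves only classical Jacobians, identify the forward-mode output with $J_k\dot\theta$ and (by unrolling the reverse loop) the reverse-mode output with $\bar{w}_k^T J_k$, then pass to the limit. The only cosmetic difference is that you make the product convergence explicit via a two-term splitting with a uniform bound on $\|J_k\|$, whereas the paper simply invokes convergence of both factors $\bar{w}_k$ and $J_k$.
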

\begin{remark}{\rm
				In addition to \Cref{prop:convergenceAlogirthmicDiff}, in both cases, for all $\theta$, all accumulation points of both $\dot{x}_k$ and $\bar{\theta}_k^T$ are elements of $\jalg \dot{\theta}$ and $\bar{w}^T\jalg$ respectively.
				\label{rem:convergenceAutodiff}}
\end{remark}
\subsection{Linear convergence rate for semialgebraic piecewise smooth selection function}
Semialgebraic functions are ubiquitous in machine learning (piecewise polynomials, $\ell_1$, $\ell_2$ norms, determinant matrix rank \ldots). We refer the reader to \cite{bolte2020mathematical} for a thorough discussion of their extensions, and use in machine learning. For more technical details, see \cite{coste2000introduction,coste2000introduction2} for introductory material on semialgebraic and o-minimal geometry.
\paragraph{Lipschitz gradient selection functions.}
Let $F \colon \RR^p \mapsto \RR^q$ be semialgebraic and continuous. We say that $F$ has a {\em Lipschitz gradient selection} $(s, F_1, \ldots, F_m)$ if $s \colon \RR^p \mapsto \left(1,\ldots, m\right)$ is semialgebraic and there exists $L\geq0$ such that for $i = 1 \ldots, m$, $F_i\colon \RR^p \mapsto \RR^p$ is semialgebraic with $L$-Lipschitz Jacobian, and for all $x \in \RR^p$, $F(x) = F_{s(x)}(x)$.

For any $x \in \RR^p$, set $I(x) = \left\{ i\in\left\{ 1,\ldots,m \right\}, \, F(x) = F_i(x) \right\}$. The set-valued map $J^s_F \colon \RR^p \rightrightarrows \RR^{p \times q}$ given by
\begin{align*}
  J^s_F \colon x &\rightrightarrows \mathrm{conv}\left( \left\{ \frac{\partial F_i}{\partial x}(x),\, i \in I(x) \right\} \right),
\end{align*}
is a conservative Jacobian for $F$ as shown in~\cite{bolte2020mathematical}. Here $\frac{\partial F_i}{\partial x}$ denotes the classical Jacobian of $F_i$. Let us stress that such a structure is ubiquitous in applications \cite{bolte2020mathematical,lee2020oncorrectness}.

\paragraph{Rate of convergence.}
We may now strengthen Corollary~\ref{cor:unrollingConvergence} by proving  the linear convergence of piggyback derivatives towards the fixed point. The following  is a consequence of the fact that the proposed selection conservative Jacobians of Lipschitz gradient selection functions are Lipschitz-like (\Cref{lem:explicitSelection} in \Cref{app:lipcons}). Note that semialgebraicity is only used as a \emph{sufficient} condition to ensure conservativity of the selection Jacobian together with this Lipschitz like property. It could be relaxed if it can be guaranteed by other means, in particular one could consider the broader class of definable functions in order to handle log-likelihood data fitting terms.
\begin{corollary}[Linear convergence of piggyback derivatives]\label{cor:lipconv}
				In addition to \Cref{ass:contraction}, assume that $F$ has a Lipschitz gradient selection structure as above. Then, for any $\theta$ and $\epsilon > 0$, there exists $C>0$ such that the recursion~\eqref{eq:autodiff} with $J_{F} = J_{F}^{s}$ satisfies
  \begin{align*}
    \gap( J_{x_k}(\theta),\jalg(\theta)) \leq C (\sqrt{\rho} + \epsilon)^k,\quad \forall k \in \NN.
  \end{align*}
Moreover, classical Jacobians in \Cref{cor:classicalJacobians} converge at a linear rate for almost all $\theta$.
\end{corollary}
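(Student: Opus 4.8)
The plan is to run a perturbation analysis of the piggyback recursion \eqref{eq:autodiff} around the limiting fixed-point set $\jalg(\theta)$, using the Lipschitz-like behaviour of the selection conservative Jacobian $J_F^s$ (\Cref{lem:explicitSelection}). Fix $\theta$ and abbreviate $\Jc := J_F^s(\bar x(\theta),\theta)$, $\Jc_k := J_F^s(x_k(\theta),\theta)$, $\Xc_k := J_{x_k}(\theta)$ and $g_k := \gap(\Xc_k,\jalg(\theta))$. By \Cref{ass:contraction} every $[A,B] \in \Jc_k$ (and in $\Jc$) has $\|A\| \le \rho$, so $\Jc$ is a compact packet to which \Cref{th:convergenceRecursion} applies: $\jalg(\theta) = \fixj$ with $\fixj = \Jc(\fixj)$ compact, and I set $R := \max_{Y\in\jalg(\theta)}\|Y\| < \infty$. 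The recursion \eqref{eq:autodiff} is exactly $\Xc_{k+1} = \Jc_k(\Xc_k)$, a contraction iteration with a \emph{moving} matrix packet, whereas $\jalg(\theta)$ is the fixed point of the \emph{frozen} packet $\Jc$; the whole argument is about controlling this discrepancy.

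First I would quantify the drift of the packet. Because $F$ has a Lipschitz gradient selection, continuity of $F$ and of the pieces $F_i$ forces the active set not to grow near $\bar x(\theta)$, so on some neighbourhood $U$ of $\bar x(\theta)$ one has $I(x,\theta)\subseteq I(\bar x(\theta),\theta)$; \Cref{lem:explicitSelection} then provides $L\ge0$ with $\gap(\Jc_k,\Jc)\le L\,\|x_k(\theta)-\bar x(\theta)\|$ whenever $x_k(\theta)\in U$. Under \Cref{ass:contraction}, $F_\theta$ is a strict contraction and the displayed bound gives $\|x_k(\theta)-\bar x(\theta)\|\le \rho^k\|x_0-F_\theta(x_0)\|/(1-\rho)$; hence there is $k_0$ with $x_k(\theta)\in U$ for $k\ge k_0$, and so $\delta_k := \gap(\Jc_k,\Jc)\le c\rho^k$ for $k\ge k_0$ with a suitable $c$.

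The core step is the scalar recursion for $g_k$. Take any $Z=AX+B\in\Xc_{k+1}$ with $[A,B]\in\Jc_k$, $X\in\Xc_k$; choose $Y\in\jalg(\theta)$ with $\|X-Y\|\le g_k$ and $[A',B']\in\Jc$ with $\|A-A'\|\le\delta_k$, $\|B-B'\|\le\delta_k$. Then $A'Y+B'\in\Jc(\fixj)=\jalg(\theta)$ and
\[
\|Z-(A'Y+B')\| \le \|A\|\,\|X-Y\| + \|A-A'\|\,\|Y\| + \|B-B'\| \le \rho\,g_k + \delta_k(R+1).
\]
Taking the supremum over $Z$ yields $g_{k+1}\le\rho\,g_k + c(R+1)\rho^k$ for $k\ge k_0$; since $g_{k_0}<\infty$ (both $\Xc_{k_0}$ and $\jalg(\theta)$ are compact), unrolling gives $g_k\le \rho^{\,k-k_0}g_{k_0} + c(R+1)(k-k_0)\rho^{\,k-1}$, i.e. $g_k = O(k\rho^k)$. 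As $k\,(\rho/(\sqrt\rho+\epsilon))^k\to0$ for every $\epsilon>0$, enlarging the constant to also cover $k<k_0$ gives $\gap(J_{x_k}(\theta),\jalg(\theta))\le C(\sqrt\rho+\epsilon)^k$ for all $k$. Finally, for almost all $\theta$ \Cref{cor:classicalJacobians} identifies $J_{x_k}(\theta)$ and $\jalg(\theta)$ with the singletons $\{\partial x_k/\partial\theta\}$ and $\{\partial\bar x/\partial\theta\}$, turning the gap into $\|\partial x_k/\partial\theta-\partial\bar x/\partial\theta\|$, so classical Jacobians converge at the same linear rate.

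The hard part is not the algebra but the fact that conservative Jacobians are only outer semicontinuous: the one-sided, neighbourhood-restricted estimate of \Cref{lem:explicitSelection} (the active set can only shrink near $\bar x(\theta)$, each active piece being $C^{1,1}$) is exactly what makes $\delta_k$ summable against $\rho^k$, and it is the reason the bound is intrinsically asymptotic — it only takes effect once $x_k(\theta)$ has entered $U$, in the spirit of \Cref{rem:local}. Everything else is \Cref{th:convergenceRecursion} rerun with a perturbed packet.
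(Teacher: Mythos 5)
Your proposal is correct, and it follows the paper's overall strategy up to a point: like the paper, you combine the linear convergence of $x_k(\theta)$ to $\bar x(\theta)$ with the one-sided Lipschitz estimate of \Cref{lem:explicitSelection} to get $\gap\bigl(J^s_F(x_k(\theta),\theta),\,J^s_F(\bar x(\theta),\theta)\bigr)\le c\rho^k$ for $k$ large. Where you genuinely diverge is in how this exponentially decaying perturbation is absorbed. The paper invokes \Cref{cor:perturbedConvergence}, whose proof dominates the moving recursion by the one driven by the enlarged frozen packet $\bar\Jc+\epsilon\,\mathbb{B}$, applies the Banach--Picard rate of \Cref{th:convergenceRecursion} to that enlarged packet, and then uses the Lipschitz continuity of the fixed-set map (\Cref{prop:lipschitzContinuity}) to compare $\fix(\bar\Jc+\epsilon\,\mathbb{B})$ with $\fix(\bar\Jc)$; a final block-splitting of the trajectory yields the $(\sqrt{\rho}+\epsilon)^k$ rate. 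You instead prove the one-step scalar estimate $g_{k+1}\le \rho\,g_k+(R+1)\delta_k$ directly, by matching each $Z=AX+B\in\Xc_{k+1}$ with $A'Y+B'\in\Jc(\jalg(\theta))=\jalg(\theta)$; unrolling gives $g_k=O(k\rho^k)$, which is sharper than the stated $(\sqrt{\rho}+\epsilon)^k$ and makes no use of the stability of the fixed-set map. Your route is more elementary and quantitatively better for this specific corollary; the paper's machinery (Propositions~\ref{prop:lipschitzContinuity} and~\ref{prop:perturbedConvergence}) is heavier but is reused elsewhere (e.g., in the proofs of \Cref{lem:regularityLimit} and \Cref{cor:unrollingConvergence}), which explains why the paper routes through it. Two small points to keep in mind: the one-sidedness of \Cref{lem:explicitSelection} is exactly what your matching step needs (you only require every element of $\Jc_k$ to be $\delta_k$-close to $\Jc$, not the reverse), and your handling of the classical-Jacobian statement matches the paper's separate argument via almost-everywhere single-valuedness of the conservative Jacobians $J_{x_k}$ and $\jalg$.
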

\section{Application to proximal splitting methods in convex optimization}\label{sec:prox}
Consider the composite parametric convex optimization problem, where $\theta \in \RR^m$ represents parameters and $x \in \RR^p$ is the decision variable
\begin{align*}
    \bar{x}(\theta) = {\arg\min}_x f(x,\theta) + g(x,\theta).
\end{align*}
The purpose of this section is to construct examples of function $F$ used in recursion \eqref{eq:iterative} based on known algorithms. The following assumption will be standing throughout the section.
\begin{assumption}\label{ass:reg-prox}{\rm
				$f$ is semialgebraic, convex, its gradient with respecto to $x$ for fixed $\theta$, $\nabla_x f$, is locally Lipschitz jointly in $(x,\theta)$ and $L$-Lipschitz in $x$ for fixed $\theta$. Semialgebraicity implies that $\nabla_x f$ is path-differentiable jointly in $(x,\theta)$, we denote by $J^2_f$ its Clarke Jacobian.

				$g$ is semialgebraic, convex in $x$ for fixed $\theta$, and lower semicontinuous. For all $\alpha>0$, we assume that $G_\alpha (x,\theta) \mapsto \prox_{\alpha g(\cdot,\theta)}(x)$ is locally Lipschitz jointly in $(x,\theta)$. semialgebraicity implies  that it is also path differentiable jointly in $(x,\theta)$, we denote by $J_{G_\alpha}$ its Clarke Jacobian.}
\end{assumption}
This assumption covers a very large diversity of problems in convex optimization as most gradient and prox operations used in practice are semialgebraic.
Under Assumption \ref{ass:reg-prox}, we will provide sufficient conditions on $f$ and $g$ for \Cref{ass:contraction} to hold for different algorithmic recursions. These are therefore sufficient for the validity of the convergence results in  \Cref{cor:unrollingConvergence} and \Cref{cor:classicalJacobians}, \Cref{prop:convergenceAlogirthmicDiff}, as well \Cref{cor:lipconv} in the piecewise selection case. The proofs are postponed to \Cref{app:prox}.

\subsection{Splitting algorithms}
In this section we provide sufficient condition for \Cref{ass:contraction} to hold. The underlying conservative Jacobian is obtained by combining Clarke Jacobians of elementary algorithmic operations (gradient, proximal operator in \Cref{ass:reg-prox}), using the compositional rules of differential calculus \cite{bolte2020mathematical} and implicit differentiation \cite{bolte2021nonsmooth}. Using \cite{bolte2020conservative}, such Jacobians are conservative by semialgebraicity and their combination provide conservative Jacobians for the corresponding algorithmic recursion $F$. These objects are explicitly constructed in \Cref{app:prox}.
\paragraph{Forward--backward algorithm.}
The forward--backward iterations are given for $\alpha > 0$ by
\begin{align}
    \label{eq:FBalgo}
    x_{k+1} = \prox_{\alpha g(\cdot,\theta)}\left(x_k  - \alpha \nabla_x f(x_k,\theta)\right) .
\end{align}

\begin{proposition}\label{prop:fb}
				Under \Cref{ass:reg-prox} with $0 < \alpha < \frac{2}{L}$, denote by $F_\alpha \colon \RR^{p \times m} \to \RR^p$ the forward-backward recursion in \eqref{eq:FBalgo}. For $\mu > 0$, if either $f$ or $g$ is $\mu$-strongly convex in $x$ for all $\theta$, then $F_{\alpha}$ is a strict contraction and \Cref{ass:contraction} holds.
\end{proposition}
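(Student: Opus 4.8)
The plan is to write the forward--backward map as a composition of its forward (gradient) and backward (proximal) steps, and to combine the contraction/nonexpansiveness properties of these two blocks first at the level of the maps (to get strict contractivity of $F_\alpha$) and then at the level of conservative Jacobians (to get \Cref{ass:contraction}). Fix $\theta$ and write $F_\alpha(\cdot,\theta) = G_\alpha(\cdot,\theta)\circ T_\alpha(\cdot,\theta)$ with $T_\alpha(x,\theta) = x - \alpha\nabla_x f(x,\theta)$ and $G_\alpha(y,\theta) = \prox_{\alpha g(\cdot,\theta)}(y)$. The two elementary facts I would record are: (i) since $g(\cdot,\theta)$ is convex lower semicontinuous, $G_\alpha(\cdot,\theta)$ is firmly nonexpansive, hence $1$-Lipschitz, and it is $\tfrac{1}{1+\alpha\mu}$-Lipschitz if $g(\cdot,\theta)$ is moreover $\mu$-strongly convex; (ii) since $\nabla_x f(\cdot,\theta)$ is monotone (convexity of $f$) and $L$-Lipschitz, co-coercivity makes $T_\alpha(\cdot,\theta)$ nonexpansive for $0<\alpha<2/L$, and a strict contraction with factor $\max\{|1-\alpha\mu|,|1-\alpha L|\}<1$ if $f(\cdot,\theta)$ is moreover $\mu$-strongly convex. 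Composing: if $f$ is strongly convex then $F_\alpha(\cdot,\theta)$ is a $1$-Lipschitz map after a strict contraction; if $g$ is strongly convex then it is a strict contraction after a nonexpansive map. Either way $F_\alpha(\cdot,\theta)$ is a strict contraction with a factor $\rho<1$ independent of $\theta$.

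For \Cref{ass:contraction}, I would use the conservative Jacobian of $F_\alpha$ assembled (in the appendix) from $J^2_f$ and $J_{G_\alpha}$ by the compositional calculus: any $[A,B]\in J_{F_\alpha}(x,\theta)$ has $A = P\,(I-\alpha H)$ for some $H$ equal to the $x$-block of an element of $J^2_f(x,\theta)$ and some $P$ equal to the block of an element of $J_{G_\alpha}(T_\alpha(x,\theta),\theta)$ relative to its first argument. It then suffices to bound $\|A\|_{\op}$ uniformly in $(x,\theta)$. Because a Clarke Jacobian is a convex hull of limits of genuine Jacobians and the operator-norm ball is convex and closed, the Lipschitz estimates of (i)--(ii) transfer verbatim: $\|P\|_{\op}\le 1$ in general and $\le\tfrac{1}{1+\alpha\mu}$ if $g$ is strongly convex, while $H$ is symmetric positive semidefinite with $H\preceq LI$ (and $H\succeq\mu I$ if $f$ is strongly convex), so $\|I-\alpha H\|_{\op}\le\max\{1,|1-\alpha L|\}=1$ in general and $\le\max\{|1-\alpha\mu|,|1-\alpha L|\}<1$ if $f$ is strongly convex. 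Multiplying gives $\|A\|_{\op}\le\rho<1$ with the same $\rho$, which is exactly \Cref{ass:contraction}.

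The convex-analysis facts in (i)--(ii) are classical; the part requiring care is the bookkeeping around the conservative Jacobian --- verifying that the $x$-block of $J_{F_\alpha}$ really factors as $P(I-\alpha H)$ (this uses the compositional rules of \cite{bolte2020mathematical} together with the nonsmooth implicit function theorem of \cite{bolte2021nonsmooth} applied to the prox, as carried out in the appendix), and that the operator-norm bounds on the underlying maps pass to their Clarke Jacobians uniformly over all $(x,\theta)$. It is also worth highlighting that the hypothesis $0<\alpha<2/L$ is precisely what keeps $T_\alpha$ nonexpansive when only $g$ --- and not $f$ --- carries the strong convexity.
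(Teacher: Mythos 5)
Your proposal is correct and follows essentially the same route as the paper's proof: decompose $F_\alpha$ into the proximal step composed with the gradient step, build $J_{F_\alpha}$ by the compositional calculus so that its $x$-block factors as $C(I-\alpha A)$, and bound the operator norms of the two factors via the symmetric eigenvalue estimates ($I-\alpha A$ with spectrum in $(-1,1]$, strictly inside $(-1,1)$ under strong convexity of $f$; $C$ with spectrum in $[0,1]$, in $[0,1/(1+\alpha\mu)]$ under strong convexity of $g$). The only cosmetic difference is that you first establish the Lipschitz constants at the level of the maps and then transfer them to the Clarke Jacobians by convexity of the norm ball, whereas the paper argues directly on the Jacobians at differentiability points; both yield the same uniform $\rho<1$.
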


\paragraph{Douglas--Rachford.}
Given $\alpha > 0$, the algorithm goes as follows
\begin{align}
    \label{eq:DRalgo}
    y_{k+1} = \frac{1}{2}(I + R_{\alpha f(\cdot,\theta)} R_{\alpha g(\cdot,\theta)}) y_k,
\end{align}
where $R_{\alpha f(\cdot,\theta)} = 2 \prox_{\alpha f(\cdot,\theta)} - I$ is the reflected proximal operator, which is $1$-Lipschitz (and similarly for $g$).
Following \cite[Theorem 26.11]{bauschke2011convex}, if the problem has a minimizer, then $(y_k)_{k \in \NN}$ converges to a fixed point of \eqref{eq:DRalgo}, $\bar{y}$ such that $\bar{x} = \prox_{\alpha g}(\bar{y})$ is a solution to the optimization problem.
Following \cite[Theorem 1]{giselsson2016linear}, if $f$ is strongly convex, then $R_{\alpha f(\cdot,\theta)}$ is $\rho$-Lipschitz for some $\rho <1$ and our differentiation result applies to Douglas-Rachford splitting in this setting.

\begin{proposition}\label{prop:DR}
  Under \Cref{ass:reg-prox} with $\alpha > 0$, denote by $F_\alpha \colon \RR^{p \times m} \to \RR^p$ the Douglas-Rachford recursion in \eqref{eq:DRalgo}. If $f$ is $\mu$-strongly convex in $x$ for all $\theta$, then $F_{\alpha}$ is a strict contraction and \Cref{ass:contraction} holds.
\end{proposition}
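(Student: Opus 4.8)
The plan is to verify \Cref{ass:contraction} for the Douglas--Rachford recursion $F_\alpha(y,\theta) = \tfrac12\bigl(I + R_{\alpha f(\cdot,\theta)} R_{\alpha g(\cdot,\theta)}\bigr)(y)$ by bounding the operator norm of the first block $A$ of every element $[A,B]$ of a conservative Jacobian $J_{F_\alpha}$. First I would build a conservative Jacobian for $F_\alpha$ by compositional calculus: write $F_\alpha = \tfrac12 I + \tfrac12\, R_{\alpha f}\circ R_{\alpha g}$ where $R_{\alpha f} = 2\prox_{\alpha f(\cdot,\theta)} - I$ and likewise for $g$; under \Cref{ass:reg-prox} both $\prox_{\alpha f(\cdot,\theta)}$ and $\prox_{\alpha g(\cdot,\theta)}$ are semialgebraic and locally Lipschitz jointly in $(x,\theta)$, hence path-differentiable, with Clarke Jacobians $J_{G^f_\alpha}$, $J_{G^g_\alpha}$. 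Applying the sum and chain rules for conservative Jacobians from \cite{bolte2020conservative} yields $J_{F_\alpha}(y,\theta)$ whose $y$-block is of the form $\tfrac12 I + \tfrac12 (2P_f - I)(2P_g - I)$ where $P_f$ is the $x$-part of an element of $J_{G^f_\alpha}(R_{\alpha g}(y,\theta),\theta)$ and $P_g$ is the $x$-part of an element of $J_{G^g_\alpha}(y,\theta)$ (the $\theta$-block $B$ is irrelevant for the contraction condition).

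The key step is the operator-norm bound. The classical fact (from monotone operator theory, e.g. \cite{bauschke2011convex}) is that $\prox_{\alpha g(\cdot,\theta)}$ is firmly nonexpansive, so its reflection $R_{\alpha g}$ is $1$-Lipschitz; the infinitesimal counterpart is that every Clarke-Jacobian selection $P_g$ (in $x$) of a firmly nonexpansive map is a symmetric positive semidefinite matrix with $\|P_g\|_{\mathrm{op}} \le 1$, equivalently $2P_g - I$ has operator norm at most $1$. This passes from the map to its conservative Jacobian because the property ``$P$ symmetric, $0 \preceq P \preceq I$'' is preserved under limits of differentials and under convex combinations, and the Clarke Jacobian is the convex hull of such limits. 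When $f$ is $\mu$-strongly convex in $x$, $\prox_{\alpha f(\cdot,\theta)}$ is a contraction and, more sharply, by \cite[Theorem 1]{giselsson2016linear} its reflection $R_{\alpha f}$ is $\bar\rho$-Lipschitz with $\bar\rho = \max\{\tfrac{1}{1+\alpha\mu}, \ldots\} < 1$; the infinitesimal version is that every selection $P_f$ (in $x$) satisfies $\|2P_f - I\|_{\mathrm{op}} \le \bar\rho$. I would either quote this quantitative bound directly or re-derive it by noting $\prox_{\alpha f(\cdot,\theta)} = (I + \alpha\partial f(\cdot,\theta))^{-1}$, so $P_f = (I + \alpha H)^{-1}$ with $H \in J^2_f \succeq \mu I$ symmetric PSD (strong convexity), whence the eigenvalues of $P_f$ lie in $(0, \tfrac{1}{1+\alpha\mu}]$ and $2P_f - I$ has eigenvalues in $(-1, \tfrac{1-\alpha\mu}{1+\alpha\mu}]$, giving $\|2P_f - I\|_{\mathrm{op}} \le \tfrac{1}{1+\alpha\mu} \cdot$ -- more carefully, $\le \max\{\tfrac{\alpha\mu-1}{\alpha\mu+1}, \tfrac{1-\alpha\mu}{1+\alpha\mu}\}$ once one also uses $\|\prox\| \le 1$ to cap from below, landing at some explicit $\bar\rho<1$.

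Combining, the $y$-block $A = \tfrac12 I + \tfrac12(2P_f - I)(2P_g - I)$ satisfies, for any unit vector $u$,
\[
  \|A u\| \le \tfrac12\|u\| + \tfrac12\|2P_f - I\|_{\mathrm{op}}\,\|(2P_g - I)u\| \le \tfrac12 + \tfrac12\bar\rho =: \rho < 1,
\]
so $\|A\|_{\mathrm{op}} \le \rho$ uniformly in $(y,\theta)$, and in particular $F_\alpha$ itself is $\rho$-Lipschitz in $y$ (hence a strict contraction). This establishes \Cref{ass:contraction} with this explicit $\rho$. The main obstacle is the passage from the Lipschitz/firm-nonexpansiveness bounds on the operators $\prox$ and $R$ to the corresponding spectral bounds on \emph{every element of the conservative (Clarke) Jacobian}; this requires care that the PSD-and-norm-bounded structure is stable under the differential limits and convex hull in \eqref{eq:clarkeJacobian}, and that composing two such conservative Jacobians via the chain rule does not inflate the norm beyond the product of the individual bounds — which is exactly what the estimate above, using $\|2P_g-I\|_{\mathrm{op}}\le 1$, ensures.
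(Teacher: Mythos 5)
Your proposal is correct and follows essentially the same route as the paper's proof: decompose $F_\alpha = \tfrac12 I + \tfrac12 R_{\alpha f}\circ R_{\alpha g}$, obtain the $x$-block of the conservative Jacobian of $\prox_{\alpha f(\cdot,\theta)}$ as $(I+\alpha A)^{-1}$ via implicit differentiation of $z+\alpha\nabla_x f(z,\theta)-x=0$ with $A\succeq \mu I$ symmetric, deduce $\|2(I+\alpha A)^{-1}-I\|_{\op}\le\bar\rho<1$ while the reflected prox of $g$ contributes a symmetric factor of norm at most $1$, and conclude $\|E\|_{\op}\le\tfrac12+\tfrac12\bar\rho<1$. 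The only cosmetic slip is in your explicit $\bar\rho$: the lower eigenvalue bound for $2P_f-I$ should involve the Lipschitz constant $L$ of $\nabla_x f$ (eigenvalues in $[\tfrac{1-\alpha L}{1+\alpha L},\tfrac{1-\alpha\mu}{1+\alpha\mu}]$), not $\mu$ at both ends, but this does not affect the argument.
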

\paragraph{Alternating Direction Method of Multipliers algorithms.}
Consider the separable convex problem
\begin{equation}\label{eq:sep-pb}
\min_{u, v} \phi_\theta(u) + \psi_{\theta}(v) \quad \text{subject to} \quad A_{\theta}u + B_{\theta}v = c_{\theta} .
\end{equation}
The alternating direction method of multipliers (ADMM) algorithm combines two partial minimization of an augmented Lagrangian, and a dual update:
\begin{equation}\label{eq:admm}
  \begin{split}
  u_{k+1} &= \arg\min_{u} \left\{ \phi_{\theta}(u) + x^{\top} A_{\theta} u + \frac{\alpha}{2} \| A_{\theta}u + B_{\theta}v_{k} - c_{\theta} \|_{2}^{2}  \right\} \\
  v_{k+1} &= \arg\min_{v} \left\{ \psi_{\theta}(v) + x^{\top} B_{\theta} v + \frac{\alpha}{2} \| A_{\theta}u_{k+1} + B_{\theta}v_{k} - c_{\theta} \|_{2}^{2}  \right\} \\
  x_{k+1} &= x_{k} + \alpha (A_{\theta}u_{k+1} + B_{\theta}v_{k+1} - c_{\theta}) .
  \end{split}
\end{equation}
As observed in~\cite{gabay1983chapter}, the ADMM algorithm can be seen as the Douglas-Rachford splitting method applied to the Fenchel dual of problem \eqref{eq:sep-pb} (see Appendix~\ref{app:eq-admm-dr} for more details). More precisely, ADMM updates are equivalent to Douglas-Rachford iterations applied to the following problem
\begin{equation}\label{eq:sep-pb-dual}
  \min_x \underbrace{c_{\theta}^\top x + \phi_{\theta}^*(-A_{\theta}^\top x)}_{f(x, \theta)} + \underbrace{\psi_{\theta}^*(-B_{\theta}^\top x)}_{g(x, \theta)}.
\end{equation}
Therefore, if $\phi_{\theta}$ is strongly convex with Lipschitz gradient and $A_{\theta}$ is injective, then ADMM converges linearly and one is able to combine derivatives of proximal operators to differentiate ADMM.

\subsection{Numerical illustration.}
We now detail how \Cref{fig:linearCvgt} discussed in the introduction is obtained, and how it illustrates our theoretical results.
We consider four scenarios (Ridge, Lasso, Sparse inverse covariance selection and trend filtering) corresponding to the four columns.
For each of them, the first line shows the empirical linear rate of the iterates $x_k$ and the second line shows the empirical linear rate of the derivative $\frac{\partial}{\partial \theta}x_k$.
All experiments are repeated 100 times and we report the median along with first and last deciles.

\paragraph{Forward--Backward for the Ridge.}
The Ridge estimator is defined for $\theta > 0$ as $\bar{x}(\theta) = \arg\min_{x \in \mathbb{R}^{p}} \frac{1}{2} \|Ax - b\|_2^2 + \theta \| x \|_{2}^{2}$
Among several possibilities to solve it, one can use the Forward--Backward algorithm applied to $f\colon (x,\theta) \mapsto \frac{1}{2} \|Ax - b\|_2^2$ and $g: \theta \| x \|_{2}^{2}$.
Since $g$ is strongly convex, the operator $F_{\alpha}$ is strongly convex, and thus \Cref{prop:fb} may be applied.

\paragraph{Forward--Backward algorithm for the Lasso.}
Consider the Forward--Backward algorithm applied to the Lasso problem \cite{tibshirani1996regression}, with parameter $\theta  > 0$, $\bar{x}(\theta) \in \arg\min_{x \in \mathbb{R}^{p}} \frac{1}{2} \|Ax - b\|_2^2 + \theta \|x\|_1  = \arg\min_x \frac{1}{2L} \|Ax - b\|_2^2 + \frac{\theta}{L} \|x\|_1$,
where $L$ is any upper bound on the operator norm of $A^TA$.
The gradient of the quadratic part is $1$ Lipschitz so we may consider the forward backward algorithm \eqref{eq:FBalgo}, with unit step size with $f\colon (x,\theta) \mapsto \frac{1}{2L} \|Ax - b\|_2^2$ and $g \colon (x,\theta) \mapsto  \frac{\theta}{L} \|x\|_1$.

A well known qualification condition involving a generalized support at optimality ensures uniqueness of the Lasso solution \cite{efron2004least,mairal2012complexity}. This conditions holds for generic problem data \cite{tibshirani2013lasso}.
Following \cite[Proposition 5]{bolte2021nonsmooth}, under this qualification condition, the implicit conservative Jacobian $J_F$ is such that, at the solution $x^*$, $J_F(x^*)$ has an operator norm of at most $1$, and the matrix set $I - J_F$ only contains invertible matrices. This means that there exists $\rho<1$, such that any $M \in J_F(x^*)$ has operator norm at most $\rho$. Following \Cref{rem:local}, all our convergence results apply qualitatively.
Note that we recover the results of~\cite[Proposition 2]{bertrand2020implicit} for the Lasso.

\paragraph{Douglas--Rachford for the Sparse Inverse Covariance Selection.}
The Sparse Inverse Covariance Selection~\cite{wainwright2006high,friedman2007sparse} reads $\bar x(\theta) \in \arg\min_{x \in \mathbb{R}^{n \times n}}\operatorname{tr}(Cx) - \log\det x + \theta \sum_{i,j} |x_{i,j}|$,
where $C$ is a symmetric positive matrix and $\theta > 0$.
It is possible to apply Douglas--Rachford to $f: (x,\theta) \mapsto \operatorname{tr}(Cx) - \log\det x$ and $g: (x, \theta) \mapsto \theta \| x \|_{1,1}$.
It is known that $f$ is locally strongly convex, indeed $x \mapsto - \log \det x$ is a standard self-concordant barrier in semidefinite programming~\cite{nesterov1994interior}.
Following \Cref{rem:local}, all our convergence results apply qualitatively.

\paragraph{ADMM for Trend Filtering.}
Introduced in~\cite{tibshirani2014trend} in statistics as a generalization of the Total Variation, the trend filtering estimator with observation $\theta \in \mathbb{R}^p$ reads $\bar x(\theta) = \arg\min_{x \in \mathbb{R}^{p}} \frac{1}{2} \| x - \theta \|_2^2 + \lambda \| D^{(k)} x \|_1$,
where $D^{(k)}$ is a forward finite--difference approximation of a differential operator of order $k$ (here $k=2$).
Using $\psi_{\theta}: u \mapsto \lambda \| u \|_{1}$ (strongly convex), $\phi_{\theta}: v \mapsto \| v - \theta \|_{2}^{2}$, $A_{\theta} = - I$ (injective), $B_{\theta} = D^{(k)}$, and $c_{\theta}=0$, we can apply the ADMM to solve trend filtering.

\section{Failure of automatic differentiation for inertial methods}
\label{sec:inertial}
\begin{figure}[b]
  \centering
  \includegraphics[width=\textwidth]{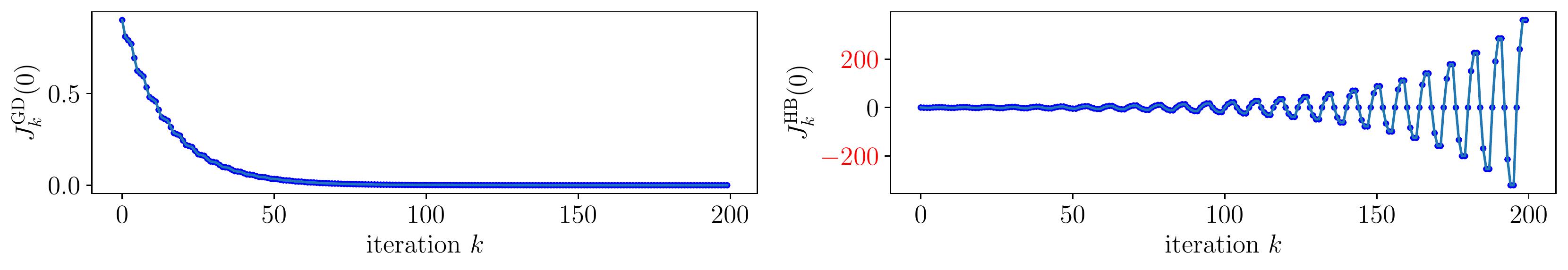}
  \caption{Behaviour of automatic differentiation for first-order methods on a quadratic function. (Left) Stability of the propagation of derivatives for the fixed step-size gradient descent. (Right) Instability of the propagation of Heavy-Ball initialized. Both methods are initialized at optimum.}
  \label{fig:heavyBallInstability}
\end{figure}
In this section we consider the Heavy-Ball method for strongly convex objectives, in its global linear convergence regime. When applied to a $C^2$ objective, accumulation of derivatives converges to the derivative of the solution map~\cite{griewank2003piggyback,mehmood2020automatic,lorraine2020optimizing}. However, we provide a $C^{1,1}$ strongly convex parametric objective with path differentiable derivative, such that forward propagation of derivatives along the Heavy Ball algorithm contains diverging vectors for a given parameter value.
In this example, one may obtain a conservative Jacobian by other means, such as implicit differentiation or algorithmic differentiation of the gradient descent algorithm, both avoiding this divergent behaviors.

\subsection{Heavy-ball algorithm and global convergence}
Consider a function $f \colon \RR^{p} \times \RR^{m} \to \RR$, and $\beta > 0$, for simplicity, when the second argument is fixed we write $f_\theta \colon x \mapsto f(x,\theta)$. Set for all $x,y,\theta$, $F(x,y,\theta) = (x - \nabla f_\theta(x) + \beta (x - y), x)$, consider the Heavy-Ball algorithm $(x_{k+1}, y_{k+1}) = F(x_k, y_k, \theta)$ for $k \in \NN$.

 If $f_\theta$ is $\mu$-strongly convex with $L$-Lipschitz gradient, then, choosing $\alpha = 1/L$ and $\beta < \frac{1}{2}\left( \frac{\mu}{2L} + \sqrt{\frac{\mu^2}{4L^2} + 2} \right)$,
the algorithm will converge globally at a linear rate to the unique solution, $\bar{x}(\theta)$ \cite[Theorem 4]{ghadimi2015global}, local convergence is due to Polyak \cite{polyak1987introduction}. Furthermore, if in addition $f$ is $C^2$ forward propagation of derivatives converge to the derivative of the solution \cite{griewank2003piggyback,griewank2008evaluating,mehmood2020automatic}.
\subsection{A diverging Jacobian accumulation}
Details and proof of the following result are given in Section \ref{sec:suppInertial}.
\begin{proposition}[Piggyback differentiation  fails for the Heavy Ball method]\label{prop:hb-fail}
  Consider $f \colon \RR^2 \to \RR$, such that for all $\theta \in \RR$, $f(x,\theta) = x^2/2$ if  $x \geq 0$ and  $f(x,\theta) = x^2/8$ if  $x < 0$. Assume that $\alpha = 1$ and $\beta = 3/4$. Then the heavy ball algorithm  converges globally to $0$ and $\nabla f$ is path differentiable. The Clarke Jacobian of $F$ with respect to $(x,y)$ at $(0,0,0)$ is $J_F(0,0,0) =  \mathrm{conv} \left\{M_1,M_2 \right\}$,
  where the product $M_1M_1M_2M_2$ has eigenvalue $-9/8$.
  \label{prop:jacobianHeavyBx^2/2all}
\end{proposition}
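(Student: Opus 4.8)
The plan is to verify the three claims of the proposition in turn: global linear convergence of the Heavy-Ball iteration, path differentiability of $\nabla f$, and the computation of the Clarke Jacobian $J_F(0,0,0)$ together with the spectral property of $M_1M_1M_2M_2$. For the convergence claim, I would note that $f_\theta(x) = x^2/2$ for $x\ge 0$ and $x^2/8$ for $x<0$ is independent of $\theta$, is $C^{1,1}$ (the derivative $x$ versus $x/4$ is continuous at $0$), is $\mu$-strongly convex with $\mu = 1/4$, and has $L$-Lipschitz gradient with $L = 1$. With $\alpha = 1 = 1/L$, one checks $\beta = 3/4$ satisfies the bound $\beta < \frac12\bigl(\frac{\mu}{2L} + \sqrt{\frac{\mu^2}{4L^2}+2}\bigr)$ from \cite{ghadimi2015global}; since $\frac{\mu}{2L} = 1/8$, the right-hand side is $\frac12(1/8 + \sqrt{1/64 + 2}) \approx \frac12(0.125 + 1.418) \approx 0.77 > 3/4$, so global linear convergence to the unique minimizer $\bar x = 0$ holds. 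Path differentiability of $\nabla f = \nabla f_\theta$ follows because $\nabla f_\theta$ is piecewise linear (hence semialgebraic, hence path differentiable by \cite{bolte2020conservative}), and this extends trivially to the $(x,\theta)$-jointly statement since there is no $\theta$-dependence.

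The core of the argument is the Clarke Jacobian computation. On the region $x>0$ one has $F(x,y,\theta) = (x - x + \tfrac34(x-y), x) = (\tfrac34 x - \tfrac34 y, x)$, whose Jacobian with respect to $(x,y)$ is the constant matrix $M_1 = \begin{pmatrix} 3/4 & -3/4 \\ 1 & 0 \end{pmatrix}$. On the region $x<0$ one has $F(x,y,\theta) = (x - \tfrac14 x + \tfrac34(x - y), x) = (\tfrac32 x - \tfrac34 y, x)$, with Jacobian $M_2 = \begin{pmatrix} 3/2 & -3/4 \\ 1 & 0 \end{pmatrix}$. The only locus of nonsmoothness of $F$ in the $(x,y)$ variables is the hyperplane $\{x = 0\}$, and near the origin these are the only two pieces; hence by the definition \eqref{eq:clarkeJacobian} of the Clarke Jacobian as the convex hull of limiting Jacobians, $J_F(0,0,0) = \conv\{M_1, M_2\}$. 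It then remains to compute the product $P = M_1 M_1 M_2 M_2$ and exhibit an eigenvalue equal to $-9/8$. This is a routine $2\times 2$ matrix multiplication followed by finding the roots of the characteristic polynomial; I would carry it out directly, using $\det M_1 = 3/4$, $\det M_2 = 3/4$, so $\det P = (3/4)^4 = 81/256$, and computing $\operatorname{tr} P$ explicitly, then solving $\lambda^2 - (\operatorname{tr} P)\lambda + \det P = 0$ and checking $-9/8$ is a root.

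The main obstacle — though it is more a matter of care than of difficulty — is justifying rigorously that $J_F(0,0,0)$ is \emph{exactly} $\conv\{M_1,M_2\}$ and not something smaller: one must argue that both $M_1$ and $M_2$ arise as genuine limits $\lim \partial F/\partial y(y_k)$ along sequences of differentiability points $y_k \to (0,0,0)$, which is immediate since $F$ is affine with constant Jacobian $M_1$ on the open set $\{x>0\}$ and $M_2$ on $\{x<0\}$, both of which accumulate at the origin. The presence of the dummy variable $\theta$ requires only noting that the third coordinate contributes a zero column that we suppress when we speak of ``the Clarke Jacobian with respect to $(x,y)$''. Everything else is bookkeeping: confirming the $C^{1,1}$ and strong-convexity constants, and the arithmetic of the eigenvalue computation. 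The deeper conceptual content — that this configuration of matrices actually produces a diverging piggyback sequence — is what forces $\jalg$ to be genuinely set-valued and is developed in the supplementary section; the proposition itself only needs the eigenvalue fact, since an eigenvalue of modulus $9/8 > 1$ for a length-four product of matrices drawn from $J_F(0,0,0)$ immediately yields, by iterating the pattern $M_1M_1M_2M_2$, an unbounded trajectory of \eqref{eq:autodiffSection}.
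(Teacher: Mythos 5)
Your proposal is correct and follows essentially the same route as the paper: verify $\mu=1/4$, $L=1$, check $\beta=3/4<\frac12\bigl(\frac{1}{8}+\sqrt{\frac{1}{64}+2}\bigr)\approx 0.77$ for global linear convergence via \cite{ghadimi2015global}, identify the two constant Jacobians on the open half-planes $\{x>0\}$ and $\{x<0\}$ to get $J_F(0,0,0)=\conv\{M_1,M_2\}$ with a zero $\theta$-column, and confirm the eigenvalue $-9/8$ of the four-fold product. The only (immaterial) differences are that your labels of $M_1$ and $M_2$ are swapped relative to the paper's --- which does not affect the spectrum of $M_1M_1M_2M_2$ by cyclicity --- and that you extract the eigenvalues from the trace and determinant rather than from the explicit lower-triangular product $\frac{-1}{32}\begin{pmatrix}36&0\\27&9\end{pmatrix}$ computed in the paper.
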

The presence of an eigenvalue with modulus greater than $1$ may produce divergence in \eqref{eq:autodiff}. Set
\begin{align*}
    f_1 \colon (x,\theta)  \mapsto
    \begin{cases}
         x^2/2 & \text{ if } x \geq 0\\
         x^2/8 & \text{ if } x < 0.
    \end{cases}\qquad
    f_2 \colon (x,\theta)  \mapsto
    \begin{cases}
        x^2/2 & \text{ if } x > 0\\
        x^2/8 & \text{ if } x \leq 0.
    \end{cases}
\end{align*}
Note that $f_1$ and $f_2$ are both equivalent to $f$ as they implement the same function.
With initializations $x(\theta) = y(\theta) = \theta$, we run a few iterations of the Heavy Ball algorithm for $\theta = 0$, and implement \eqref{eq:autodiff} alternating between two steps on $f_1$ and two steps on $f_2$ and differentiate the resulting sequence $(x_k)_{k\in \NN}$ with respect to $\theta$ using algorithmic differentiation. The divergence phenomenon predicted by \Cref{prop:hb-fail} is illustrated in Figure \ref{fig:heavyBallInstability}, while the true derivative is $0$ (the sequence is constant).

\section{Conclusion}
We have developed a  flexible theoretical framework to describe convergence of piggyback differentiation applied to nonsmooth recursions -- providing, in particular, a rigorous  meaning to  the differentiation of nonsmooth solvers. The relevance of our approach is illustrated on some major composite convex optimization problems through  widely used methods as forward-backward, Douglas-Rachford or ADMM algorithms. Our framework allows however to consider many other abstract algorithmic recursions and provides thus theoretical ground for more general problems such as variational inequalities or saddle point problems as in \cite{chambolle2021learning,bogensperger2022convergence}. As a matter for future work, we shall consider relaxing \Cref{ass:contraction} to study a wider class of methods, e.g., when $F$ is not a strict contraction.

\section*{Acknowledgments}
J. B. and E. P. acknowledge the financial support of the AI Interdisciplinary Institute ANITI funding under the grant agreement ANR-19-PI3A-0004, Air Force Office of Scientific Research, Air Force Material Command, USAF, under grant numbers FA9550-19-1-7026, and ANR MaSDOL 19-CE23-0017-01.
J. B. also acknowledges the support of ANR Chess, grant ANR-17-EURE-0010, TSE-P and the Centre Lagrange.
S. V. acknowledges the support ANR GraVa, grant ANR-18-CE40-0005.

\bibliographystyle{plain}
\bibliography{references}
\newpage
\appendix

This is the appendix for ``Convergence of piggyback differentiation of nonsmooth iterative solvers''.
\etocdepthtag.toc{mtappendix}
\etocsettagdepth{mtsection}{none}
\etocsettagdepth{mtappendix}{section}
\tableofcontents

\section{Properties of affine iterations on compact subsets}
\label{app:prop-aff-iter}

\subsection{Banach--Picard theorem: Proof of~\Cref{th:convergenceRecursion}}
\label{app:proof-fixed-point}
For a compact set, $\Zc$ we denote by $\|\Zc\|_{\sup}$ the maximal norm of elements in $\Zc$:
\begin{align*}
   \|\Zc\|_{\sup} = \sup_{z \in \Zc} \|z\|.
\end{align*}
In order to prove our fixed point result, we need first the following lemma.
\begin{lemma}[Bounding Hausdorff distances]
  Let $\Xc,\Yc, \Zc \subset \RR^p$ be nonempty compact sets, such that $\Xc \subset \Yc + \Zc$ and $\Yc \subset  \Xc + \Zc$ then
  \begin{align*}
    \dist(\Xc,\Yc) \leq \|\Zc\|_{\sup}
  \end{align*}
  \label{lem:hausdorffDifference}
\end{lemma}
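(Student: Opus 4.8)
The plan is to unwind the definitions of $\gap$ and $\dist$ and reduce the claim to a one-line estimate using the Minkowski-sum hypotheses. Recall that $\dist(\Xc,\Yc) = \max\bigl(\gap(\Xc,\Yc),\gap(\Yc,\Xc)\bigr)$, so it suffices to bound each of the two gaps separately by $\|\Zc\|_{\sup}$, and then take the maximum. By symmetry of the two hypotheses $\Xc \subset \Yc + \Zc$ and $\Yc \subset \Xc + \Zc$, it is enough to treat one of them, say to show $\gap(\Xc,\Yc) \leq \|\Zc\|_{\sup}$; the bound on $\gap(\Yc,\Xc)$ follows by swapping the roles of $\Xc$ and $\Yc$.

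Next I would fix an arbitrary $x \in \Xc$. Since $\Xc \subset \Yc + \Zc$, there exist $y \in \Yc$ and $z \in \Zc$ with $x = y + z$. Then
\begin{align*}
  d(x,\Yc) = \min_{y' \in \Yc} \|x - y'\| \leq \|x - y\| = \|z\| \leq \|\Zc\|_{\sup}.
\end{align*}
Since $x \in \Xc$ was arbitrary, taking the maximum over $x \in \Xc$ (attained by compactness of $\Xc$ and continuity of $x \mapsto d(x,\Yc)$, $\Yc$ being nonempty compact) yields $\gap(\Xc,\Yc) = \max_{x \in \Xc} d(x,\Yc) \leq \|\Zc\|_{\sup}$. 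The same argument applied to the inclusion $\Yc \subset \Xc + \Zc$ gives $\gap(\Yc,\Xc) \leq \|\Zc\|_{\sup}$, and combining the two bounds gives $\dist(\Xc,\Yc) = \max\bigl(\gap(\Xc,\Yc),\gap(\Yc,\Xc)\bigr) \leq \|\Zc\|_{\sup}$, as claimed.

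There is no real obstacle here: the statement is essentially a repackaging of the definition of the Hausdorff distance in terms of Minkowski sums, and compactness (plus nonemptiness) is only invoked to guarantee that the minima and maxima defining $d$, $\gap$ and $\|\cdot\|_{\sup}$ are attained, so that the chain of inequalities is literal rather than up to infima. The one point worth stating carefully is that the decomposition $x = y + z$ is witness-dependent — different $x$ may require different $z$ — but since every such $z$ lies in $\Zc$, the uniform bound $\|z\| \leq \|\Zc\|_{\sup}$ holds regardless, which is exactly what makes the argument go through.
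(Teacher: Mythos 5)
Your proof is correct and follows essentially the same route as the paper's: decompose each $x \in \Xc$ as $y+z$ via the Minkowski-sum hypothesis, bound $d(x,\Yc) \leq \|z\| \leq \|\Zc\|_{\sup}$, take the maximum over $x$, and conclude by symmetry. No discrepancies to report.
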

\begin{proof}
  The first inclusion says that for any $x \in \Xc$, there is $y(x) \in  \Yc$, $z(x) \in \Zc$ such that $x = y(x)+z(x)$. We deduce that for any $x \in \Xc$
  \begin{align*}
    \min_{y \in \Yc} \|x- y\| &= \min_{y \in \Yc} \|y(x) - z(x) - y\| \leq \|z(x)\| \leq  \max_{z \in \Zc} \|z\|\\
  \end{align*}
  Therefore, $\max_{x \in \Xc} \min_{y \in \Yc} \|x - y\| \leq \max_{z \in \Zc} \|z\|$. Symmetrically, $\max_{y \in \Yc} \min_{x \in \Xc} \|x - y\| \leq \max_{z \in \Zc} \|z\|$ and the result follows.
\end{proof}

We now prove~\Cref{th:convergenceRecursion}.

\begin{proof}[of~\Cref{th:convergenceRecursion}]
  Recall that the action of $\Jc$ on matrices is defined in \eqref{eq:actionJMatrix} and by $\Ac$ and $\Bc$ the projections of $\Jc$ on the first $p$ and last $l$ columns respectively, that is $\Ac = \{A,\, \exists B, \, [A,B] \in \Jc\}$ and similarly for $B$. Note that $\Ac$ is a compact set and that all matrices in $\Ac$ have an operator norm of at most $\rho$. We claim that the restriction of $\Jc$ to compact subsets is a strict contraction in Hausdorff metric. Indeed, for any $\Xc$, $\Yc$ compact subsets of $\RR^{p \times m}$, we have by using Lemma~\ref{lem:hausdorffDifference} and noting that $\Jc$ preserves the inclusion,
  \begin{align*}
    \Jc(\Xc) \subset \Jc(\Yc + \dist(\Xc, \Yc)\BB) \subset \Jc(\Yc) + {\dist(\Xc, \Yc)}\Ac \BB \subset\Jc(\Yc) + \rho{\dist(\Xc, \Yc)}\BB \\
    \Jc(\Yc) \subset \Jc(\Xc + \dist(\Xc, \Yc)\BB) \subset \Jc(\Xc) + {\dist(\Xc, \Yc)}\Ac \BB \subset\Jc(\Xc) + \rho{\dist(\Xc, \Yc)}\BB
  \end{align*}
  where the last inclusion follows because $\Ac \BB \subset \rho \BB$, where $\BB$ is the unit ball (for the Euclidean norm) of $p \times m$ matrices, since by assumption all square matrices in $\Ac$ have operator norm at most $\rho$.
  We deduce that $\dist(\Jc(\Xc), \Jc(\Yc)) \leq \rho \dist(\Xc, \Yc)$ using Lemma \ref{lem:hausdorffDifference}, that is the action of $\Jc$ on subsets of $p\times m$ matrices is $\rho$ Lipschitz with respect to Hausdorff metric.

  Let us show that $(\Xc_k)_{k \in \NN}$ remains in a bounded set, we have for all $k$
  \begin{align*}
    \|\Xc_{k+1}\|_{\sup} \leq \|\Ac\Xc_{k} + \Bc\|_{\sup} \leq \|\Ac\Xc_{k}\|_{\sup} + \|\Bc\|_{\sup} \leq  \rho \|\Xc_{k}\|_{\sup} + \|\Bc\|_{\sup},
  \end{align*}
  which entails
  \begin{align*}
    \|\Xc_{k+1}\|_{\sup} - \frac{\|\Bc\|_{\sup}}{1 - \rho}\leq  \rho\left( \|\Xc_k\|_{\sup} - \frac{\|\Bc\|_{\sup}}{1 - \rho} \right).
  \end{align*}
  We distinguish two cases
  \begin{itemize}
  \item if $\|\Xc_{k}\|_{\sup} > \frac{\|\Bc\|_{\sup}}{1 - \rho}$, then  $\|\Xc_{k+1}\|_{\sup}$ gets either closer to $\frac{\|\Bc\|_{\sup}}{1 - \rho}$ or below it, in particular it decreases.
  \item if $\|\Xc_{k}\|_{\sup} \leq \frac{\|\Bc\|_{\sup}}{1 - \rho}$ then $\|\Xc_{k+1}\|_{\sup}\leq \frac{\|\Bc\|_{\sup}}{1 - \rho}$ and we remain below this threshold for all $k$.

  \end{itemize}
  All in all, for all $k \in \NN$,
  \begin{align*}
    \|\Xc_{k+1}\|_{\sup} \leq \max\left\{\|\Xc_{k}\|_{\sup},\frac{\|\Bc\|_{\sup}}{1 - \rho} \right\} \leq \ldots \leq  \max\left\{\|\Xc_{0}\|_{\sup},\frac{\|\Bc\|_{\sup}}{1 - \rho} \right\},
  \end{align*}
  and $\lim\sup_k \|\Xc_{k}\|_{\sup} \leq \frac{\|\Bc\|_{\sup}}{1 - \rho}$.

  We have shown that the sequence remains in a bounded set so that the recursion actually takes place in a compact set $\Cc \subset \RR^{p \times m}$ which contains all the iterates in its interior, we consider the restriction of the topology to this subset. By \cite[Theorem 3.85]{aliprantis2006infinite}, the closed subsets form a complete metric space. The result is an application of Banach-Picard theorem (for example \cite[Section 10.3]{royden1988real}). In particular (see \cite[Theorem 3.88]{aliprantis2006infinite}), $\Lc$ is the unique fixed point of $\Jc$ and it is closed and bounded, hence compact. Note that we can consider larger compact sets to take into account larger initializations, the fixed point remains the same. Indeed for a larger compact $\tilde{\Cc}$ containing $\Cc$, $\Lc$ is in the interior of $\Cc$ and is still a fixed point of $\Jc$ when the topology is restricted to $\tilde{\Cc}$ and this fixed point must be unique.
\end{proof}

\subsection{Properties of the fixed-set mapping}
\label{app:prop-fix-map}
We now equip the set of matrices $\RR^{p \times (p \times m)}$ with the norm $\|[A,B]\|_{p,m} = \max\{\|A\|_{\mathrm{op}}, \|B\|\}$ where $A \in \RR^{p \times p}$ and $B \in \RR^{p \times m}$. The set of compact subsets of $\RR^{p \times (p +m)}$ is endowed with the corresponding Hausdorff distance.

\begin{definition}[Affine contraction sets]
    For $\rho\in[0,1)$, we denote by $\mathcal{C}_\rho$, the set of compact subsets of matrices in $\RR^{p \times (p +m)}$ such that for all $\mathcal{S} \subset \RR^{p \times (p+m)}$, $\mathcal{S} \subset \mathcal{C}_\rho$ and  all $M \in \mathcal{S}$, we have $\|A\|_{\mathrm{op}}\leq \rho$ where $A \in \RR^{p \times p}$ is the matrix made of the first $p$ columns of $M$.
    \label{def:affineContractionSet}
\end{definition}

Given $\Jc \in \mathcal{C}_\rho$, we denote by $\fixj$ the unique fixed point  of the corresponding iteration mapping as defined in Theorem \ref{th:convergenceRecursion}. We have the following
\begin{proposition}[Monotonicity of the fixed set]
  Given $\Jc \in \mathcal{C}_\rho$ and $\tilde{\Jc} \in \mathcal{C}_\rho$ (as in \Cref{def:affineContractionSet}),  such that $\Jc \subset \tilde{\Jc}$, we have
  \begin{align*}
    \fixj \subset \fix(\tilde{\Jc}).
  \end{align*}
  \label{prop:inclusion}
\end{proposition}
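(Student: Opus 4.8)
The plan is to exploit the fact, established in the proof of Theorem \ref{th:convergenceRecursion}, that the set-valued action of a compact matrix packet on compact subsets of $\RR^{p\times m}$ is a $\rho$-contraction in the Hausdorff metric, and that iterating from any nonempty compact initial set converges to the unique fixed set. So I would not try to compare the two fixed sets directly; instead I would run the two iterations in parallel from a \emph{common} initial set and pass to the limit. First I observe that since $\Jc\subset\tilde\Jc$, the action of $\Jc$ on any compact set $\Xc$ is contained in that of $\tilde\Jc$: indeed $\Jc(\Xc)=\{AX+B:[A,B]\in\Jc,\ X\in\Xc\}\subset\{AX+B:[A,B]\in\tilde\Jc,\ X\in\Xc\}=\tilde\Jc(\Xc)$, directly from the definition in \eqref{eq:actionJMatrix}. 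This is the only place the hypothesis $\Jc\subset\tilde\Jc$ enters, and it is immediate.

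Next I would fix a convenient common starting point — say $\Xc_0=\fixj$ itself (a nonempty compact set by Theorem \ref{th:convergenceRecursion}) — and define two sequences $\Xc_{k+1}=\Jc(\Xc_k)$ and $\tilde\Xc_{k+1}=\tilde\Jc(\tilde\Xc_k)$ with $\tilde\Xc_0=\Xc_0=\fixj$. Since $\fixj=\Jc(\fixj)$, the first sequence is constant: $\Xc_k=\fixj$ for all $k$. For the second, an easy induction using the inclusion of the previous paragraph together with the fact (noted in the proof of Theorem \ref{th:convergenceRecursion}) that $\tilde\Jc$ preserves set inclusions gives $\Xc_k\subset\tilde\Xc_k$ for all $k$: the base case is equality, and if $\Xc_k\subset\tilde\Xc_k$ then $\Xc_{k+1}=\Jc(\Xc_k)\subset\tilde\Jc(\Xc_k)\subset\tilde\Jc(\tilde\Xc_k)=\tilde\Xc_{k+1}$. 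Hence $\fixj\subset\tilde\Xc_k$ for every $k$.

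Finally I would pass to the limit: by Theorem \ref{th:convergenceRecursion} applied to $\tilde\Jc\in\mathcal C_\rho$, $\dist(\tilde\Xc_k,\fix(\tilde\Jc))\to0$, so in particular $\gap(\tilde\Xc_k,\fix(\tilde\Jc))\to0$, i.e.\ $\tilde\Xc_k\subset\fix(\tilde\Jc)+\varepsilon_k\BB$ with $\varepsilon_k\to0$. Combined with $\fixj\subset\tilde\Xc_k$ this yields $\fixj\subset\fix(\tilde\Jc)+\varepsilon_k\BB$ for all $k$, and letting $k\to\infty$ together with closedness of $\fix(\tilde\Jc)$ gives $\fixj\subset\fix(\tilde\Jc)$, as claimed. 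The argument is essentially a monotonicity-plus-closure bookkeeping, so there is no real obstacle; the only point requiring a little care is making sure that both iterations genuinely fall under the scope of Theorem \ref{th:convergenceRecursion} — i.e.\ that $\fixj$ is a legitimate (nonempty, compact) initializer and that $\tilde\Jc$ indeed lies in $\mathcal C_\rho$ with the \emph{same} $\rho<1$, which is exactly the standing hypothesis $\Jc,\tilde\Jc\in\mathcal C_\rho$. An alternative, even shorter route is to note that $\fixj=\Jc(\fixj)\subset\tilde\Jc(\fixj)$ exhibits $\fixj$ as a set with $\fixj\subset\tilde\Jc(\fixj)$, and then show that any such "subinvariant" compact set is contained in $\fix(\tilde\Jc)$ by iterating $\tilde\Jc$ and using the contraction; this is really the same proof repackaged.
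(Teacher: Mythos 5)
Your proposal is correct and follows essentially the same route as the paper: the paper's proof also sets $\Xc_0=\fix(\Jc)$, observes $\Xc_0=\Jc(\Xc_0)\subset\tilde{\Jc}(\Xc_0)$, and then concludes by iterating $\tilde{\Jc}$ and passing to the limit, which is exactly the induction-plus-closedness bookkeeping you spell out. The only difference is that the paper leaves that last step as a one-line reference to the argument in the proof of Theorem~\ref{th:convergenceRecursion}, whereas you write it out in full.
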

\begin{proof}
  Setting $\Xc_0 = \fix (\Jc)$, we have
  \begin{align*}
    \Xc_0 = \Jc(\Xc_0) \subset \tilde{\Jc}(\Xc_0),
  \end{align*}
  and the result follows by the same argument as in the last paragraph of the proof of Theorem \ref{th:convergenceRecursion}.
\end{proof}

\begin{proposition}[The fixed-set mapping is locally Lipschitz continuous]
  The function $\fix$ is locally Lipschitz continuous on $\mathcal{C}_\rho$ (as in \Cref{def:affineContractionSet}). More precisely, for any $\Jc_0 \in \mathcal{C}_\rho$ and $\Jc \in \Cc_\rho$,
  \begin{align*}
    \dist\left(\fix (\Jc_0),\fix (\Jc)\right) \leq \left(\frac{1}{1- \rho} + \frac{\sup_{[A_0,B_0] \in \Jc_0} \|B_0\| }{(1 - \rho)^2}\right) \dist(\Jc_0, \Jc)
  \end{align*}
  \label{prop:lipschitzContinuity}
\end{proposition}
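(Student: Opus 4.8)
The plan is to compare the fixed points $\Lc_0 := \fix(\Jc_0)$ and $\Lc := \fix(\Jc)$ directly, exploiting the contraction property established in \Cref{th:convergenceRecursion} together with a one-step estimate measuring how far $\Jc(\Lc_0)$ deviates from $\Jc_0(\Lc_0) = \Lc_0$. First I would set $\delta = \dist(\Jc_0,\Jc)$ and observe, via the triangle-type inequality for Hausdorff distance and the fact that $\Jc$ is a $\rho$-contraction on compact subsets (shown in the proof of \Cref{th:convergenceRecursion}), that
\begin{align*}
  \dist(\Lc_0,\Lc) = \dist\big(\Jc_0(\Lc_0), \Jc(\Lc)\big)
  \leq \dist\big(\Jc_0(\Lc_0), \Jc(\Lc_0)\big) + \dist\big(\Jc(\Lc_0),\Jc(\Lc)\big)
  \leq \dist\big(\Jc_0(\Lc_0), \Jc(\Lc_0)\big) + \rho\,\dist(\Lc_0,\Lc),
\end{align*}
so that $\dist(\Lc_0,\Lc) \leq \frac{1}{1-\rho}\dist\big(\Jc_0(\Lc_0),\Jc(\Lc_0)\big)$. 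Everything then reduces to bounding the one-step discrepancy $\dist\big(\Jc_0(\Lc_0),\Jc(\Lc_0)\big)$ in terms of $\delta$ and the size of $\Lc_0$.

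The key step is therefore a pointwise estimate: for $[A_0,B_0]\in\Jc_0$ and $X\in\Lc_0$, pick $[A,B]\in\Jc$ with $\|[A_0,B_0]-[A,B]\|_{p,m}\leq\delta$ (possible up to the definition of the Hausdorff distance, and one handles both directions symmetrically), and write
\begin{align*}
  \|(A_0 X + B_0) - (A X + B)\|
  \leq \|A_0 - A\|_{\mathrm{op}}\,\|X\| + \|B_0 - B\|
  \leq \delta\,\|X\| + \delta.
\end{align*}
Taking the supremum over $X\in\Lc_0$ gives $\dist\big(\Jc_0(\Lc_0),\Jc(\Lc_0)\big) \leq \delta\big(\|\Lc_0\|_{\sup} + 1\big)$. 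It remains to bound $\|\Lc_0\|_{\sup}$: since $\Lc_0 = \Jc_0(\Lc_0)$, every $X\in\Lc_0$ satisfies $X = A_0 X' + B_0$ for some $X'\in\Lc_0$ and $[A_0,B_0]\in\Jc_0$, hence $\|X\|\leq \rho\|\Lc_0\|_{\sup} + \sup_{[A_0,B_0]\in\Jc_0}\|B_0\|$, yielding $\|\Lc_0\|_{\sup}\leq \frac{1}{1-\rho}\sup_{[A_0,B_0]\in\Jc_0}\|B_0\|$ (this is the bound already appearing, with $\Bc$, in the proof of \Cref{th:convergenceRecursion}).

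Assembling the three inequalities gives
\begin{align*}
  \dist(\Lc_0,\Lc) \leq \frac{1}{1-\rho}\Big(1 + \tfrac{\sup_{[A_0,B_0]\in\Jc_0}\|B_0\|}{1-\rho}\Big)\delta
  = \Big(\frac{1}{1-\rho} + \frac{\sup_{[A_0,B_0]\in\Jc_0}\|B_0\|}{(1-\rho)^2}\Big)\dist(\Jc_0,\Jc),
\end{align*}
which is the claimed bound; local Lipschitz continuity follows because the coefficient depends only on $\Jc_0$ (and is locally bounded in $\Jc_0$ since $\sup\|B_0\|$ varies continuously with $\Jc_0$ in Hausdorff distance). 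The main obstacle I anticipate is bookkeeping the asymmetry of the gap/inclusion estimates: one must run the pointwise argument in both directions ($\Jc_0(\Lc_0)$ approximated from $\Jc(\Lc_0)$ and vice versa) and be careful that the "pick a nearby matrix within $\delta$" step is justified by $\gap$, not just $\dist$ — but since we control the full Hausdorff distance $\dist(\Jc_0,\Jc)$ this is routine. A minor subtlety is that strictly speaking $\dist(\Jc_0(\Lc_0),\Jc(\Lc_0))$ should be handled so that the same fixed set $\Lc_0$ is used on both sides, which is exactly what makes the telescoping through $\rho$ work; this is why I compare $\Jc_0(\Lc_0)$ with $\Jc(\Lc_0)$ rather than, say, $\Jc_0(\Lc)$ with $\Jc(\Lc)$.
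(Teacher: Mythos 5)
Your proposal is correct and follows essentially the same route as the paper's proof: both reduce the problem to the one-step discrepancy $\dist\bigl(\fix(\Jc_0),\Jc(\fix(\Jc_0))\bigr)\leq \dist(\Jc_0,\Jc)\,(1+\Vert\fix(\Jc_0)\Vert_{\sup})$ and then apply the Banach a priori estimate $\dist(\Xc_0,\fix(\Jc))\leq \dist(\Xc_0,\Jc(\Xc_0))/(1-\rho)$ (which the paper obtains by invoking \Cref{th:convergenceRecursion} at $k=0$ and you rederive via the triangle inequality and the contraction property), together with the bound $\Vert\fix(\Jc_0)\Vert_{\sup}\leq \sup_{[A_0,B_0]\in\Jc_0}\Vert B_0\Vert/(1-\rho)$. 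The only cosmetic difference is that the paper phrases the pointwise estimate through set inclusions and \Cref{lem:hausdorffDifference} rather than through an explicit choice of a nearby matrix, but the content is identical.
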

\begin{proof}
  Given $\Jc_0 \in \mathcal{C}_\rho$ and $\Jc\in \mathcal{C}_\rho$, we remark that $\Jc \subset \Jc_0 + \dist(\Jc_0, \Jc) \mathbb{B}_{pm}$, where $\dist$ and $\mathbb{B}_{pm}$ are considered with respect to the norm $\|\cdot\|_{pm}$. This means
  \begin{align*}
    \Jc \subset \left\{ [A_0,B_0] + [C,D],\, [A_0,B_0]\in \Jc_0, \|[C,D]\|_{p,m} \leq \dist(\Jc_0, \Jc) \right\}
  \end{align*}
  We have
  \begin{align*}
    \Jc(\fix (\Jc_0))  =\;& \left\{AX + B, \, [A,B] \in \Jc, \, X \in \fix (\Jc_0) \right\} \\
    \subset\;& \left\{A_0X + B_0, \, [A_0,B_0] \in \Jc_0, \, X \in \fix (\Jc_0) \right\} \\
                          &+  \left\{CX + D, \, \|[C,D]\|_{mp} \leq \dist(\Jc_0, \Jc) , \, X \in \fix (\Jc_0)\right\}\\
    =\;& {\Jc_0} (\fix (\Jc_0)) +  \left\{CX + D, \, \|[C,D]\|_{mp} \leq \dist(\Jc_0, \Jc) , \, X \in \fix (\Jc_0)\right\}\\
    =\;& \fix (\Jc_0) +  \left\{CX + D, \, \|[C,D]\|_{mp} \leq \dist(\Jc_0, \Jc) , \, X \in \fix (\Jc_0)\right\}.
  \end{align*}
  This sets one inclusion. Similarly, we have
  \begin{align*}
    \fix (\Jc_0) =\;& {\Jc_0}(\fix (\Jc_0))\\
    \subset\;& {\Jc} (\fix (\Jc_0)) +  \left\{CX + D, \, \|[C,D]\|_{mp} \leq \dist(\Jc_0, \Jc) , \, X \in \fix (\Jc_0)\right\}.\\
  \end{align*}
  Recall that $\|[C,D]\|_{mp} = \max\{\|C\|_{\mathrm{op}}, \|D\|\}$, we have for any $[C,D]$ with $\|[C,D]\|_{mp} \leq \dist(\Jc_0, \Jc)$ and  $X \in \fix (\Jc_0)$,
  \begin{align*}
    \Vert CX + D\Vert \leq \|C\|_{\op} \Vert \fix (\Jc_0) \Vert_{\sup} + \|D\| \leq \dist(\Jc_0, \Jc) (1 + \Vert \fix (\Jc_0) \Vert_{\sup}).
  \end{align*}
  We deduce using Lemma \ref{lem:hausdorffDifference} that $\dist(\fix (\Jc_0), \Jc(\fix (\Jc_0 ))) \leq \dist(\Jc_0, \Jc)  (1 + \Vert \fix (\Jc_0) \Vert_{\sup})$. Setting $\Xc_0 = \fix (\Jc_0)$, invoking Theorem \ref{th:convergenceRecursion} with $\Jc$ and $k = 0$, we have
  \begin{align*}
    \dist(\fix (\Jc_0),\fix (\Jc)) &\leq \frac{\dist(\Jc_0, \Jc)  (1 + \Vert \fix (\Jc_0) \Vert_{\sup})}{1 - \rho} \\
                                   &\leq  \dist(\Jc_0, \Jc) \frac{ (1 - \rho + \sup_{[A_0,B_0] \in \Jc_0} \|B_0\| )}{(1 - \rho)^2} .
  \end{align*}
\end{proof}

\subsection{Perturbed iterations}
\label{app:perturbed-iter}
The following proposition shows that the linear convergence property is actually stable to perturbations. It will be useful to show that all potential limits of unrolling algorithmic differentiation recursions are contained in the corresponding fixed point set.
\begin{proposition}[Perturbed set sequences]
  Let $\rho < 1$ and $\epsilon > 0$ such that $\rho + \epsilon <1$.
  Let $(\Jc_k)_{k \in \NN}$ be a sequence in $\Cc_{\rho + \epsilon}$ and $\bar{\Jc} \in \Cc_\rho$ (as in \Cref{def:affineContractionSet}). Assume that for all $k \in \NN$
  \begin{align*}
    \gap_{pm} (\Jc_k,\bar{\Jc})  \leq \epsilon
  \end{align*}
  or in other words $\Jc_k \subset \bar{\Jc} + \epsilon \mathbb{B}_{pm}$ where $\mathbb{B}_{pm}$ is the unit ball of the norm $\|\cdot\|_{pm}$.
  Then the recursion on compact sets
  \begin{align*}
    \Xc_{k+1} = {\Jc_k} (\mathcal{X}_k)
  \end{align*}
  satisfies $ $ for all $k \in \NN$
  \begin{align*}
    &\gap(\Xc_k,\fixj) \\
    \leq\quad & (\rho+\epsilon)^{k} \frac{(1+\rho+\epsilon) \|\Xc_0\|_{\sup} + \sup_{[A,B] \in \bar{\Jc}} \|B\| + \epsilon}{1 - \rho- \epsilon} + \epsilon\frac{(1 - \rho+ \sup_{[A,B] \in \bar{\Jc}} \|B\|)}{(1 - \rho)^2}.
  \end{align*}
  In other words, $\Xc_k \subset \fix (\bar{\Jc}) + C(\rho, \epsilon, k) \BB$ where $C(\rho, \epsilon, k)$ is the constant above.
  \label{prop:perturbedConvergence}
\end{proposition}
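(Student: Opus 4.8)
The plan is to mimic the contraction argument used in the proof of \Cref{th:convergenceRecursion}, but now keeping track of the extra slack $\epsilon$ coming from the perturbation $\Jc_k \subset \bar\Jc + \epsilon\BB_{pm}$. First I would establish the boundedness of the iterates $(\Xc_k)$: writing $\Jc_k = \bar\Jc + (\Jc_k - \bar\Jc)$ and using that every $[A,B]\in\Jc_k$ has $\|A\|_{\op}\leq \rho+\epsilon$, one gets $\|\Xc_{k+1}\|_{\sup} \leq (\rho+\epsilon)\|\Xc_k\|_{\sup} + \sup_{[A,B]\in\bar\Jc}\|B\| + \epsilon$, hence by a geometric-series bound (as in the two-cases argument of \Cref{th:convergenceRecursion}) one controls $\|\Xc_k\|_{\sup}$ uniformly, and in particular $\|\Xc_k\|_{\sup} \leq \max\{\|\Xc_0\|_{\sup}, (\sup_{\bar\Jc}\|B\|+\epsilon)/(1-\rho-\epsilon)\}$. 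Actually for the stated bound it is cleaner to keep the cruder estimate $\|\Xc_k\|_{\sup}\leq \|\Xc_0\|_{\sup} + (\text{stuff})$; I will calibrate constants to land on the displayed expression.

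Next, the core estimate. Let $\Lc := \fixj = \bar\Jc(\Lc)$. I would bound $\gap(\Xc_{k+1},\Lc)$ by inserting $\bar\Jc(\Xc_k)$ as an intermediate set and using the triangle inequality for $\gap$:
\begin{align*}
  \gap(\Xc_{k+1},\Lc) = \gap(\Jc_k(\Xc_k),\bar\Jc(\Lc)) \leq \gap(\Jc_k(\Xc_k),\bar\Jc(\Xc_k)) + \gap(\bar\Jc(\Xc_k),\bar\Jc(\Lc)).
\end{align*}
For the second term, since $\bar\Jc\in\Cc_\rho$ acts as a $\rho$-contraction on compact sets (Hausdorff, hence also for $\gap$ via the monotonicity $\Xc_k \subset \Lc + \gap(\Xc_k,\Lc)\BB$ and $\bar\Jc(\Lc+r\BB)\subset \bar\Jc(\Lc)+\rho r\BB$), this is $\leq \rho\,\gap(\Xc_k,\Lc)$. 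For the first term, I would use $\Jc_k \subset \bar\Jc + \epsilon\BB_{pm}$: for any $[A,B]\in\Jc_k$ there is $[A_0,B_0]\in\bar\Jc$ with $\|A-A_0\|_{\op}\leq\epsilon$, $\|B-B_0\|\leq\epsilon$, so for $X\in\Xc_k$, $\|(AX+B)-(A_0X+B_0)\| \leq \epsilon\|X\| + \epsilon \leq \epsilon(\|\Xc_k\|_{\sup}+1)$, giving $\gap(\Jc_k(\Xc_k),\bar\Jc(\Xc_k)) \leq \epsilon(1+\|\Xc_k\|_{\sup})$. Combining and unrolling the recursion $u_{k+1}\leq \rho u_k + \epsilon(1+\|\Xc_k\|_{\sup})$ with the uniform bound on $\|\Xc_k\|_{\sup}$ yields a geometric term $(\rho+\epsilon)^k(\cdots)$ plus a steady-state term $\epsilon(1-\rho+\sup_{\bar\Jc}\|B\|)/(1-\rho)^2$ — where the last constant is recognized as (a piece of) the local Lipschitz constant of $\fix$ from \Cref{prop:lipschitzContinuity}, which is the natural ``size'' of the asymptotic displacement caused by an $\epsilon$-perturbation of $\bar\Jc$.

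The main obstacle is bookkeeping rather than conceptual: one must be careful that $\gap$ (not the full Hausdorff distance) suffices throughout — we only need $\Xc_k$ to be approximately \emph{inside} $\Lc$, not the reverse inclusion — and that the $\BB$ appearing in $\Jc_k\subset\bar\Jc+\epsilon\BB_{pm}$ is with respect to the norm $\|\cdot\|_{pm}=\max\{\|A\|_{\op},\|B\|\}$, so that it splits correctly into an operator-norm bound on the $A$-part and a Frobenius bound on the $B$-part when the matrices act on $X\in\Xc_k$. A secondary point is that $\Jc_k\in\Cc_{\rho+\epsilon}$ is exactly what licenses the boundedness step and the $(\rho+\epsilon)^k$ geometric decay; I would state this reliance explicitly. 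With the recursion $u_{k+1}\leq\rho u_k + \epsilon(1+\|\Xc_k\|_{\sup})$ solved and the constants matched, the displayed inequality follows, and restating it as $\Xc_k\subset\fix(\bar\Jc)+C(\rho,\epsilon,k)\BB$ is immediate from $\gap(\Xc_k,\fixj)=\|\cdot\|$ default-of-inclusion characterization recalled in the ``Gap and Hausdorff distance'' paragraph.
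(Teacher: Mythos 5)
Your proposal is correct in substance but follows a genuinely different route from the paper. The paper's proof is a domination argument: it introduces the fattened autonomous packet $\Jc_\epsilon := \bar\Jc + \epsilon\,\mathbb{B}_{pm}$, observes that $\Jc_k \subset \Jc_\epsilon$ implies $\Xc_k \subset \tilde\Xc_k$ where $\tilde\Xc_{k+1}=\Jc_\epsilon(\tilde\Xc_k)$, applies \Cref{th:convergenceRecursion} to the $\Jc_\epsilon$-iteration to get the geometric term, and then transfers from $\fix(\Jc_\epsilon)$ to $\fix(\bar\Jc)$ using the Lipschitz continuity of the fixed-set map (\Cref{prop:lipschitzContinuity}), which is exactly where the steady-state term $\epsilon(1-\rho+\sup\|B\|)/(1-\rho)^2$ comes from. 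You instead run a direct one-step perturbation analysis, splitting $\gap(\Jc_k(\Xc_k),\bar\Jc(\fix(\bar\Jc)))$ via the directed triangle inequality into a $\rho$-contraction term and an $\epsilon(1+\|\Xc_k\|_{\sup})$ perturbation term, and unroll the scalar recursion $u_{k+1}\le\rho u_k+\epsilon(1+\|\Xc_k\|_{\sup})$. All of your individual estimates are sound (the directed triangle inequality for $\gap$, the contraction bound, the splitting of $\|\cdot\|_{pm}$ into operator-norm and Frobenius parts). The paper's version buys brevity by reusing \Cref{th:convergenceRecursion} and \Cref{prop:lipschitzContinuity} wholesale; yours is more elementary and self-contained, and even gives the sharper rate $\rho^k$ rather than $(\rho+\epsilon)^k$ in the leading term. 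The one caveat is that "calibrating constants to land on the displayed expression" is not automatic: since $\|\Xc_k\|_{\sup}$ is not constant, inserting its uniform bound $\max\{\|\Xc_0\|_{\sup},(\sup\|B\|+\epsilon)/(1-\rho-\epsilon)\}$ into the unrolled sum either leaks $\|\Xc_0\|_{\sup}$ into the steady-state term or, if you split the sum, produces a steady-state denominator $(1-\rho)(1-\rho-\epsilon)$ in place of $(1-\rho)^2$, which is slightly weaker than the displayed constant. This is cosmetic for the downstream uses (\Cref{cor:perturbedConvergence}, \Cref{cor:unrollingConvergence}), which only need the qualitative form "geometric term plus $O(\epsilon)$", but you should either state the bound you actually obtain or switch to the domination argument if you want the literal constants.
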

\begin{proof}
  Set $\Jc_\epsilon := \{J + [C,D],\, J \in \bar{\Jc},\, \|[C,D]\|_{mp} \leq \epsilon\}$. Denote by $(\tilde{\Xc}_k)_{k\in\NN}$ the sequence satisfying the recursion, $\tilde{\Xc}_{k+1} = \Jc_\epsilon(\tilde{\Xc}_{k})$ with $\Xc_0 = \tilde{\Xc}_0$. We have
  \begin{align*}
    \Xc_1 =  \bar\Jc(\Xc_0) \subset \Jc_\epsilon(\Xc_0) = \tilde{\Xc}_1
  \end{align*}
  and by recursion $\Xc_k \subset \tilde{\Xc}_k$ for all $k \in \NN$. By Theorem \ref{th:convergenceRecursion}, we have
  \begin{align*}
    \dist(\tilde{\Xc}_k, \fix (\Jc_\epsilon))& \leq (\rho+\epsilon)^{k} \frac{\dist(\Xc_0, \Jc_\epsilon(\Xc_0))}{1 - \rho- \epsilon}.
  \end{align*}
  We deduce from Proposition \ref{prop:lipschitzContinuity} that for all $k \in \NN$,
  \begin{align*}
    &\dist(\tilde{\Xc}_k, \fix (\bar{\Jc}))\\
    \leq \quad & \dist(\tilde{\Xc}_k, \fix (\Jc_\epsilon)) + \dist(\fix (\Jc_\epsilon), \fix (\bar{\Jc}))\\
    \leq \quad & (\rho+\epsilon)^{k} \frac{\dist(\Xc_0, \Jc_\epsilon(\Xc_0))}{1 - \rho- \epsilon} + \frac{ (1 - \rho + \sup_{[A,B] \in \bar{\Jc}} \|B\| )}{(1 - \rho)^2} \dist(\Jc_\epsilon, \bar{\Jc}) \\
    \leq\quad & (\rho+\epsilon)^{k} \frac{(1+\rho+\epsilon) \|\Xc_0\|_{\sup} + \sup_{[A,B] \in \bar{\Jc}} \|B\| + \epsilon}{1 - \rho- \epsilon} + \frac{(1 - \rho+ \sup_{[A,B] \in \bar{\Jc}} \|B\|)}{(1 - \rho)^2}\,\epsilon.
  \end{align*}
  And the result follows because
  \begin{align*}
    \max_{X \in \Xc_k} \min_{L \in \fix (\bar{\Jc}) } \|X - L\| \leq \max_{X \in \tilde{\Xc}_k} \min_{L \in \fix (\bar{\Jc}) } \|X - L\|  \leq \dist(\tilde{\Xc}_k, \fix (\bar{\Jc})).
  \end{align*}
\end{proof}

This allows to obtain explicit convergence results as follows
\begin{corollary}[Limit of iterations with vanishing perturbations]
  Let $\rho < 1$ and $\bar{\Jc} \in \Cc_\rho$ (as in \Cref{def:affineContractionSet}).
  Let $(\Jc_k)_{k \in \NN}$ be a sequence of matrices such that for all $k \in \NN$
  \begin{align*}
    \gap_{pm}( \Jc_k,\bar{\Jc})  \leq \epsilon_k
  \end{align*}
  where $(\epsilon_k)_{k \in \NN}$ is a positive sequence such that there exists a constant $a>0$ such that $\epsilon_k \leq a\rho^{k}$ for all $k \in \NN$.
  Then for the recursion on compact sets of $p \times m$ matrices
  \begin{align*}
    \Xc_{k+1} = {\Jc_k} (\mathcal{X}_k)
  \end{align*}
  There are constants $C,c>0$ such that for all $k \in \NN$
  \begin{align*}
    &\gap(\Xc_k, \fixj )\leq Ce^{-ck}.
  \end{align*}
	Furthermore, one can take $c = \log\left(\frac{1}{\sqrt{ \rho + \epsilon}  }\right)$ for arbitrary $\epsilon>0$.
  \label{cor:perturbedConvergence}
\end{corollary}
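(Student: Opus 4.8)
The plan is to derive \Cref{cor:perturbedConvergence} as a direct quantitative consequence of \Cref{prop:perturbedConvergence}. First I would fix an arbitrary $\epsilon > 0$ small enough that $\rho + \epsilon < 1$, and observe that since $\epsilon_k \le a\rho^k \to 0$, there is an index $k_0$ beyond which $\epsilon_k \le \epsilon$; hence the tail sequence $(\Jc_k)_{k \ge k_0}$ lies in $\Cc_{\rho+\epsilon}$ and satisfies $\gap_{pm}(\Jc_k,\bar\Jc) \le \epsilon$. Reindexing from $k_0$ (which only changes the leading constant, not the rate), \Cref{prop:perturbedConvergence} applies and gives $\gap(\Xc_k,\fixj) \le (\rho+\epsilon)^{k-k_0} C_1 + C_2 \epsilon$ for the explicit constants $C_1, C_2$ appearing there. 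This already shows the gap is eventually small, but the additive term $C_2\epsilon$ does not vanish with $k$, so a single application of the proposition is not enough.

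The key idea to remove the residual term is to let $\epsilon$ depend on $k$: apply \Cref{prop:perturbedConvergence} with a window that shrinks as we look further out. Concretely, I would split the iteration at a midpoint $\lfloor k/2 \rfloor$: run the recursion from $0$ to $\lfloor k/2\rfloor$ using the crude bound that $(\Xc_j)$ stays in a fixed compact set (from the boundedness argument inside the proof of \Cref{th:convergenceRecursion}, together with the uniform bound $\epsilon_k \le a$), then restart the analysis at $j = \lfloor k/2 \rfloor$ with the improved tail bound $\sup_{j \ge \lfloor k/2\rfloor}\epsilon_j \le a\rho^{\lfloor k/2\rfloor}$ playing the role of ``$\epsilon$'' in \Cref{prop:perturbedConvergence}. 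The proposition then contributes a term $(\rho + a\rho^{\lfloor k/2\rfloor})^{k - \lfloor k/2\rfloor}\cdot(\text{const})$ plus an additive term of order $a\rho^{\lfloor k/2\rfloor}$; both decay at a rate governed by $\sqrt\rho$ up to a subexponential correction. Choosing the split fraction and bookkeeping the constants yields $\gap(\Xc_k,\fixj) \le Ce^{-ck}$ with $c = \log(1/\sqrt{\rho+\epsilon})$ for any prescribed $\epsilon > 0$, which is exactly the claimed bound; taking $\epsilon \to 0$ shows $\sqrt\rho$ is the effective rate.

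For the stated constant $c = \log(1/\sqrt{\rho+\epsilon})$ it suffices to be slightly less delicate: pick any $\epsilon>0$ with $\rho+\epsilon<1$, let $k_0$ be the first index with $a\rho^{k_0} \le \epsilon$, and for $k \ge 2k_0$ apply \Cref{prop:perturbedConvergence} starting from step $\lfloor k/2 \rfloor \ge k_0$ so that the perturbation parameter there is $\le a\rho^{\lfloor k/2\rfloor} \le \epsilon$; the proposition's bound then has leading term $(\rho+\epsilon)^{\lceil k/2\rceil}(\text{const})$ and additive term $\le \frac{(1-\rho+\sup\|B\|)}{(1-\rho)^2}a\rho^{\lfloor k/2\rfloor}$, and since $(\rho+\epsilon)^{\lceil k/2 \rceil} \le (\rho+\epsilon)^{k/2} = e^{-ck}$ and $\rho^{\lfloor k/2\rfloor} \le (\rho+\epsilon)^{k/2-1}$, everything collapses into $Ce^{-ck}$ after absorbing constants; for $k < 2k_0$ one simply enlarges $C$. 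The main obstacle is purely bookkeeping: one must track that the boundedness of $\|\Xc_j\|_{\sup}$ used to initialize the restarted recursion is uniform in $j$ — this follows from the argument in the proof of \Cref{th:convergenceRecursion} applied with contraction factor $\rho + a$ (valid since all $\Jc_j \in \Cc_{\rho+a}$), giving $\|\Xc_j\|_{\sup} \le \max\{\|\Xc_0\|_{\sup}, \sup_j\sup_{[A,B]\in\Jc_j}\|B\|/(1-\rho-a)\}$ — and then to verify the constants assembled from \Cref{prop:perturbedConvergence} remain finite; no genuinely new estimate is required.
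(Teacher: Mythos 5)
Your proposal is correct and follows essentially the same route as the paper: restart \Cref{prop:perturbedConvergence} at the midpoint, where the tail perturbation level is already of order $\rho^{k/2}$, so that both the geometric term and the residual additive term decay like $(\rho+\epsilon)^{k/2}$, yielding the rate $c=\log(1/\sqrt{\rho+\epsilon})$. One small caution: the uniform bound on $\|\Xc_j\|_{\sup}$ should be justified with the contraction factor $\rho+\epsilon<1$ available after your reindexing at $k_0$ (the first $k_0$ steps trivially preserve compactness), rather than with $\rho+a$, which need not be below $1$.
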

\begin{proof}
				We consider $K\in \NN$ such that $\epsilon_k \leq \epsilon$ for all $k \in \NN$ where $\epsilon + \rho < 1$.
				Without loss of generality, we may assume that $K = 0$.
				Using the same notations as in the proof of Proposition \ref{prop:perturbedConvergence}, we have $\Xc_k \subset \tilde{\Xc}_k$ for all $k \in \NN$. Furthermore, it follows from the same arguments as in the proof of Theorem 1 that
  \begin{align}
    \|\Xc_k\|_{\sup} \leq \|\tilde{\Xc}_k\|_{\sup} \leq  M,
    \label{eq:perturbedConvervgenceTemp1}
	\end{align}
	for a constant $M > 0$.
  Now choose $k \in \NN$, applying Proposition \ref{prop:perturbedConvergence} shifting the initialization $0$ to $k$, we have for all $m \in \NN$
  \begin{align*}
    &\max_{X \in \Xc_{k+m}} \min_{L \in \fixj } \|X - L\| \\
    \leq\quad & (\rho+\epsilon_k)^{m} \frac{(1+\rho+\epsilon_k) \|\Xc_k\|_{\sup} + \sup_{[A,B] \in \bar{\Jc}} \|B\| + \epsilon_k}{1 - \rho- \epsilon_k} + \epsilon_k\frac{(1 - \rho+ \sup_{[A,B] \in \bar{\Jc}} \|B\|)}{(1 - \rho)^2}\\
    \leq\quad & (\rho+\epsilon)^{m} \frac{(1+\rho+\epsilon) M+ \sup_{[A,B] \in \bar{\Jc}} \|B\| + \epsilon}{1 - \rho- \epsilon} + a \rho^{k}\frac{(1 - \rho+ \sup_{[A,B] \in \bar{\Jc}} \|B\|)}{(1 - \rho)^2},
  \end{align*}
  where we have used the bound \eqref{eq:perturbedConvervgenceTemp1} and the fact that $\epsilon_k \leq \epsilon$ and $\epsilon_k \leq a \rho^k$.
  Setting $u = \frac{(1+\rho+\epsilon) M+ \sup_{[A,B] \in \bar{\Jc}} \|B\| + \epsilon}{1 - \rho- \epsilon}$ and $v = a \frac{(1 - \rho+ \sup_{[A,B] \in \bar{\Jc}} \|B\|)}{(1 - \rho)^2}$
  we have
  \begin{align*}
    \max_{X \in \Xc_{2k}} \min_{L \in \fixj } \|X - L\| \leq\quad & u(\rho+\epsilon)^{k}  + v \rho^{k} \leq (u+v)(\rho + \epsilon)^{2k/2} \leq \frac{u+v}{(\rho + \epsilon)^{1/2}}(\rho + \epsilon)^{2k/2},\\
    \max_{X \in \Xc_{2k+1}} \min_{L \in \fixj } \|X - L\| \leq\quad & u(\rho+\epsilon)^{k+1}  + v \rho^{k} \leq \frac{u+v}{(\rho + \epsilon)^{1/2}}(\rho + \epsilon)^{(2k+1)/2}.
  \end{align*}
  Since $k$ was arbitrary, this proves the desired result.
\end{proof}

\section{Existence of a conservative Jacobian for autodiff}
\label{app:cons-jac}

\subsection{Regularity of $\jalg$}
We recall the main notations and elements of \Cref{ass:contraction}.
We assume that $F$ is locally Lipschitz, path differentiable, and denote by $J_F\colon \RR^{p+m} \rightrightarrows \RR^{p \times (p+m)}$ a conservative Jacobian for $F$.
Now assume that any pair $[A,B] \in J_F(x,\theta)$ is such that the operator norm of $A$ is at most $\rho < 1$, that is for all $x$ and $\theta$, $J_F(x,\theta) \in \Cc_\rho$ (as in \Cref{def:affineContractionSet}). Define the following set-valued map
\begin{align*}
  \jalg \colon \theta \rightrightarrows \fix\left[
  J_F(\bar{x}(\theta), \theta)\right].
\end{align*}
Here, $\bar{x}(\theta) = \fix(F_\theta)$ is the unique fixed point of the algorithmic recursion so that we actually have
  \begin{align*}
    \jalg \colon \theta \rightrightarrows \fix\left[
    J_F(\fix(F_\theta), \theta)\right].
  \end{align*}

We have the following
\begin{lemma}[Regularity of $\jalg$]
  The mapping $\jalg$ is nonempty valued, locally bounded and has a closed graph.
  \label{lem:regularityLimit}
\end{lemma}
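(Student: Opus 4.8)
\textbf{Proof plan for \Cref{lem:regularityLimit}.}

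The plan is to verify the three properties — nonempty-valuedness, local boundedness, and closedness of the graph — in turn, leaning on the results already established for the fixed-set mapping $\fix$ and on the structural hypotheses of \Cref{ass:contraction}.

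First, \emph{nonempty-valuedness} is immediate: for each fixed $\theta$, the set $J_F(\bar x(\theta),\theta)$ is a nonempty, compact subset of matrices in which every $[A,B]$ has $\|A\|_{\op}\le\rho<1$, i.e.\ $J_F(\bar x(\theta),\theta)\in\Cc_\rho$; hence \Cref{th:convergenceRecursion} applies and produces a unique nonempty compact fixed set $\fix[J_F(\bar x(\theta),\theta)]=\jalg(\theta)$. So $\jalg$ is everywhere nonempty-valued and in fact compact-valued.

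Second, for \emph{local boundedness} and \emph{closedness of the graph}, the idea is to exhibit $\jalg$ as a composition of three maps, each of which behaves well. Fix a compact neighborhood $U$ of a given $\theta_0$. The map $\theta\mapsto\bar x(\theta)=\fix(F_\theta)$ is continuous (indeed locally Lipschitz) by Banach--Picard stability under the contraction \Cref{ass:contraction} — a standard argument: $\|\bar x(\theta)-\bar x(\theta')\|\le\|F_\theta(\bar x(\theta'))-F_{\theta'}(\bar x(\theta'))\|/(1-\rho)$ and $F$ is locally Lipschitz in $\theta$ — so $\{\bar x(\theta):\theta\in U\}$ lies in a compact set $K\subset\RR^p$. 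Next, $(x,\theta)\mapsto J_F(x,\theta)$ has a closed graph and is locally bounded (it is a conservative Jacobian), so its restriction to $K\times U$ is uniformly bounded by some $R$; consequently every $J_F(\bar x(\theta),\theta)$ for $\theta\in U$ is contained in $R\,\BB$, a fixed compact set, and all lie in $\Cc_\rho$. Finally, by \Cref{prop:lipschitzContinuity} the map $\Jc\mapsto\fix(\Jc)$ is locally Lipschitz on $\Cc_\rho$ in the Hausdorff metric; combined with the bound $\sup_{[A,B]\in J_F}\|B\|\le R$, one reads off from that Lipschitz estimate that $\|\jalg(\theta)\|_{\sup}$ is bounded uniformly over $\theta\in U$, which gives local boundedness of $\jalg$.

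For the \emph{closed graph}, take sequences $\theta_n\to\theta$ and $M_n\in\jalg(\theta_n)$ with $M_n\to M$; I must show $M\in\jalg(\theta)$. Here the main obstacle — and the step I would spend the most care on — is transferring graph-closedness through the fixed-point operation. I would argue: $\bar x(\theta_n)\to\bar x(\theta)$ by continuity; then, using graph-closedness of $J_F$ together with \Cref{prop:lipschitzContinuity} (continuity of $\fix$ with respect to the Hausdorff distance on $\Cc_\rho$), the sets $J_F(\bar x(\theta_n),\theta_n)$ converge in Hausdorff distance to $J_F(\bar x(\theta),\theta)$ — this sub-step requires knowing that $J_F$ is itself Hausdorff-continuous at $(\bar x(\theta),\theta)$, which does not follow from mere graph-closedness, so instead I would proceed directly: write $M_n=\fix$-element, i.e.\ $M_n=A_nM_n+B_n$ is not available, but rather $M_n$ lies in the fixed set, so there exist $[A_n,B_n]\in J_F(\bar x(\theta_n),\theta_n)$ and $M_n'\in\jalg(\theta_n)$ with $M_n=A_nM_n'+B_n$; iterating and using the uniform bound and the contraction one shows $M$ is a Hausdorff-limit point of the iterates of $\Jc:=J_F(\bar x(\theta),\theta)$ started from $\{M\}$, hence $M\in\fix(\Jc)=\jalg(\theta)$ by uniqueness in \Cref{th:convergenceRecursion}. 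Cleanly, the cleanest route is: extract $[A_n,B_n]\to[A,B]$ (possible by local boundedness), note $[A,B]\in J_F(\bar x(\theta),\theta)$ by graph-closedness of $J_F$ and continuity of $\bar x$, pass to the limit in $M_n=A_nM_n'+B_n$ after also extracting $M_n'\to M'$ with $M'\in\jalg(\theta)$ (using local boundedness of $\jalg$ just proved, plus an inductive/closedness argument that limit points of $\jalg(\theta_n)$ land in $\jalg(\theta)$ — which is exactly what we are proving, so one instead invokes that $\jalg(\theta)$ is the \emph{unique} compact fixed set and that the Hausdorff-limit of $\jalg(\theta_n)$, which exists along a subsequence by compactness in the Hausdorff metric, must itself be a fixed set of $\Jc$, forcing it to equal $\jalg(\theta)$). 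This last uniqueness argument is the crux: it converts the delicate pointwise graph-closedness into a statement about convergence of compact sets, where \Cref{prop:lipschitzContinuity} and \Cref{th:convergenceRecursion} do the work.
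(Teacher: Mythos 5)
Your treatment of nonemptiness and local boundedness is correct and follows the paper's route: for each $\theta$ the set $J_F(\bar x(\theta),\theta)$ is a nonempty compact element of $\Cc_\rho$, so \Cref{th:convergenceRecursion} yields a unique nonempty compact fixed set, and local Lipschitz continuity of $\bar x$ together with local boundedness of $J_F$ and the quantitative bounds of the fixed-set machinery give a uniform bound on $\|\jalg(\theta)\|_{\sup}$ near any $\theta_0$. You also correctly spot the subtlety that the paper's own two-line proof glosses over: graph-closedness of $J_F$ does not give Hausdorff continuity of $\theta\rightrightarrows J_F(\bar x(\theta),\theta)$, so one cannot simply compose with the Hausdorff--Lipschitz continuity of $\fix$ from \Cref{prop:lipschitzContinuity}.

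However, your repair of that step has a genuine flaw at its crux. You extract a subsequence along which $\jalg(\theta_n)$ converges in the Hausdorff metric to a compact set $\Lc$ and assert that $\Lc$ ``must itself be a fixed set of $\Jc:=J_F(\bar x(\theta),\theta)$,'' then invoke uniqueness from \Cref{th:convergenceRecursion}. This is false in general: the sets $\Jc_n:=J_F(\bar x(\theta_n),\theta_n)$ are only \emph{outer} semicontinuous at $\theta$ (one has $\gap(\Jc_n,\Jc)\to0$ by closed graph plus local boundedness, but possibly $\gap(\Jc,\Jc_n)\not\to0$ when $J_F$ jumps up at the limit point), so passing to the limit in $\jalg(\theta_n)=\Jc_n(\jalg(\theta_n))$ yields only the inclusion $\Lc\subset\Jc(\Lc)$, not equality. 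A one-dimensional toy example: $\Jc_n=\{[1/2,\,0]\}$ has $\fix(\Jc_n)=\{0\}$, while $\Jc=\{[1/2,\,0],[1/2,\,1]\}$ satisfies $\Jc(\{0\})=\{0,1\}\neq\{0\}$, so the Hausdorff limit $\{0\}$ is not a fixed set of $\Jc$. Uniqueness therefore cannot be invoked. The correct conclusion is reached by the monotonicity route instead: from $\gap(\Jc_n,\Jc)=\epsilon_n\to0$ one gets $\Jc_n\subset\Jc+\epsilon_n\BB_{pm}$, hence $\fix(\Jc_n)\subset\fix(\Jc+\epsilon_n\BB_{pm})$ by \Cref{prop:inclusion}, and $\fix(\Jc+\epsilon_n\BB_{pm})\subset\fix(\Jc)+C\epsilon_n\BB$ by \Cref{prop:lipschitzContinuity}; letting $M_n\to M$ then forces $M\in\fix(\Jc)=\jalg(\theta)$ (equivalently, iterate $\Lc\subset\Jc^k(\Lc)$ and let $k\to\infty$). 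Since only this one-sided inclusion is needed for graph-closedness, the gap is localized and repairable with tools already established in the paper.
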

\begin{proof}
  The fact that $\jalg$ is locally bounded and non empty valued comes from the fact that $J_F$ is locally bounded with nonempty values and $\bar{x}$ is locally Lipschitz combined with Theorem \ref{th:convergenceRecursion}.

  By local Lipschitz continuity of $\bar{x}$ and the fact that $J_F$ has a closed graph, the set-valued map $\theta \rightrightarrows J_F(\bar{x}(\theta), \theta)$ also has a closed graph. By continuity of $\fixj$ with respect to the Hausdorff distance,  see Proposition~\ref{prop:lipschitzContinuity}, $\jalg$ has a closed graph.
\end{proof}

\subsection{Proof of \Cref{thm:jalgconvervative}}
\label{app:proofConservative}
\begin{proof}
Following \Cref{rem:implicitDiff}, we set
\begin{align*}
    \jimp\colon \theta
    \rightrightarrows
    \left\{(I - A)^{-1} B, [A,B] \in J_F(\bar{x}(\theta), \theta) \right\},
\end{align*}
a conservative Jacobian for $\bar{x}$ and $L_0 = \jimp$.
Now set by recursion for all $k \in \NN$
    \begin{align*}
        L_{k+1} \colon \theta \rightrightarrows J_F(\bar{x}(\theta), \theta)(L_k(\theta)).
    \end{align*}
    Recall that this means for all $\theta \in \RR^m$ and $k \in \NN$
    \begin{align*}
         L_{k+1}(\theta) = \{A L + B,\, [A,B] \in J_F(\bar{x}(\theta), \theta),\, L \in L_{k}(\theta)\}.
    \end{align*}
    Since $F(\bar{x}(\theta), \theta) = \bar{x}(\theta)$ for all $\theta$, $J_F$ is conservative for $F$ and $L_0$ is conservative for $\bar{x}$, we have by induction that for all $k \in \NN$, $L_k$ is conservative for $\bar{x}$.

    Fix $l \colon \RR^m \to \RR^m$ an arbitrary Borel measurable selection in $\jalg$, that is $l(\theta) \in \jalg(\theta)$ for all $\theta \in \RR^m$. Such a selection exist by \cite[Theorem 18.20]{aliprantis2006infinite} because $\jalg$ has a closed graph by \Cref{lem:regularityLimit}. Set for all $k \in \NN$ a measurable selection
    \begin{align*}
        l_k \colon \theta \to \arg\min_{z \in L_k(\theta)} \|z - l(\theta)\|.
    \end{align*}
    The function $(z,\theta) \to \|z - l(\theta)\|$ is Caratheodory (continuous in $z$, measurable in $\theta$), so such a selection exists (Aliprantis Theorem 18.19).
     By  \Cref{th:convergenceRecursion}, we have that $\dist(L_k(\theta),\jalg(\theta))$ tends to $0$ as $k$ grows, for all $\theta \in \RR^m$, where the convergence is in Hausdorff distance. Actually since all set-valued objects are locally bounded, the convergence occurs uniformly on every compact. This implies in particular that $l_k$ converges pointwise to $l$.

    Fix an absolutely continuous path $\gamma \colon [0,1] \to \RR^m$. We have for all $k \in \NN$, by conservativity,
    \begin{align*}
         \bar{x}(\gamma(1)) -  \bar{x}(\gamma(0)) &= \int_0^1 l_k(\gamma(t)) \dot{\gamma}(t) dt.
    \end{align*}
    Furthermore, $l_k \circ \gamma$ is measurable, converges pointwise to $l \circ \gamma$ and $l_k \circ \gamma$ can be uniformly bounded, let $K$ be such a bound. The integrable function $g \colon t \mapsto K\|\dot{\gamma}(t)\|$ dominates the integrand and $l_k \circ \gamma \times \dot{\gamma}$ converges pointwise to $l \circ \gamma \times \dot{\gamma}$. By the dominated convergence theorem (see \cite[Section 4.4]{royden1988real} ), we have
    \begin{align*}
         \bar{x}(\gamma(1)) -  \bar{x}(\gamma(0)) &= \int_0^1 l(\gamma(t)) \dot{\gamma}(t) dt.
    \end{align*}
    $L$ has a Castaing representation with a dense sequence of measurable selection \cite[Theorem 18.14]{aliprantis2006infinite}. Since $l$ was an arbitrary measurable selection in $L$, conservativity of $L$ follows by \cite[Lemma 8]{marx2022path}.
\end{proof}

\subsection{Proof of \Cref{cor:unrollingConvergence}}
\begin{proof}
   Fix $\theta$. We have $x_k(\theta) \to \bar{x}(\theta)$, so that for any $\epsilon > 0$, there exists $K \in \NN$ such that $J_F(x_k(\theta), \theta) \subset J_F(\bar{x}(\theta), \theta) + \epsilon \BB$ for all $k \geq K$. The result is then a consequence of Proposition \ref{prop:perturbedConvergence}, letting $\epsilon \to 0$. The last part is due to the conservativity of $\jalg$ which must be a singleton almost everywhere, equal to the classical Jacobian.
\end{proof}
\subsection{Proof of \Cref{cor:lipconv}}
\begin{proof}
	Define $(L_k)_{k \in \NN}$, a sequence of conservative Jacobians for $\bar{x}$ as in the begining of the proof of \Cref{thm:jalgconvervative} in \Cref{app:proofConservative}. By \cite[Theorem 1]{bolte2020conservative}, for each $k \in \NN$, there is a full measure set $S_k \subset \RR^m$ such that $L_k(\theta) = \left\{ \frac{\partial\bar{x} }{\partial \theta} (\theta) \right\}$ for all $\theta \in S_k$. Similarly, there exists a full measure set $S_{-1} \subset \RR^m$ such that $\jalg(\theta) = \left\{ \frac{\partial\bar{x} }{\partial \theta} (\theta) \right\}$ for all $\theta \in S_{-1}$. Setting $S = \cap_{i=-1}^{+\infty} S_i$, $S$ has full measure and for all $\theta \in S$ and for all $k \in \NN$,
	\begin{align*}
	    \jalg(\theta) = \left\{ \frac{\partial\bar{x} }{\partial \theta} (\theta) \right\} \qquad \qquad
	    L_k(\theta) = \left\{ \frac{\partial\bar{x} }{\partial \theta} (\theta) \right\}.
	\end{align*}
    Following the proof of \Cref{thm:jalgconvervative} in \Cref{app:proofConservative}, $L_k$ converges to $\jalg$ in Hausdorff distance, which means that convergence occurs in the classical sense since all sets in the sequence are singletons.
\end{proof}

\subsection{Proof of \Cref{prop:convergenceAlogirthmicDiff}}
\label{app:proofAutodiff}
\begin{proof}
				Under the setting of \Cref{cor:classicalJacobians},  for almost all $\theta \in \RR^m$, recursion \eqref{eq:autodiff} or \eqref{eq:autodiffSection} reduce to the following,  and all $k \in \NN$
    \begin{align}
        \label{eq:autodiffAppendixClassical}
        J_{k+1} = A_kJ_k + B_k
    \end{align}
		where $J_k = \frac{\partial x_k}{\partial \theta}$, $A_k = \frac{\partial F}{\partial x}(x_k,\theta)$ and $B_k = \frac{\partial F}{\partial \theta}(x_k,\theta)$ are classical Jacobians and $J_k$ converges to the classical Jacobian of $\frac{\partial \bar{x}}{\partial \theta}(\theta)$. Fix such a $\theta \in \RR^m$ and $k \in \NN$, $k \geq 1$.
    With the notation of Algorithm \ref{alg:autodiff0}, for the forward mode, multiplying \eqref{eq:autodiffAppendixClassical} on the right by $\dot{\theta}$, we have for all $i \in 1,\ldots k$
    \begin{align*}
						J_{i} \dot{\theta} = A_{i-1} J_{i-1} \dot{\theta} + B_{i-1} \dot{\theta}.
    \end{align*}
    Setting $\dot{x}_i = J_i \dot{\theta}$, this is exactly the recursion implemented by Algorithm \ref{alg:autodiff0} in forward mode. \Cref{cor:classicalJacobians} and the result follows from convergence of $J_k$.

     As for the backward mode a simple recursion shows that
    \begin{align}
				J_k =\quad & A_{k-1} A_{k-2} \ldots A_0 J_0 \nonumber\\
				+\:&A_{k-1} A_{k-2} \ldots A_1 B_0 \nonumber\\
        +\:& \ldots \nonumber\\
				+\:&A_{k-1} A_{k-2} \ldots A_i B_{i-1} \nonumber\\
        +\:& \ldots \nonumber\\
				+\:& A_{k-1} B_{k-2} \nonumber\\
				+\:& B_{k-1}.
    \end{align}
		Setting $B_{-1} = J_0$, we may rewrite equivalently,
		\begin{align}
						J_k = B_{k-1} + \sum_{i = 0}^{k-1} \left(\prod_{j = k-1}^{i} A_j\right) B_{i-1}.
						\label{eq:autodiffCompact}
		\end{align}
		Transposing and multiplying on the right by $\bar{w}_k$, we have
		\begin{align}
						J_k^T \bar{w}_k= B_{k-1}^T \bar{w}_k+ \sum_{i = 0}^{k-1} B_{i-1}^T \left(\prod_{j = {i}}^{k-1} A_j^T \right) \bar{w}_k.
						\label{eq:autodiffCompact2}
		\end{align}
		We set for all $i = 0, \ldots, k-1$,
		\begin{align}
						\bar{w}_{i} = \prod_{j = i}^{k-1} A_j^T \bar{w}_k.
						\label{eq:autodiffCompact3}
		\end{align}
		We have the backward recursion relation, for $i = k, \ldots, 1$
		\begin{align*}
						\bar{w}_{i-1} =  A_{i-1}^T \bar{w}_{i},
		\end{align*}
		which is the recursion implemented by Algorithm \ref{alg:autodiff0} in reverse mode. Combining \eqref{eq:autodiffCompact2} and \eqref{eq:autodiffCompact3}, we obtain
		\begin{align*}
						J_k^T \bar{w}_k= B_{k-1}^T\bar{w}_k + \sum_{i = 0}^{k-1} B_{i-1}\bar{w}_T = \sum_{i = 1}^{k} B_{i-1}^T\bar{w}_i + J_0^T \bar{w}_0,
		\end{align*}
		which is the quantity accumulated in $\bar{\theta}_k$ in Algorithm \ref{alg:autodiff0}. This proves that $\bar{\theta}_k^T$ returned by the backward mode is indeed equal to $\bar{w}_k^TJ_k$ and the convergence follows from convergence of both $\bar{w}_k$ and $J_k$ as $k \to \infty$.
\end{proof}

\section{Connection with implicit differentiation}
\label{app:impl-diff}
Recall that for all $\theta$
\begin{align*}{}
    \jimp(\theta) &=
    \left\{(I - A)^{-1} B, [A,B] \in J_F(\bar{x}(\theta), \theta) \right\} \\
     &=
    \left\{M,\, \exists [A,B] \in J_F(\bar{x}(\theta), \theta) \, M = AM + B\right\}.
\end{align*}
Setting $\mathcal{J} = J_F(\bar{x}(\theta), \theta)$,
we have therefore that $\jimp(\theta) \subset \J(\jimp(\theta))$. By recursion, for all $k \in \NN$, $\jimp(\theta) \subset \J^k(\jimp(\theta))$ and passing to the limit using Theorem \ref{th:convergenceRecursion}, $\jimp(\theta) \subset \fix(\J) = \jalg(\theta)$.
In particular, if $F$ is continuously differentiable, then \eqref{eq:autodiff} with classical Jacobians converges towards a classical implicit derivative.

However, the inclusion $\jimp(\theta) \subset \jalg(\theta)$ may be strict as the following example shows.
\begin{example}
    Set $\Jc = \{[A,B],\, A\in \Ac, B \in \Bc\}$, where
    \begin{align*}
      \Ac = \left\{
      \begin{pmatrix}
        \frac{\lambda + 1}{4}&0\\
        0& \frac{2 - \lambda}{4}
      \end{pmatrix},\, \lambda \in [0,1]
           \right\}
           \qquad
           \Bc = \left\{
           \begin{pmatrix}
             1\\
             1
           \end{pmatrix}
      \right\}.
    \end{align*}
    We set
    \begin{align*}
      \mathcal{T} = (I - \Ac)^{-1} \Bc =
      \left\{
      \begin{pmatrix}
        \frac{4}{3 - \lambda}\\
        \frac{4}{2 + \lambda}
      \end{pmatrix},\, \lambda \in [0,1]
      \right\}.
    \end{align*}

    As already observed, we have $\mathcal{T} \subset \Ac\mathcal{T} + \Bc$, but the inclusion is strict. Therefore $\mathcal{T}$ is not a fixed point of the affine iteration and it is only contained in it.

    Indeed, we have
    \begin{align*}
      \begin{pmatrix}
        \frac{1+1}{4}&0  \\
        0&\frac{2 - 1}{4}
      \end{pmatrix}
           \begin{pmatrix}
             \frac{4}{3 - 0}\\
             \frac{4}{2+0}
           \end{pmatrix} +
      \begin{pmatrix}
        1\\
        1
      \end{pmatrix} =
      \begin{pmatrix}
        \frac{5}{3}\\
        \frac{3}{2}
      \end{pmatrix}
      \in \Ac \mathcal{T} + \Bc.
    \end{align*}
    However solving for $\lambda$
    \begin{align*}
      \begin{pmatrix}
        \frac{5}{3}\\
        \frac{3}{2}
      \end{pmatrix} =
      \begin{pmatrix}
        \frac{4}{3 - \lambda}\\
        \frac{4}{2 + \lambda}
      \end{pmatrix},
    \end{align*}
    the first equation requires $\lambda = \frac{3}{5}$ while the second requires $\lambda = \frac{2}{3}$ which shows that the given vector does not belong to $\mathcal{T}$.
\end{example}

\section{Semialgebraic Lipschitz gradient selection functions}
\label{app:lipfun}

\subsection{Lipschitz property of conservative Jacobians of selections}
\label{app:lipcons}
\begin{lemma}[Conservative Jacobians of selections are Lipschitz-like]
  Let $F$ be continuous, semialgebraic with Lipschitz gradient selection. Then for each $x_0 \in \RR^p$, there exists $R>0$ such that
  \begin{align*}
    \gap(J^s_F(x), J^s_F(x_0))  &\leq L \|x - x_0\|, & \forall x,\, \|x - x_0\| \leq R,
  \end{align*}
  where $L$ is the Lipschitz constant given by the selection structure of $F$.
  \label{lem:explicitSelection}
\end{lemma}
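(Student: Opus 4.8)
The goal is to bound $\gap(J^s_F(x), J^s_F(x_0))$ by $L\|x - x_0\|$ for $x$ near $x_0$, where $J^s_F(x) = \conv\{\partial F_i/\partial x (x) : i \in I(x)\}$ and each $F_i$ has $L$-Lipschitz Jacobian. The key structural fact I would exploit is that the active index set cannot suddenly \emph{grow} near $x_0$: by continuity of $F$ and of each $F_i$, and by the definition of $I(x) = \{i : F(x) = F_i(x)\}$, for any index $j \notin I(x_0)$ we have $F(x_0) \neq F_j(x_0)$, hence $F(x) \neq F_j(x)$ for all $x$ in a suitable neighborhood; intersecting finitely many such neighborhoods gives $R > 0$ with $I(x) \subseteq I(x_0)$ whenever $\|x - x_0\| \leq R$. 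This is the geometric heart of the argument.

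Given this inclusion, the plan is: take an arbitrary $M \in J^s_F(x)$ with $\|x - x_0\| \leq R$. Write $M = \sum_{i \in I(x)} \lambda_i \frac{\partial F_i}{\partial x}(x)$ as a convex combination. Since $I(x) \subseteq I(x_0)$, the point $M' := \sum_{i \in I(x)} \lambda_i \frac{\partial F_i}{\partial x}(x_0)$ is a convex combination of Jacobians indexed within $I(x_0)$, hence $M' \in J^s_F(x_0)$. Then estimate
\begin{align*}
  d(M, J^s_F(x_0)) \leq \|M - M'\| \leq \sum_{i \in I(x)} \lambda_i \left\| \tfrac{\partial F_i}{\partial x}(x) - \tfrac{\partial F_i}{\partial x}(x_0) \right\| \leq \sum_{i \in I(x)} \lambda_i \, L \|x - x_0\| = L\|x - x_0\|,
\end{align*}
using the $L$-Lipschitz continuity of each $F_i$'s Jacobian and $\sum \lambda_i = 1$. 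Taking the maximum over $M \in J^s_F(x)$ yields $\gap(J^s_F(x), J^s_F(x_0)) \leq L\|x - x_0\|$.

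The main obstacle — really the only nontrivial point — is establishing the one-sided inclusion $I(x) \subseteq I(x_0)$ locally, i.e. upper semicontinuity of the active-set map. I would handle it exactly as sketched: for each $j \in \{1,\dots,m\} \setminus I(x_0)$, continuity of $x \mapsto F(x) - F_j(x)$ and the strict inequality $F(x_0) \neq F_j(x_0)$ give a radius $R_j > 0$ on which $F \neq F_j$; set $R = \min_j R_j$ (a positive minimum over a finite set). One should note this uses only continuity of $F$ and the $F_i$ — semialgebraicity is not needed here, consistent with the remark before \Cref{cor:lipconv}. Note also that the gap, not the full Hausdorff distance, is the right object: the reverse inclusion $I(x_0) \subseteq I(x)$ need not hold, so $J^s_F(x_0)$ may be strictly larger than $J^s_F(x)$, and indeed only one-sided control is claimed and needed (this one-sided estimate is exactly what feeds the perturbed-iteration machinery of \Cref{prop:perturbedConvergence} and \Cref{cor:perturbedConvergence} to give the linear rate in \Cref{cor:lipconv}).
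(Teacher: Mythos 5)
Your proof is correct, and the second half (the convex-combination estimate using the $L$-Lipschitz Jacobians of the $F_i$, valid once $I(x)\subseteq I(x_0)$ is secured) is essentially identical to the paper's. Where you genuinely diverge is in how you obtain the local inclusion $I(x)\subseteq I(x_0)$. You argue purely topologically: for each $j\notin I(x_0)$ the continuous function $x\mapsto F(x)-F_j(x)$ is nonzero at $x_0$, hence nonzero on a ball of radius $R_j>0$, and $R=\min_j R_j$ works; this uses only continuity of $F$ and the $F_i$ and makes no use of semialgebraicity. The paper instead introduces the set-valued map $g(r)=\cup_{\|x-x_0\|\leq r} I(x)$, invokes semialgebraicity (piecewise constancy of $g$ and existence of a limit as $r\to 0$) to get a stabilized index set $I$ with $I(x)\subseteq I$ for $\|x-x_0\|\leq R$, and then uses continuity of each $F_i$ to show $I\subseteq I(x_0)$. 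The two routes prove the same inclusion; yours is more elementary and makes explicit that the Lipschitz-like estimate of this lemma needs no semialgebraicity at all (consistent with the paper's own remark before \Cref{cor:lipconv} that semialgebraicity is only invoked to guarantee conservativity of $J^s_F$, which is a separate matter), while the paper's stabilization argument is a standard o-minimal reflex that generalizes to situations where one wants finer information about which strata accumulate at $x_0$. Your closing observation that only the one-sided $\gap$ bound is available (since $I(x_0)\subseteq I(x)$ can fail) and that this is exactly what \Cref{prop:perturbedConvergence} and \Cref{cor:perturbedConvergence} consume is accurate and matches how the lemma is used in the proof of \Cref{cor:lipconv}.
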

\begin{proof}
  Fix $x_0 \in \RR^p$ and consider the function $g$ which associates to $r>0$ a subset of $\{1,\ldots, m\}$ defined as
  \begin{align*}
    g(r) = \cup_{\|x - x_0\| \leq r}\; I(x).
  \end{align*}
  The function $g$ is semialgebraic and therefore it admits a limit as $r \to 0$. The function $g$ is actually piecewise constant so that the limit is reached for some $R>0$ by semialgebraicity. This means that there is $R>0$ and an index set $I \subset \{1,\ldots, m\}$ such that $I(x) \subset I$ for all $x$ such that $\|x- x_0\| \leq R$. Furthermore, for each $i \in I$ and all $0<r\leq R$, there exists $x$ such that $\|x - x_0\| \leq r$ and $F_i(x) = F(x)$. By continuity of each component $F_i$, we have for each $i \in I$, $F_i(x_0) = F(x_0)$, that is $I \subset I(x_{0})$.

  We deduce that for each $x$ such that $\|x - x_0 \| \leq R$ and $i \in I(x)$, we have
  \begin{align*}
    \min_{V\in  J^s_F(x_0)} \left\Vert V - \frac{\partial F_i}{\partial x}(x)\right\Vert \leq  \left \Vert \frac{\partial F_i}{\partial x}(x_0) - \frac{\partial F_i}{\partial x}(x)\right\Vert \leq L\|x - x_0\|.
  \end{align*}
  Fix any $Z \in J^s_F(x)$, it is a convex combination of $\frac{\partial F_i}{\partial x}(x)$ for $i \in I(x)$ so by convexity of the distance, we have
  \begin{align*}
    \min_{V\in  J^s_F(x_0)} \|V - Z\| \leq L\|x - x_0\|,
  \end{align*}
  which proves the result since this allows to bound the supremum over $Z \in J^s_F(x)$ by the desired quantity.
\end{proof}

\subsection{Proof of \Cref{cor:lipconv}}
\begin{proof}
    This is a consequence of linear convergence of the recursion $x_{k+1} = F(x_k,\theta)$ combined with Lemma \ref{lem:explicitSelection} and Corollary \ref{cor:perturbedConvergence}.
\end{proof}

\section{Proximal splitting algortihms in convex optimization}
\label{app:prox}
\subsection{Proof of Proposition \ref{prop:fb}}
\label{app:proof-fb}
\begin{proof}
				We consider the gradient step operation $H_\alpha \colon (x,\theta) \mapsto x - \alpha \nabla_x f(x,\theta)$. We have for all $(x,\theta)$,
				\begin{align*}
								F_\alpha(x,\theta) = G_\alpha(H_\alpha(x,\theta),\theta).
				\end{align*}
				By \Cref{ass:reg-prox}, both $G_\alpha$ and $H_\alpha$ are $1$-Lipschitz in $x$ for fixed $\theta$ and we are going to show that if either $f$ or $g$ satisfy the strong convexity condition, the corresponding map is a strict contraction in $x$ for fixed $\theta$.
				Furthermore, the mapping $\jac^c_{H_\alpha} \colon (x,\theta) \rightrightarrows \left\{[I - \alpha A,- \alpha B],\, [A,B] \in J_f^2(x,\theta)\right\}$ is the Clarke Jacobian of $H_\alpha$.
				By \Cref{ass:reg-prox}, all the functions are path-differentiable \cite{bolte2020conservative} and one may obtain a conservative jacobian for $F$ by applying differential calculus rules \cite{bolte2020conservative}. We set for all $(x,\theta)$ a conservative Jacobian for $F_\alpha$,
				\begin{align}
								J_{F_\alpha}(x,\theta) = \left\{
												[C(I - \alpha A), -\alpha CB + D],\quad
								[A,B] \in J^2_{f}(x,\theta),\, [C,D] \in J_{G_\alpha}(x - \alpha \nabla_x f(x,\theta),\theta)\right\}
								\label{eq:jacobianfb}
				\end{align}
				Whenever $\nabla_x f$ is differentiable at $(x,\theta)$, the first $p$ columns of its Jacobian form a symmetric positive definite square matrix with eigenvalues at most $L$. This implies that the matrix $(I - \alpha A)$ in \eqref{eq:jacobianfb} is symmetirc with eignevalues in $[-1,1]$ and strictly greater than $-1$. Similarly, whenever $G_\alpha$ is differentiable, since it is $1$-Lipschitz in $x$ for fixed $\theta$ and the gradient of a $C^1$ function, the first $p$ columns of its Jacobian form a symmetric positive definite square matrix with eigenvalues at most $1$. This implies that the matrix $C$ in \eqref{eq:jacobianfb} is symmetric with eignevalues in $[0,1]$. In addition, we have the following;
				\begin{itemize}
								\item  Assume that for all $\theta$, $f$ is $\mu$-strongly convex. In this case, similarly as above the matrix $(I - \alpha A)$ in \eqref{eq:jacobianfb} has eigenvalue in $(-1,1)$ for all $(x,\theta)$.
								\item Assume that for all $\theta$, $g$ is $\mu$-strongly convex. In this case, similarly as above the matrix $C$ in \eqref{eq:jacobianfb} has eigenvalue in $[0,1/(1+\alpha \mu)]$ for all $(x,\theta)$ \cite[Proposition 23.13]{bauschke2011convex}.
				\end{itemize}
				In both cases, the product $C(I - \alpha A)$ in \eqref{eq:jacobianfb} has operator norm strictly smaller than $1$ and \Cref{ass:contraction} holds.
\end{proof}
\subsection{Proof of Proposition \ref{prop:DR}}
\label{app:proof-DR}

\begin{proof}
				From \cite[Proposition 23.11]{bauschke2011convex}, both $R_{\alpha f}$ and $R_{\alpha g}$ are $1$-Lipschitz. We are going to show that $R_{\alpha f}$ is a strict contraction and the result will follow. Since $f$ is $C^{1,1}$ in $x$, we have for all $\theta\in \RR^m$,
				\begin{align*}
								z = \prox_{\alpha f(\cdot,\theta)}(x) \Leftrightarrow  z+ \alpha \nabla_x f(z,\theta) - x = 0
				\end{align*}
				Set $H_\alpha(z,x,\theta) = z+ \alpha \nabla_x f(z,\theta) - x$, we have that
				\begin{align}
								\jac^c_{H_\alpha}(z,x,\theta) \rightrightarrows \left\{
												[I + \alpha A,\,
																-I,\,
												\alpha B]
								\right\}
								\label{eq:clarkeJacobianImplicit}
				\end{align}
				is the Clarke Jacobian of $H_\alpha$. Similarly as in \Cref{app:proof-fb}, by strong convexity of $f$, the matrix $I + \alpha A$ in \eqref{eq:clarkeJacobianImplicit} is symmetric with eigenvalues strictly greater than $0$ and smaller than $1$. By implicit differential calculus rule in \cite[Theorem 2]{bolte2021nonsmooth}, the mapping
				\begin{align}
								J_{\prox_{\alpha f(\cdot, \theta)}}(x,\theta) \rightrightarrows \left\{
												[(I + \alpha A)^{-1},\,-\alpha  (I + \alpha A)^{-1}B]
								,\, [A,B] \in J_f^2(\prox_{\alpha f(\cdot, \theta)},\theta)
								\right\}
								\label{eq:clarkeJacobianImplicit2}
				\end{align}
				is conservative for $(x,\theta) \mapsto \prox_{\alpha f(\cdot, \theta)}$. Furthermore, the matrix $(I + \alpha A)^{-1}$ in \eqref{eq:clarkeJacobianImplicit2} is symmetric eigenvalues in $(0,1)$. This entails that the mapping
				\begin{align}
								J_{R_{\alpha f(\cdot, \theta)}}(x,\theta) \rightrightarrows \left\{
												[2(I + \alpha A)^{-1} - I,\,-2\alpha  (I + \alpha A)^{-1}B - I]
								,\, [A,B] \in J_f^2(\prox_{\alpha f(\cdot, \theta)},\theta)
								\right\}
								\label{eq:clarkeJacobianImplicit3}
				\end{align}
				is conservative for $R_{\alpha f(\cdot, \theta)}$ and the matrix $2(I + \alpha A)^{-1} - I$ is symmetric with eigenvalues in $(-1,1)$.

				Similarly, the mapping
				\begin{align}
								J_{R_{\alpha g(\cdot, \theta)}}(x,\theta) \rightrightarrows \left\{
												[2 C - I,\,2D - I]
												,\, [C,D] \in J_{\prox_{\alpha g(x,\theta)}}
								\right\}
								\label{eq:clarkeJacobianImplicit4}
				\end{align}
				is the Clarke Jacobian of $R_{\alpha g(\cdot, \theta)}$ and the matrix $2 C - I$ in \eqref{eq:clarkeJacobianImplicit4} is symmetric with eigenvalues in $[-1,1]$. One may combine $J_{R_{\alpha f(\cdot, \theta)}}$ and $J_{R_{\alpha g\cdot, \theta)}}$,  using differential calculus rule to obtain a conservative Jacobian $J_{F_\alpha}$ for $F_\alpha$, such that for all $(x,\theta)$ and $[E,F] \in J_{F_\alpha}(x,\theta)$, the square matrix $E$ is of the form $\frac{I}{2} + ((I + \alpha A)^{-1} - I) (2C - I)$ where $A$ is from \eqref{eq:clarkeJacobianImplicit3} and $C$ is from \eqref{eq:clarkeJacobianImplicit4}. Such a matrix $E$ has operator norm strictly smaller than $1$ which is Assumption \ref{ass:contraction}.
\end{proof}

\subsection{Equivalence between ADMM and dual Douglas--Rachford}
\label{app:eq-admm-dr}

We need the following lemma.
\begin{lemma}\label{lem:dual-exp-prox}
    Let $F,G$ two convex, lower semicontinuous and closed functions and $h$ defined by
    \[ h(x) = F^*(-A^\top x) + G^*(x) . \]
    Then, $h$ is convex, lower semicontinuous, closed, and
    \begin{equation}
        \prox_{\alpha h}(x) = x + \alpha(A \hat u - \hat v)
    \end{equation}
    where
    \begin{equation*}
        (\hat u, \hat v) \in
        \arg\min_{u,v} \left\{
            F(u) + G(v) + x^\top (Au - v) + \frac{\alpha}{2} \| A u - v \|_2^2
        \right\} .
    \end{equation*}
\end{lemma}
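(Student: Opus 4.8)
The plan is to express $\prox_{\alpha h}(x)$ as the solution of a strongly convex minimization and to compute its Fenchel--Rockafellar dual, which will turn out to be exactly the problem defining $(\hat u,\hat v)$; reading off the primal solution from the dual one then yields the formula.

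I would first dispose of the regularity claim. Since $F$ and $G$ are proper, convex, lower semicontinuous and closed, the Fenchel--Moreau theorem gives that $F^*$ and $G^*$ are proper, convex, lower semicontinuous and closed, with $F^{**}=F$ and $G^{**}=G$. Precomposing $F^*$ with the continuous linear map $z\mapsto -A^\top z$ preserves convexity and lower semicontinuity, and adding $G^*$ does too, so $h$ is convex, lower semicontinuous and closed (properness of $h$, i.e. $\mathrm{dom}\,h\neq\emptyset$, being part of the ambient hypotheses, as it is needed for the statement to be nonvacuous).

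Next, by definition $\prox_{\alpha h}(x)$ is the unique minimizer of $z\mapsto h(z)+\tfrac{1}{2\alpha}\|z-x\|_2^2$, which exists and is unique by strong convexity and coercivity. Inserting the conjugate representations $F^*(-A^\top z)=\sup_u\{-\langle Au,z\rangle-F(u)\}$ and $G^*(z)=\sup_v\{\langle v,z\rangle-G(v)\}$ rewrites this as the minimax problem
\begin{equation*}
  \min_{z}\ \sup_{u,v}\ \Big\{-\langle Au,z\rangle-F(u)+\langle v,z\rangle-G(v)+\tfrac{1}{2\alpha}\|z-x\|_2^2\Big\}.
\end{equation*}
The bracketed function is jointly convex in $z$ and concave in $(u,v)$; since the quadratic regularizer is finite and continuous on all of $\RR^p$, the qualification condition of Fenchel--Rockafellar duality is satisfied, so the $\min$ and $\sup$ may be interchanged, strong duality holds, and the dual supremum is attained at some $(\hat u,\hat v)$. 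For fixed $(u,v)$ the inner minimization in $z$ is an unconstrained quadratic; completing the square gives the minimizer $z=x+\alpha(Au-v)$ and, after simplification using $\|z-x\|_2^2=\alpha^2\|Au-v\|_2^2$, the optimal value
\begin{equation*}
  -\Big(F(u)+G(v)+x^\top(Au-v)+\tfrac{\alpha}{2}\|Au-v\|_2^2\Big).
\end{equation*}
Hence the dual problem is $-\min_{u,v}\{F(u)+G(v)+x^\top(Au-v)+\tfrac{\alpha}{2}\|Au-v\|_2^2\}$, so $(\hat u,\hat v)$ is a minimizer of the problem in the statement. Finally, since $(z^\star,\hat u,\hat v)$ is a saddle point, the primal optimum $z^\star=\prox_{\alpha h}(x)$ coincides with the (unique) inner minimizer evaluated at $(\hat u,\hat v)$, namely $z^\star=x+\alpha(A\hat u-\hat v)$, which is the desired identity.

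The main obstacle is making the minimax interchange rigorous together with the existence of the dual optimizer $(\hat u,\hat v)$: one must verify the relevant qualification condition of Fenchel--Rockafellar duality — here painless because the quadratic term $\tfrac{1}{2\alpha}\|\cdot-x\|_2^2$ has full domain and is continuous everywhere, so it is continuous at every point of $\mathrm{dom}\,h$ — and then argue that the dual infimum is attained, which in the situations where the lemma is used (e.g. the strongly convex $\phi_\theta$ in the ADMM application) follows from coercivity of the dual objective. Every other step is a routine one-line quadratic computation.
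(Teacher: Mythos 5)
The paper does not actually prove this lemma: it states it and immediately remarks that ``the material contained in this section is already known in the literature,'' so there is nothing to compare your argument against line by line. On its own merits your proof is correct. The sign bookkeeping checks out: the inner minimizer of $z\mapsto\langle v-Au,z\rangle+\tfrac{1}{2\alpha}\|z-x\|^2$ is $z=x+\alpha(Au-v)$ with optimal value $-\bigl(F(u)+G(v)+x^\top(Au-v)+\tfrac{\alpha}{2}\|Au-v\|^2\bigr)$, which identifies the dual problem with the one in the statement, and the saddle-point relation then reads off $\prox_{\alpha h}(x)=x+\alpha(A\hat u-\hat v)$. Two points deserve slightly more care than your sketch gives them. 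First, the minimax interchange with two dual blocks $(u,v)$ is cleanest if you recast it as a single Fenchel--Rockafellar pair: write $h(z)=\Phi(Mz)$ with $Mz=(-A^\top z,z)$ and $\Phi(p,q)=F^*(p)+G^*(q)$, so that $\Phi^*=F\oplus G$ by Fenchel--Moreau; the qualification then holds because the quadratic term is finite and continuous everywhere, exactly as you say, and dual attainment comes for free from the same theorem rather than from a separate coercivity argument. Second, since the lemma asserts a formula valid for \emph{any} $(\hat u,\hat v)$ in the argmin, you should note that $A\hat u-\hat v$ is the same for all minimizers (the objective is strictly convex in the quantity $Au-v$, or equivalently it is pinned down by the unique $z^\star$ through the saddle-point relation). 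For what it is worth, the route usually taken in the ADMM literature runs the same duality in the opposite direction: one introduces $s=Au-v$ in the $(u,v)$ problem, forms the Lagrangian with multiplier $z$, and recognizes the dual as the prox problem; your direction is equivalent and arguably more natural given that the lemma is phrased as a statement about $\prox_{\alpha h}$.
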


The material contained in this section is already known in the litterature accross several papers and lecture notes, but for the sake of completeness, we include a full derivation of the equivalence.

In this appendix, we drop the dependency to the variable $\theta$ since we are only concerned on the behaviour with respect to $x$.
We recall that the iteration of Douglas--Rachford are defined by an initialization $y_{0}$ and the recursion
\begin{equation}\label{eq:dr-std}
\begin{split}
x_{k+1} &= \prox_{f}(y_{k}) \\
y_{k+1} &= y_{k} + \prox_{g}(2x_{k+1} - y_{k}) - x_{k+1} .
\end{split}
\end{equation}
By denoting $\tilde x_k = x_{k+1}$ and $\tilde y_k = y_k$, we can rewrite the updates of Douglas--Rachford (given $\tilde x_0$ and $\tilde y_0$) as
\begin{equation}\label{eq:dr-std-alter}
\begin{split}
\tilde y_{k+1} &= \tilde y_{k} + \prox_{g}(2\tilde x_{k} - \tilde y_{k}) - \tilde x_{k} . \\
\tilde x_{k+1} &= \prox_{f}(\tilde y_{k+1})
\end{split}
\end{equation}
Introducing the variable $\hat r = \prox_{g}(2\hat x - \hat y)$, this is also equivalent to
\begin{equation}\label{eq:dr-std-alter-bis}
\begin{split}
\hat r_{k+1} &= \prox_{g}(2\hat x_{k} - \hat y_{k}) \\
\hat x_{k+1} &= \prox_{f}(\hat y_{k} + \hat r_{k+1} - \hat x_{k}) \\
\hat y_{k+1} &= \hat y_{k} + \hat r_{k+1} - \hat x_{k}
\end{split}
\end{equation}
Using the change of variable $\hat w_{k} = \hat x_{k} - \hat y_{k}$, we have
\begin{equation}\label{eq:dr-std-alter-bis}
\begin{split}
\hat r_{k+1} &= \prox_{g}(\hat x_{k} + \hat w_{k}) \\
\hat x_{k+1} &= \prox_{f}(\hat r_{k+1} - \hat w_{k}) \\
\hat w_{k+1} &= \hat w_{k} + \hat x_{k+1} - \hat r_{k+1} .
\end{split}
\end{equation}
This formulation will be convenient to show how to retrieve the equations of ADMM~\eqref{eq:admm}.

The dual problem of \eqref{eq:sep-pb} is given by~\eqref{eq:sep-pb-dual}
\begin{equation}\label{eq:sep-dual}
\max_x - f(x) - g(x).
\end{equation}
where $f(x) = \phi^\star(-A x) + c^\top x$ and $g(x) = \psi(-B x)$

We consider the update rules given by \eqref{eq:dr-std-alter-bis}, i.e.,
\begin{align}
    \hat r &= \prox_{\alpha g}(x + w) \label{eq:update-r} \\
    \hat x &= \prox_{\alpha f}(\hat r - w) \label{eq:update-x} \\
    \hat w &= w + \hat x - \hat r .
\end{align}
Applying \Cref{lem:dual-exp-prox} to $F=\phi$ and $G=\iota_{c}$, we rewrite \eqref{eq:update-r} by
\[ \hat r = x + w + \alpha(A \hat u - c) \]
where
\[ \hat u = \arg\min_u \left\{ \phi(x) + x^\top ( Au - v ) + \frac{\alpha}{2} \| Au - c + w/\alpha \|_2^2 \right\} . \]
Using the same lemma to $F=\psi$ and $G=0$, we rewrite \eqref{eq:update-x} by
\[ \hat x = x + \alpha(A \hat u + B \hat v - c) \]
where
\[ \hat v = \arg\min_v \left\{ \psi(v) + x^\top Bv + \frac{\alpha}{2} \| A \hat u + B v - c \right\} .\]
Finaly, combining the expression of $\hat r$ and $\hat x$, we obtain
\[ \hat w = \alpha B \hat v . \]

\section{Inertial methods}
\label{sec:suppInertial}
Let us first recall notations from \Cref{sec:inertial}.
Consider a function $f \colon \RR^{p} \times \RR^{m} \to \RR$, and $\beta > 0$, for simplicity, when the second argument is fixed we write $f_\theta \colon x \mapsto f(x,\theta)$. Set for all $x,y,\theta$, $F(x,y,\theta) = (x - \nabla f_\theta(x) + \beta (x - y), x)$, consider the Heavy-Ball algorithm $(x_{k+1}, y_{k+1}) = F(x_k, y_k, \theta)$ for $k \in \NN$.
If $f_\theta$ is $\mu$-strongly convex with $L$-Lipschitz gradient, then, choosing $\alpha = 1/L$ and $\beta < \frac{1}{2}\left( \frac{\mu}{2L} + \sqrt{\frac{\mu^2}{4L^2} + 2} \right)$,
the algorithm will converge globally at a linear rate to the unique solution,

\subsection{Failure of Forward differentiation for $C^{1,1}$ objectives}
The Jacobian of $F$ for the Heavy-Ball agorithm (in $x,y$)  is of the form
\begin{align}
        \mathrm{Jac}_F(x,y,\theta) =  \begin{pmatrix}
            (I - \alpha \nabla^2 f_\theta(x) ) + \beta I & - \beta I\\
            I& 0
        \end{pmatrix},
        \label{eq:heavyBallDiff}
\end{align}
when $f$ is $C^2$. If $f$ is $C^{1,1}$, then the Hessian can be replaced by a set-valued conservative Jacobian of the gradient: $J_{\nabla f_\theta}$.

\begin{proof}[of Proposition \ref{prop:hb-fail}]

Recall that the function $f \colon \RR^2 \to \RR$ is given by
\begin{align*}
    f \colon (x,\theta)  \mapsto
    \begin{cases}
        \frac{x^2}{2} & \text{ if } x \geq 0\\
        \frac{x^2}{8} & \text{ if } x < 0.
    \end{cases}
\end{align*}
We have $f'(x) = x$ for $t \geq 0$ and $f'(x) = \frac{x}{4}$ for $t<0$, therefore, $f'$ is $1$-Lipschitz. The Clarke subdifferential of $f'$ is $\{\frac{1}{4}\}$ for $t<0$, $\{1\}$ for $t>0$ and the segment $\left[ \frac{1}{4}, 1\right]$ at $t=0$. Finally, $f$ is $\mu=\frac{1}{4}$ strongly convex and has $L=1$ Lipschitz gradient and the unique fixed point of the Heavy-Ball algorithm applied to $f(\cdot, \theta)$ is $x = y = \theta$.
Choosing $\alpha = 1$, $\beta = 0.75$, we have
\begin{align*}
    \beta < \frac{1}{2}\left( \frac{\mu}{2L} + \sqrt{\frac{\mu^2}{4L^2} + 2} \right) =  \frac{1}{2}\left( \frac{1}{8} + \sqrt{\frac{1}{64} + 2} \right) \simeq 0.77.
\end{align*}
Therefore, the heavy ball algorithm with this choice of parameter converges linearly to the unique solution which is $0$, a fixed point of the iteration mapping.

Set
\begin{align*}
    F(x,y,\theta) = (x - \nabla_x f(x,\theta) + \beta (x - y), x).
\end{align*}
At $(0,0,0)$, the last column of the Jacobian of $F$ is $(0,0)$ and the first two columns are given by
\begin{align*}
    J = \mathrm{conv} \left\{M_1,M_2 \right\},
\end{align*}
where
\begin{align*}
    M_1 &=
    \begin{pmatrix}
        \frac{3}{2} &- \frac{3}{4}   \\
        1&0
    \end{pmatrix}\qquad\qquad
    M_2 =
    \begin{pmatrix}
        \frac{3}{4} &- \frac{3}{4}   \\
        1&0
    \end{pmatrix}.
\end{align*}
Therefore, the Clarke Jacobian of $F$ (with respect to $x,y$) at $(0,0,0)$ is given by
\begin{align*}
J_F(0,0,0) = \conv\{M_1,M_2\}, \qquad  M_1 =
    \begin{pmatrix}
        \frac{3}{2} &- \frac{3}{4}   \\
        1&0
    \end{pmatrix},\qquad
    M_2 =
    \begin{pmatrix}
        \frac{3}{4} &- \frac{3}{4}   \\
        1&0
    \end{pmatrix}.
  \end{align*}
We have
\begin{align*}
    M_1 M_1 M_2 M_2 = \frac{-1}{32}
    \begin{pmatrix}
        36& 0\\
        27&9
    \end{pmatrix},
\end{align*}
which has two eigenvalues $\frac{-9}{8}< -1$ and $\frac{-9}{32}$. Setting for any $\theta \in \RR$ $x_0(\theta) = \theta$, $y_0(\theta) = \theta$, we have for all $k \in \NN$ $x_k(\theta) = y_k(\theta) = \theta$, in other words, this is the unique fixed point of the Heavy-Ball algorithm.

\end{proof}

Given $l \in \NN$, the forward propagation recursion in \eqref{eq:autodiff} presented in Figure \ref{fig:heavyBallInstability} satisfies for $k =8 l$
\begin{align*}
    (M_1 M_1 M_2 M_2)^{2l}
    \begin{pmatrix}
        1\\1
    \end{pmatrix}
\end{align*}
This products will diverge diverge due to the eigenvalue of $(M_1M_1M_2M_2)^2$ strictly above 1. In other words, for all $k$, $J_{x_{8k}}$ given by \eqref{eq:autodiff} contains elements which magnitude diverges at a geometric rate. We conclude that, for all $k \in \NN$, $J_{x_k}$ contains elements which magnitude diverge at a geometric rate.

This illustrates the failure of forward derivative propagation on $f(\cdot,\theta)$: the Heavy Ball algorithm is stable and globally linearly convergent, its fixed point is differentiable (it is actually constant in $\theta$), yet there is a parametric initialization $x(\theta),y(\theta)$ such that forward propagation of derivatives produces diverging elements for $\theta = 0$. Note that implicit differentiation provides the correct derivative, which is $0$, since $x(\theta) = 0$ is the unique fixed point of the gradient iterations. Forward derivative propagation on the gradient descent algorithms also results in the limit in $0$ derivative since it only contains element which converge to $0$ at a geometric rate.

Le us emphasize again that such pathology would not happen if $f$ was $C^2$. Indeed, in this case, $J_f^2$ would be single valued and the divergence phenomenon would not appear. This illustrate a fundamental difference between $C^{1,1}$ and $C^2$ objectives in terms of forward derivative propagation for second order inertial methods.

\section{Experiments details}
\label{app:xp}

All the experiments where run on a MacBook M1 Pro (arm64), on Python 3.9 and \texttt{numpy} 1.21 for a compute time inferior to one hour.
They are repeated 100 times, and we report the median as a blue line and the first and last deciles as a blue shaded area.
The solutions are computed with 2000 iterations, and the curves are reported for the 1000 first iterations.
The differentiation of all methods is performed in forward-mode with \texttt{jacfwd} of the module \texttt{jax}.

\paragraph{Forward--Backward for the Ridge.}
The dimensions of the problem are $n=500$, $p=300$.
The design matrix is Gaussian, i.e., $X_{i,j} \iidsim \mathcal{N}(0,1)$ and the observations $y_i \iidsim \mathcal{N}(0,1)$.
The regularization parameter is set to $\theta = 0.05$.

\paragraph{Forward--Backward algorithm for the Lasso.}
The dimensions of the problem are $n=50$, $p=500$.
The design matrix is Gaussian, i.e., $X_{i,j} \iidsim \mathcal{N}(0,1)$ and the observations $y_i \iidsim \mathcal{N}(0,1)$.
The regularization parameter is set to $\theta = 0.2 \times \theta_{\text{max}}$ where $\theta_{\text{max}} = \| X^\top y \|_{\infty}$.

\paragraph{Douglas--Rachford for the Sparse Inverse Covariance Selection.}
We consider covariance matrices of size $n \times n$ where $n=50$ and $\theta = 0.1$.
The matrix $C$ is generated as $C = V^\top V$ where $V_{i,j} \iidsim \mathcal{N}(0,1)$.

\paragraph{ADMM for Trend Filtering.}
We consider the cyclic 1D Total Variation $n=p=75$ and $\lambda = 3.0$.
Here $\theta \iidsim \mathcal{N}(0,1)$.

\section{Assets used}
\label{app:assets}
Our numerical experiments rely on:
\begin{itemize}
    \item \texttt{numpy}~\cite{harris2020array}, released under BSD-3 license.
    \item \texttt{matplotlib}~\cite{hunter2007matplotlib}, released under PSF license.
    \item \texttt{jax}~\cite{jax2018github}, released under Apache-2.0 license.
\end{itemize}

\end{document}